\DeclareMathOperator*{\var}{var}
\newtheorem{thm}{Theorem}[section]
\newtheorem{prop}[thm]{Proposition}
\theoremstyle{definition}
\theoremstyle{remark}
\theoremstyle{remark}
\theoremstyle{plain}
\newtheorem{lem}[thm]{Lemma}
\theoremstyle{plain}
\newtheorem{Property}[thm]{Property}
\begin{document}

\title[Anderson polymer in fractional environment]{Anderson polymer in a fractional Brownian environment: asymptotic behavior of the partition function}

\author[K. Kalbasi]{Kamran Kalbasi}
\author[T. Mountford]{Thomas S. Mountford}
\author[F. Viens]{Frederi G. Viens}

\keywords{{A}nderson polymer,
fractional {B}rownian motion,
parabolic {A}nderson model,
asymptotic behavior,
{L}yapunov exponents,
{M}alliavin calculus}

\begin{abstract}
We consider the Anderson polymer partition function
$$
u(t):=\mathbb{E}^X\Bigl[e^{\int_0^t \mathrm{d}B^{X(s)}_s}\Bigr]\,,
$$
where $\{B^{x}_t\,;\, t\geq0\}_{x\in\mathds{Z}^d}$ is a family of independent fractional Brownian motions all with Hurst parameter $H\in(0,1)$, and $\{X(t)\}_{t\in \mathds{R}^{\geq 0}}$ is a continuous-time simple symmetric random walk on $\mathds{Z}^d$ with jump rate $\kappa$ and started from the origin.  $\mathbb{E}^X$ is the expectation with respect to this random walk.

We prove that when $H\leq 1/2$, the function $u(t)$ almost surely grows asymptotically like $e^{\lambda t}$, where $\lambda>0$ is a deterministic number. More precisely, we show that as $t$ approaches $+\infty$, the expression $\{\frac{1}{t}\log u(t)\}_{t\in \mathds{R}^{>0}}$ converges both almost surely and in the $\mathcal{L}^1$ sense to some positive deterministic number $\lambda$.

For $H>1/2$,  we first show that $\lim_{t\rightarrow \infty} \frac{1}{t}\log u(t)$ exists both almost surely and in the $\mathcal{L}^1$ sense, and equals a strictly positive deterministic number (possibly $+\infty$); hence almost surely $u(t)$ grows asymptotically at least like $e^{\alpha t}$ for some deterministic constant $\alpha>0$. On the other hand, we also show that almost surely and in the $\mathcal{L}^1$ sense, $\limsup_{t\rightarrow \infty} \frac{1}{t\sqrt{\log t}}\log u(t)$ is a deterministic finite real number (possibly zero), hence proving that almost surely $u(t)$ grows asymptotically at most like $e^{\beta t\sqrt{\log t}}$ for some deterministic positive constant $\beta$.

Finally, for $H>1/2$ when $\mathds{Z}^d$ is replaced by a circle endowed with a H\"older continuous covariance function, we show that $\limsup_{t\rightarrow \infty} \frac{1}{t}\log u(t)$ is a deterministic finite positive real number, hence proving that almost surely $u(t)$ grows asymptotically at most like $e^{c t}$ for some deterministic positive constant $c$.

\end{abstract}

\maketitle

\section*{Introduction}

Let $(\Omega^X, \mathcal{F}^X, (\mathcal{F}^X_t)_{t\geq 0}, \mathrm{P}^X)$ be a complete filtered probability space with $\mathrm{P}^X$ being the probability law of a simple symmetric random walk on $\mathds{Z}^d$ indexed by $t\in \mathds{R}^{\geq 0}$ and started at the origin. We denote the jump rate of the random walk by $\kappa$ , the corresponding expectation by $\mathbb{E}^X$ and a random walk sample path by $X(\cdot)$. We also denote by $\mathcal{D}_T$ the space of right-continuous paths $X:[0,T]\rightarrow \mathds{Z}^d$ that have only a finite number of jumps all of which having length one. Let also $\{B^{x}_t\,;\, t\geq0\}_{x\in\mathds{Z}^d}$ be a family of stochastic processes indexed by $\mathds{Z}^d$ and independent of the random walk.\\
The \textit{Anderson polymer model} (with inverse temperature $\beta$) \cite{CranstonComets} is the Gibbs measure on $\mathcal{D}_T$ defined by (if it exists)
\begin{equation}\label{Gibbs measure definition}
\mu^X_{\kappa,\beta,T}(\mathcal{A})=\frac{1}{u(T)}\mathbb{E}^X
\bigl[e^{\beta\int_0^T \mathrm{d}B_s^{X(s)}}\mathbf{1}_{\{X(\cdot)\in\mathcal{A}\}}\bigr] \quad \text{for any event }\mathcal{A}\in\mathcal{D}_T
\end{equation}
where
$$
u(T):=\mathbb{E}^X\Bigl[e^{\beta\int_0^T \mathrm{d}B^{X(s)}_s}\Bigr]\,.
$$
Here the stochastic integral is nothing more than a summation. Indeed, suppose $n$ is the number of jumps of the random walk $X(\cdot)$, the sequence $\{t_i\}_{i=1}^{n}$ are the jump times of $X(\cdot)$ in the time interval $(0,T)$, and define $t_0:=0$ and $t_{n+1}:=T$. Also for each $i$ let $x_i$ be the value of $X(\cdot)$ in the time interval $[t_i,t_{i+1})$. Then we have
$$
\int_0^T \mathrm{d}B^{X(s)}_s=\sum_{i=0}^{n}\bigl(B_{t_{i+1}}^{x_i}-B_{t_{i}}^{x_i}\bigr)\,.
$$
The function $u(\cdot)$ is called the \textit{partition function} of the polymer.

$u(\cdot)$ is also related to the solution of the parabolic Anderson model which is described by the following equation
\begin{equation}\label{general PAM}
    \left\{
        \begin{aligned}
        & \frac{\partial}{\partial t}u(t,x)= \kappa \boldsymbol{\Delta} u(t,x)+ \xi(t,x)\,u(t,x),
\qquad x\in\mathds{Z}^d\;,\; t\geq 0\\
        &u(0,x)=u_o(x)\,,
        \end{aligned}
    \right.
\end{equation}
where $\kappa>0$ is a diffusion constant and $\boldsymbol{\Delta}$ is the discrete Laplacian defined by
${\boldsymbol{\Delta} f(x):= \frac{1}{2d}\sum_{|y-x|=1}\bigl(f(y)-f(x)\bigr)}$. The potential $\{\xi(t,x)\}_{t,x}$ can be a random or deterministic field or even a Schwartz distribution. For more on the parabolic Anderson model we refer to the classical work of Carmona and Molchanov \cite{Carmona}, as well as the surveys  \cite{GaertnerKoenig, cranston, koenig16}.

By \cite{Carmona}, if for every $y\in\mathds{Z}^d$ the function $\{\xi(t,y)\}_{t}$ is locally integrable in $t$, and if the following Feynman-Kac formula is finite, then it actually solves Equation \eqref{general PAM}
\begin{equation}\label{Feynman-Kac}
u(t,x)=\mathbb{E}^X\Bigl[u_o(X(t))e^{ \int_0^t \xi(s, X(t-s))\mathrm{d}s}\Bigr]=\mathbb{E}^X\Bigl[u_o(X(t))e^{ \int_0^t \mathrm{d}B_s^{ X(t-s)}}\Bigr]\,,
\end{equation}
where $B_t^y:=\int_0^t \xi(s,y)\mathrm{d}s$, and $X(t)$ is a simple symmetric random walk on $\mathds{Z}^d$, with jump rate $\kappa$, started at $x \in \mathds{Z}^d$ and independent of the family $\{\xi(t,y)\}_{t,y}$.

The parabolic Anderson model (PAM) has been extensively studied both when the potential $\xi(t,x)$ is a real-valued field (see e.g. \cite{EHM15,GHM12,koenig16} and the references therein) and when it is a white Gaussian noise which is a distributional-valued field (see e.g. \cite{Carmona,BorodinCorwin} and their cited references). On the contrary, very little is known on the PAM driven by distributional-valued potentials other than the white Gaussian noise.
We study the asymptotic behavior of $u(\cdot)$ when the potential $\xi(t,x)$ is a fractional noise by which we mean the corresponding $\{B^{x}_t\,;\, t\geq0\}_{x\in\mathds{Z}^d}$ is a family of independent fractional Brownian motions all with the same Hurst parameter $H\in(0,1)$.
Fractional Brownian motion (fBm) as a generalization of Brownian motion, is widely used to incorporate long-range spatial or temporal correlations. Many phenomena in physics, biology, economy and telecommunications show long range memory (see e.g. \cite{Qian} and references therein).

When the parabolic Anderson model is driven by fractional (or white) noise which is the \textit{formal derivative} of fractional (or standard) Brownian motion, the parabolic Anderson equation is formulated in the following mild sense
\begin{equation}\label{e1}
    \left\{
        \begin{aligned}
        & u(t,x)-u(0,x)=\kappa \int_0^t \boldsymbol{\Delta} u(s,x)\,\mathrm{d}s+\int_0^t u(s,x)\, \mathrm{d}B_s^x\\
        &u(0,x)=u_o(x)
        \end{aligned}
    \right.
\end{equation}
where $\{B^{x}_t\,;\, t\geq0\}_{x\in\mathds{Z}^d}$ is a family of independent fractional Brownian motions all with Hurst parameter $H\in(0,1)$, and the stochastic integral is of the Stratonovich type \cite{Carmona,Kamran}.

It has been shown that the Feynman-Kac formula \eqref{Feynman-Kac} solves Equation \eqref{e1} when it is driven by white noise \cite{Carmona} and when driven by fractional noise of any arbitrary Hurst parameter \cite{Kamran}.

The asymptotic behavior of $u(\cdot)$ has been studied in \cite{Carmona} and \cite{Mountford} for the case of Brownian motion i.e. $H=1/2$. It has been demonstrated \cite{Carmona,Mountford} that almost surely, $\frac{1}{t} \log u(t)$ converges to some deterministic positive constant $\lambda$ which is called the Lyapunov exponent of $u(\cdot)$. These proofs make use of subadditivity properties and independent increments of the Brownian motion which no longer apply to the general case of $H\in(0,1)$.

In \cite{Viens} the parabolic Anderson model on a circle ($\mathcal{S}^1$) with Riemann-Liouville fractional Brownian environment was considered. For $H\leq 1/2$, assuming quite strong conditions on $H$, $\kappa$, and the spatial covariance, it proves that $\{\frac{1}{n}\log u(n)\}_{n\in \mathds{N}}$ converges to some deterministic positive number. For $H>1/2$, it tries to show that $\log u(t)$ grows asymptotically faster than $\frac{t^{2H}}{\log t}$, which is in contrast with our results in Section \ref{Compact Case} where we show that in the compact-space setup (for example a circle), $\log u(t)$ grows linearly for $H>1/2$ as well.

In this paper we consider
\begin{equation}\label{u}
u(t):=\mathbb{E}^X\Bigl[e^{\int_0^t \mathrm{d}B^{X(s)}_s}\Bigr]\,,
\end{equation}
where $\{B^{x}_t\,;\, t\geq0\}_{x\in\mathds{Z}^d}$ is a family of independent fractional Brownian motions all with Hurst parameter $H\in(0,1)$. It should be noted that $\beta$ in Equation \eqref{Gibbs measure definition} plays no role in our arguments, hence for the sake of simplicity we take $\beta$ equal to $1$. Let also
\begin{equation}
U(t):=\mathbb{E} \log u(t)\,,
\end{equation}
where ``$\mathbb{E}$'' denotes expectation with respect to the fractional Brownian motion field.\\
Although we assume that the fractional Brownian motions associated to different sites of $\mathds{Z}^d$ are independent, our results remain valid for much more general spatial covariance structures.

We summarize the results of this paper by the following two theorems.
\begin{thm} With the above notations, we have:\\
\textbf{I. } For $H\leq 1/2$, the limit of $\{\frac{1}{t}\log u(t)\}_{t\in \mathds{R}^{>0}}$, as $t$ approaches $\infty$, exists both almost surely and in the $\mathcal{L}^1$ sense, and equals some strictly positive finite deterministic number.\\
\textbf{II. }For $H>1/2$, the limit of $\{\frac{1}{t}\log u(t)\}_{t\in \mathds{R}^{>0}}$, as $t$ approaches $\infty$, exists both almost surely and in the $\mathcal{L}^1$ sense, and equals some strictly positive deterministic number (possibly $+\infty$). Moreover, $\limsup_{t\rightarrow \infty} \frac{u(t)}{t\sqrt{\log t}}$ is a finite deterministic number.
\end{thm}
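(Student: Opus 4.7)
The plan is to decompose
\[
\log u(t) = U(t) + \bigl(\log u(t)-U(t)\bigr)
\]
and to handle the two pieces by very different tools: super-additivity for the deterministic part $U(t)/t$, and Gaussian concentration (the Borell--TIS inequality) for the random fluctuation, with the concentration constant estimated via Malliavin calculus applied to $\log u(t)$ viewed as a functional of the fBm field $\{B^x\}_{x\in\mathds{Z}^d}$.

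First I would prove that $U(t)$ is super-additive, so that $\lambda:=\lim_{t\to\infty}U(t)/t$ exists in $(0,+\infty]$ by Fekete's lemma. Applying the Markov property of $X$ at time $t$ and translation invariance of the field, one rewrites $u(t+s)$ as an expectation of the product of two partition-function-like factors, one driven by $B^x|_{[0,t]}$ and the other by shifted increments on $[t,t+s]$. For $H=1/2$ these two factors are independent and super-additivity is immediate after Jensen on $X$; for $H\neq 1/2$ the cross-covariance between past and future increments has a definite sign (negative for $H<1/2$, positive for $H>1/2$), and a Gaussian FKG/correlation inequality lets me conclude $U(t+s)\geq U(t)+U(s)$ in either regime. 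Strict positivity $\lambda>0$ comes from a second-moment estimate of the Paley--Zygmund type together with the explicit computation
\[
\mathbb{E}[u(t)] = \mathbb{E}^X\Bigl[\exp\Bigl(\tfrac{1}{2}\,\mathrm{Var}\bigl(\textstyle\int_0^t \mathrm{d}B^{X(s)}_s\bigr)\Bigr)\Bigr],
\]
which grows at least as $e^{ct}$ when $H\leq 1/2$ and at least as $e^{c t^{2H}}$ when $H>1/2$; the latter is already consistent with $\lambda=+\infty$. Finiteness of $\lambda$ when $H\leq 1/2$ is the Jensen upper bound $U(t)\leq \log\mathbb{E}[u(t)]\leq Ct$, obtained by bounding the Cameron--Martin norm of the occupation field $s\mapsto \delta_{X(s)}$ (which, crucially, is sub-additive in $t$ in this regime).

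Next I would control the fluctuation $\log u(t)-U(t)$. A direct Malliavin differentiation shows that the Cameron--Martin gradient of $\log u(t)$ is representable in terms of the polymer measure $\mu^X_{\kappa,1,t}$ integrated against the fBm covariance operator; bounding its Hilbert norm gives a Lipschitz constant $\sigma(t)$ with $\sigma(t)^2=O(t)$ for $H\leq 1/2$ and $\sigma(t)^2=O(t^{2H})$ for $H>1/2$. Borell--TIS then yields
\[
\mathbb{P}\bigl(|\log u(t)-U(t)|>r\bigr)\leq 2\exp\bigl(-r^2/(2\sigma(t)^2)\bigr).
\]
For Part I this makes $\mathbb{P}(|\log u(n)-U(n)|>\varepsilon n)$ summable for every $\varepsilon>0$, and a short-time moment bound on $u(n+h)/u(n)$ interpolates to continuous $t$, giving a.s.\ and $\mathcal{L}^1$ convergence of $\log u(t)/t$ to the finite positive $\lambda$ (uniform integrability follows from the sub-Gaussian tails). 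For Part II the choice $r=\varepsilon t\sqrt{\log t}$ is Borel--Cantelli-summable along $t_n=n$ since $n^{2-2H}\log n\gg \log n$ for $H<1$, which both promotes the super-additivity limit to an a.s.\ and $\mathcal{L}^1$ limit of $\log u(t)/t$ (possibly $+\infty$) and delivers the desired $\limsup \log u(t)/(t\sqrt{\log t})<\infty$.

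The step I expect to be the main obstacle is the Malliavin gradient estimate in the regime $H>1/2$: the covariance kernel $|r-s|^{2H-2}$ is positive and singular, so bounding the double integral $\iint \mu^X_{\kappa,1,t}(\cdots)\,|r-s|^{2H-2}\,\mathrm{d}r\mathrm{d}s$ requires handling the diagonal singularity while keeping track of local-time-type functionals of the random walk, to get the correct order $\sigma(t)^2=O(t^{2H})$ rather than something larger. A secondary technical point is the super-additivity of $U$ for $H\neq 1/2$, where the Gaussian correlation inequality must be invoked in exactly the direction compatible with the sign of the covariance in each regime.
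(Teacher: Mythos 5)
Your proposal differs from the paper in two ways that are not merely cosmetic, and both are genuine gaps.

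\textbf{The exact super-additivity claim.} You assert that a Markov-property split of $u(t+s)$ at time $t$ plus a ``Gaussian FKG/correlation inequality'' yields \emph{exact} super-additivity $U(t+s)\geq U(t)+U(s)$ for all $H\neq 1/2$. The paper explicitly denies this: after the Markov split the ``future'' weight $\log \mathbb{E}^{X'}\bigl[e^{\int_t^{t+s}\mathrm{d}B^{X'(r)+y}_r}\bigr]$ is correlated (through the fBm, which is not Markov) with both the Gibbs weight from $[0,t]$ and the starting-point measure $\overline{\mathrm{P}}(y)$, and this cross-dependence does not vanish, nor does it have a fixed sign that a correlation inequality would turn into the desired inequality. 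The paper instead manufactures an auxiliary independent Brownian motion on $[0,n]$, replaces the genuine ``residue'' contribution of the past by a freshly generated one, and shows that the swap costs at most $c_{\kappa,H}\,n^H\sqrt{\log n}$ (the Lipschitz estimate for $Y_n$ in Proposition~\ref{Holder continuity of Y for H smaller than half} combined with Dudley's bound). This leads to \emph{almost} super-additivity and requires the Derriennic--Hachem version of Fekete rather than Fekete itself. Your claim that the same correlation-inequality argument delivers the \emph{same direction} of inequality for both signs of the past--future covariance is unsupported and appears to conflate the Brownian case with the fractional one; it is the central place where the fBm's lack of independent increments must be paid for, and your write-up skips the payment.

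\textbf{The missing upper bound on $U$ for $H>1/2$.} Your concentration estimate controls only $|\log u(t)-U(t)|$; it says nothing about the size of $U(t)$ itself. For $H\leq 1/2$ you correctly get $U(t)\leq \log\mathbb{E}[u(t)]\leq Ct$ from Jensen plus sub-additivity of the occupation-field variance. But for $H>1/2$ the same Jensen bound gives $U(t)\leq O(t^{2H})$, which is far weaker than what the theorem asserts: to conclude $\limsup_t \log u(t)/(t\sqrt{\log t})<\infty$ you would need $U(n)=O(n\sqrt{\log n})$, and this is precisely the nontrivial Theorem~\ref{theorem on the upperbound for U} in the paper, proved by decomposing the integral $\int_0^n\mathrm{d}B^{X(s)}_s$ into a residue part (controlled via the same Lipschitz/Dudley machinery) plus independent innovation increments over unit subintervals. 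Nothing in your proposal produces that bound; the ``Borel--Cantelli with $r=\varepsilon t\sqrt{\log t}$'' step is orthogonal to it.

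A third, smaller issue: for $H<1/2$ the Malliavin gradient of $\log u(n)$ is \emph{not} almost surely bounded, because the variance of $\int_0^n\mathrm{d}B^{X(s)}_s$ grows like $(N+1)^{1-2H}n^{2H}$ in the number of jumps $N$ of the walk. The paper truncates to walks with at most $\mathfrak{N}_n$ jumps (the $\widehat{u}$/$\widehat{U}$ surgery of Section~\ref{Approximation}) precisely so that $\|\nabla\log\widehat{u}(n)\|_\mathcal{H}$ is a.s.\ bounded by a deterministic constant of order $n^H$, which is what Theorem~\ref{Ustunel's thm} requires. Your fluctuation bound $\sigma(t)^2=O(t)$ for $H\leq 1/2$ implicitly assumes this truncation without carrying it out, and one must then also show that the truncation does not alter the asymptotics, which the paper does in the first half of Section~\ref{Approximation}.
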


As a by-product we also prove the next theorem that provides a time-linear upper bound on the Anderson polymer partition function in the set-up of \cite{Viens}. Setting the stage, let $\{B^{x}_t\,;\, t\geq0\}_{x\in\mathds{R}}$ be a family of fractional Brownian motions of Hurst parameter $H\in(0,1)$ with the following covariance structure
$$
\mathbb{E}\bigl(B_t^x B_s^y\bigr)=R_H(t,s) \, Q(x,y)\,,
$$
where $Q:\mathds{R}\times\mathds{R}\rightarrow \mathds{R}$ is a positive semi-definite function $2\pi$-periodic in both coordinates, and $R_H(t,s)$ is the fBm covariance function, i.e.
$$
R_H(t,s):=\frac{1}{2}(|t|^{2H}+|s|^{2H}-|t-s|^{2H})\,.
$$
Let $u_c(\cdot)$ be defined as follows
\begin{equation}\label{u-c}
u_c(t):=\mathbb{E}^X\Bigl[e^{\int_0^t \mathrm{d}B^{X(s)}_s}\Bigr]\,,
\end{equation}
where  $X(\cdot)$ is a symmetric random walk on $\mathds{R}$ with unit jumps and started from the origin, or equivalently a simple symmetric random walk on $\mathds{Z}$ started from zero.\\
Note that this set-up is equivalent to the Anderson model over $\mathcal{S}^1$ (unit circle). Then we have
\begin{thm}
Suppose $Q(\cdot,\cdot)$ is H\"older continuous of order $\alpha>0$, in the sense that there exist positive constants $C$ and $\alpha$ such that
$$
|Q(x,y)-\frac{1}{2}Q(x,x)-\frac{1}{2}Q(y,y)|\leq C |x-y|^\alpha \quad \text{for every }x,y\in \mathds{R}\,.
$$
Then there exists a deterministic constant $0<\lambda<\infty$ such that almost surely $u_c(t)\leq e^{\lambda t}$ for $t$ sufficiently large.
\end{thm}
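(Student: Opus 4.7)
The plan is to combine a Gaussian concentration inequality for $\log u_c(t)$ with a linear upper bound on its expectation.

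For concentration, observe that for any random walk path $X\in\mathcal{D}_t$, the exponent $I(X):=\int_0^t dB^{X(s)}_s$ is a centered Gaussian under the fBm measure with variance
$$\sigma^2(X)=H(2H-1)\int_0^t\!\!\int_0^t Q(X(s),X(r))\,|s-r|^{2H-2}\,ds\,dr\le Mt^{2H},$$
where $M:=\sup_{x}Q(x,x)$ is finite by $2\pi$-periodicity of $Q$. Viewing $\log u_c(t)$ as a functional of the Gaussian field $B=\{B^x_\cdot\}$, its Malliavin derivative is
$$D\log u_c(t)=u_c(t)^{-1}\mathbb{E}^X\!\left[e^{I(X)}\,DI(X)\right],$$
whose Cameron-Martin norm is bounded by $\sup_X\|DI(X)\|_{CM}=\sup_X\sigma(X)\le\sqrt{M}\,t^H$. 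The Borell-Tsirelson-Ibragimov-Sudakov inequality then gives
$$\mathbb{P}\bigl(\bigl|\log u_c(t)-\mathbb{E}\log u_c(t)\bigr|>r\bigr)\le 2\exp\!\bigl(-r^2/(2Mt^{2H})\bigr),$$
and a Borel-Cantelli argument along integer times with a straightforward interpolation across $[n,n+1]$ yields $\log u_c(t)=\mathbb{E}\log u_c(t)+o(t)$ almost surely, since $t^H\sqrt{\log t}=o(t)$ for $H<1$.

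Next I would show $\mathbb{E}\log u_c(t)\le\lambda t$ for some finite $\lambda$, via an approximate subadditive inequality. Fix a step $T>0$, write $t=nT$, and decompose $I(X)=\sum_{k=0}^{n-1}I_k(X)$ with $I_k(X):=\int_{kT}^{(k+1)T}dB^{X(s)}_s$. For $H>1/2$, the Volterra representation of each fBm $B^x$ in terms of an underlying standard Brownian motion splits each $I_k(X)$ into an \emph{innovation} independent of the past up to time $kT$, plus a \emph{predictable} term measurable with respect to that past. The Hölder condition on $Q$ together with periodicity ensures that the predictable term has Cameron-Martin norm bounded uniformly in $X\in\mathcal{D}_{(k+1)T}$ and in $k$, with a bound depending on $T$ only. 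Combined with the Markov property of the walk at the partition times $kT$ and Gaussian concentration applied to the predictable contributions, this yields a telescoping inequality $\mathbb{E}\log u_c(nT)\le n\,\mathbb{E}\log u_c(T)+O(n)$, giving the desired finite upper bound.

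The main obstacle is the second stage: controlling the predictable part of the Volterra decomposition uniformly over walk paths. In the non-compact $\mathds{Z}^d$ setting the walk explores a spatial window of size $O(\sqrt{t})$, so the predictable correction picks up an extra $\sqrt{\log t}$ factor, which explains why only the weaker bound $e^{\beta t\sqrt{\log t}}$ is achieved in that case. In the periodic setting the effective spatial range seen by $Q$ is bounded by $2\pi$, and the Hölder regularity prevents the correction from growing with $t$, so the subadditive step closes at a genuinely linear rate. Carefully tracking the Volterra conditioning jointly with uniform Hölder/compactness estimates on $Q$ is the delicate core of the argument.
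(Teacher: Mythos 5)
Your two-stage plan --- Gaussian concentration for $\log u_c(t)$ around its mean, followed by a linear bound on $\mathbb{E}\log u_c(t)$ --- matches the paper's architecture. The concentration step is essentially the argument of Section~\ref{Quenched limits}: the Malliavin derivative of $\log u_c(t)$ is a Gibbs average of $\nabla I(X)$, so $\|\nabla\log u_c(t)\|_{\mathcal{H}}\le\sup_X\sigma(X)\le\sqrt{M}\,t^H$, and Theorem~\ref{Ustunel's thm} plus Borel--Cantelli yields $\log u_c(t)=\mathbb{E}\log u_c(t)+o(t)$ almost surely for $H<1$. Your intuition for why compactness upgrades the $t\sqrt{\log t}$ bound to a linear one is also correct in spirit. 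That part of the proposal is sound and parallels the paper.

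The gap is in how you propose to control the residue (your ``predictable'') part of the Volterra decomposition. You assert that the H\"older condition and periodicity give a uniform bound on the Cameron--Martin norm of the predictable term ``with a bound depending on $T$ only,'' and that this closes the argument. But a pointwise variance bound on the residue, uniform over paths, is not the right object: after replacing the Gibbs average by a worst-case path you are left with an \emph{expectation of a supremum} of a Gaussian field, namely
$$
\mathbb{E}\Bigl(\sup_{\substack{x\in\mathds{R}\\ l\le u\le l+1}}\bigl|Y_l^x(u)\bigr|\Bigr)\,,
$$
and a uniform variance bound does not control the expected supremum of a Gaussian process over an uncountable index set. This requires a metric entropy or chaining argument, which the paper supplies via Dudley's Theorem~\ref{Dudley theorem}. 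That argument needs two ingredients you do not establish: a Lipschitz modulus of $Y_l^x(\cdot)$ in the time variable (the paper's Proposition~\ref{Holder continuity of Y for H smaller than half}), and the $\alpha$-H\"older modulus in the space variable coming from the hypothesis on $Q$. Combined, these make the Dudley entropy integral converge to a constant independent of $l$, so that summing over $l$ gives the linear bound; without the chaining step, even a bounded variance would leave you with the same $\sqrt{\log(\text{cardinality})}$ penalty seen in the $\mathds{Z}^d$ case. Two smaller discrepancies: your telescoping $\mathbb{E}\log u_c(nT)\le n\,\mathbb{E}\log u_c(T)+O(n)$ is not how the paper closes --- consecutive innovation blocks are \emph{not} independent, since the Volterra kernel reaches back one block, and the paper instead applies Jensen ($\mathbb{E}\log\le\log\mathbb{E}$) together with a direct Gaussian moment computation and a factor of $3$ absorbing the one-step overlap. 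Also, the auxiliary Brownian sheets $\widetilde{W}^{l,\cdot}$ in the Volterra surgery must be constructed with the same spatial covariance $Q$ so that the reconstructed field $\widehat{B}^{l,\cdot}$ has the correct law; this is easy to overlook but essential.
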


The organization of this paper is as follows:

In Section \ref{preliminaries} we gather some background material which will be used in the succeeding sections.

In Section \ref{Approximation}, we show that the main contribution to $U(t)$ comes from those random walk occurrences that have at most $\mathfrak{N}_t$ (to be defined there) number of jumps over the time interval $[0,t]$. We denote by $\widehat{U}(t)$ the part of $U(t)$ that comes from this kind of random walk occurrences.
We also show that as far as the asymptotic behavior of $\{\widehat{U}(t)\}_{t \in \mathds{R}^{>0}}$ is concerned we can confine our attention to the integer values only, i.e. when $t\in \mathds{N}$.

In Section \ref{Lipschitz continuity of residues of fBm increments}, we develop a Lipschitz inequality that will serve as a building block for all our subsequent arguments.

In Section \ref{Super-additivity}, we prove an approximate super-additivity for $\widehat{U}(\cdot)$. This would then imply the convergence of $\frac{1}{t}\widehat{U}(t)$ as $t$ goes to infinity.

Section \ref{Quenched limits} is devoted to the quenched asymptotic behavior. In mathematical physics terminology the quenched statements are those statements that are formulated almost surely. Here we seek the almost sure behavior of $\log u(t)$ when $t$ approaches infinity. In this section we show that $\log u(\cdot)$ has the same asymptotic behavior as $\widehat{U}(\cdot)$. In particular we obtain limits over the positive real $t$'s instead of just positive integers.

In Section \ref{Lower Bound}, we establish a strictly positive asymptotic lower bound on $\{\frac{1}{t}\widehat{U}(t)\}_t$, for any $\kappa$ and $H\in(0,1)$. Hence along with the super-additivity result, it shows that $\widehat{U}(t)$ grows in $t$ at least as fast as $\lambda t$ for some strictly positive $\lambda$.

Section \ref{Upper Bounds} deals with finding an asymptotic upper bound on $\{\frac{1}{t}\widehat{U}(t)\}_t$. There, a finite asymptotic upper bound is easily found for the case of $H\leq 1/2$.  For $H>1/2$, we show that $\{\frac{1}{t}\widehat{U}(t)\}_t$ is asymptotically bounded by $Ct\sqrt{\log t}$ for some positive constant $C$.

Finally in Section \ref{Compact Case}, we deal with the compact-space setup of \cite{Viens}. We show that compactness can be utilized to improve our $t\sqrt{\log t}$ upper bound to a linear one.
\section{Preliminaries}\label{preliminaries}
We recall some basic elements that will be used in later sections.

\subsection{Fractional Brownian Motion}
A Gaussian random process $\{B_t\}_{t\in \mathds{R}}$ is called a fractional Brownian motion (fBm) of Hurst parameter $H\in(0,1)$ if it has continuous sample paths and its covariance function is of the following form:
\begin{equation}\label{fractional covariance function}
\mathbb{E}\bigl(B_t B_s\bigr)=R_H(t,s):=\frac{1}{2}(|t|^{2H}+|s|^{2H}-|t-s|^{2H}).
\end{equation}
\begin{Property}\label{positive or negative correlation} For a fractional Brownian motion with Hurst parameter $H$, the increments over disjoint intervals are positively correlated for $H>1/2$, negatively correlated for $H<1/2$, and independent for $H=1/2$ (see e.g. \cite{Mishura}). In other words, for a fractional Brownian motion $\{B_t\}_t$ of Hurst parameter $H$, and for any $s<t\leq u < v$ the covariance $\mathbb{E}\bigl((B_{t}- B_{s})(B_{v}- B_{u})\bigr)$ is positive for $H>1/2$, negative for $H<1/2$, and zero for $H=1/2$.
\end{Property}
\begin{Property}
A fractional Brownian motion $\{B_t\}_t$, of Hurst parameter $H \in (0,1)$, can be represented as a Volterra process \cite{NualartStochasticIntegration}
\begin{equation}\label{Volterra representation}
B_t=\int_0^t K_H(t,s) \mathrm{d}W_s\,,
\end{equation}
where $W_s$ is a standard Brownian motion and the stochastic integral is in the It\=o sense. $K_H(t,s)$ is the square-integrable kernel defined for every $0<s<t$ as follows\\
For $H > 1/2$:
$$
K_H(t,s):=c_H \int_s^t (u-s)^{H-\frac{3}{2}}\,(\frac{u}{s})^{H-\frac{1}{2}} \mathrm{d}u\,,
$$
and for $H \leq 1/2$:
$$
K_H(t,s):=c'_H \Bigl(\bigl(\frac{t}{s}\bigr)^{H-\frac{1}{2}}(t-s)^{H-\frac{1}{2}}
-(H-\frac{1}{2})s^{\frac{1}{2}-H}\int_s^t u^{H-\frac{3}{2}}(u-s)^{H-\frac{1}{2}} \mathrm{d}u\Bigl)\,,
$$
where $c_H$ and $c'_H$ are positive constants that depend only on $H$.\\

For $H < 1/2$, we have the following equality
$$
K_H(t,s)= \lim_{\alpha \downarrow s} \int_{\alpha}^t \frac{\partial K_H}{\partial t}(u,s) \mathrm{d}u + c'_H \bigl(\frac{\alpha}{s}\bigr)^{H-\frac{1}{2}}(\alpha-s)^{H-\frac{1}{2}}.
$$
This shows that for any $H\in(0,1)$ and any $0<s<t_1<t_2$ we have
$$
\begin{aligned}
K_H(t_2,s)-K_H(t_1,s)
&=\int_{t_1}^{t_2} \frac{\partial K_H}{\partial t}(u,s) \mathrm{d}u\\
&=c_H \int_{t_1}^{t_2} (u-s)^{H-\frac{3}{2}}\,(\frac{u}{s})^{H-\frac{1}{2}} \mathrm{d}u
\,,
\end{aligned}
$$
where $c_H:=c'_H (H-\frac{1}{2})$.
\end{Property}

\subsection{Malliavin Calculus}
Let $(\Omega,\mathcal{F},P)$ be a probability space and $\mathbf{G}$ a Gaussian linear space on it. Let also $\mathbf{H}$ be a Hilbert space with the isometry $\textbf{W}: \mathbf{H} \rightarrow \mathbf{G}$.
Define $\mathcal{S}$ as the space of random variables $F$ of the form:
$$
F=f\bigl(\textbf{W}(\varphi_1) , \dots , \textbf{W}(\varphi_n)\bigr)\,,
$$
where $\varphi_i \in \mathbf{H} , f\in C^\infty(\mathds{R}^n)$, $f$ and all its partial derivatives have polynomial growth. The Malliavin derivative of $F$, $\nabla F$, is defined (see e.g. \cite{Janson,Ustunel}) as an $\mathbf{H}$-valued random variable given by
$$
\nabla F:= \sum _ {i=1}^{n} \frac{\partial f}{\partial x_i}(\textbf{W}(\varphi_1)  , ... , \textbf{W}(\varphi_n) ). \varphi_i
$$
The operator $\nabla$ is closable from $L^2(\Omega)$ into $L^2(\Omega ;  \mathbf{H})$ and one defines the Sobolev space $\mathbb{D}^{1,2}$ as the closure of $\mathcal{S}$ with respect to the following norm \cite{Janson}:
$$
\|F\|_{1,2}= \sqrt{\mathbb{E}(F^2)+\mathbb{E}(\|\nabla F\|^2_\mathbf{H})}.
$$
For more on Malliavin calculus we refer to \cite{Janson}.

Let $\{B_t^x\,;\, t\in\mathds{R}\}_{x\in \mathds{Z}^d}$ be a family of independent fractional Brownian motions indexed by $x\in \mathds{Z}^d$ all with Hurst parameter $H$.\\
Let $\mathcal{H}$ be the Hilbert space defined by the completion of the linear span of indicator functions $\mathbf{1}_{[0,t]\times\{x\}}$ for $t \in \mathds{R}$ and $x\in \mathds{Z}^d$ under the scalar product
$$
\langle\mathbf{1}_{[0,t]\times\{x\}} , \mathbf{1}_{[0,s]\times\{y\}}\rangle_\mathcal{H}=R_H(t,s)\,\delta_x(y)\,,
$$
where $\delta$ is the Kronecker delta, and $R_H$ is as in \eqref{fractional covariance function}. For negative $t$ we assume the convention ${\mathbf{1}_{[0,t]\times\{x\}} := -\mathbf{1}_{[t,0]\times\{x\}}}$.\\
The mapping $\textbf{B}(\mathbf{1}_{[0,t]\times\{x\}}):= B_t^x$ can be extended to a linear isometry from $\mathcal{H}$ onto the Gaussian space spanned by $\{B_t^x)\,;\, t\in\mathds{R}, x\in \mathds{Z}^d\}$. This is the only setting to which we will apply Malliavin calculus in this paper.

\subsection{Concentration inequalities}

Let $(\Omega, \mathcal{F}, P)$ be a probability space, $\mathcal{H}$ be a Gaussian Hilbert space on it and $\mathcal{F}(\mathcal{H})$ be the sigma algebra generated by $\mathcal{H}$ \cite{Janson}. The following theorem shows that the probability distribution of a Malliavin derivable random variable with bounded derivative decays exponentially away from its mean value. We will use this theorem in Section \ref{Quenched limits} for establishing the quenched limits.
\begin{thm}[B.8.1 in \cite{Ustunel}]\label{Ustunel's thm}
Suppose that $\varphi \in \mathbb{D}^{1,p}$ for some $p>1$ with $\nabla \phi \in \mathcal{L}^\infty(\Omega ; \mathcal{H})$, i. e. $\|\nabla \varphi\|_\mathcal{H}$ is almost surely bounded. Then we have the following tail probability estimate:
\begin{equation}
P\{\omega\; ; \; |\varphi(\omega)-\mathbb{E}[\varphi]|>c\}\leq 2 \, exp \{\frac{-\,c^2}{\;2\,\|\nabla \varphi\|_{\mathcal{L}^\infty(\Omega ; \mathcal{H})}^2}\}
\end{equation}
\end{thm}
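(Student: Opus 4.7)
The plan is to derive the almost-sure upper bound $u_c(t) \leq e^{\lambda t}$ by combining two ingredients: a linear-in-$t$ upper bound on the mean $\mathbb{E}\log u_c(t)$, and a Gaussian concentration of $\log u_c(t)$ around its mean in the spirit of Section \ref{Quenched limits}.

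For the concentration part, $\log u_c(t)$ is Malliavin-differentiable in the fBm field, with derivative equal to the Gibbs expectation $\mathbb{E}^{\mu_t}[\nabla Z_X]$, where $Z_X := \int_0^t \mathrm{d}B^{X(s)}_s$ and $\mu_t$ is the polymer Gibbs measure on paths. Since $\|\nabla Z_X\|_\mathcal{H}^2 = \mathrm{Var}(Z_X \mid X) =: \sigma_X(t)^2$, and for $H \geq 1/2$ the bound $\sigma_X(t)^2 \leq Q(0,0)\,t^{2H}$ holds uniformly in $X$ (from $|Q(x,y)| \leq Q(0,0)$ together with the nonnegativity of fBm-covariance rectangles, by Property \ref{positive or negative correlation}), we obtain $\|\nabla \log u_c(t)\|_\mathcal{H} \leq \sqrt{Q(0,0)}\,t^H$ almost surely. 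Theorem \ref{Ustunel's thm} then yields $\mathrm{P}(|\log u_c(t) - \mathbb{E}\log u_c(t)| > \varepsilon t) \leq 2\exp(-c\,\varepsilon^2\, t^{2-2H})$, which is summable along integer $t$ for every $H<1$. Borel-Cantelli plus a standard continuity-in-$t$ argument then produces $\log u_c(t) = \mathbb{E}\log u_c(t) + o(t)$ almost surely.

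The essential novelty is therefore the linear bound $\mathbb{E}\log u_c(t) \leq \lambda_0 t$, which refines the $O(t\sqrt{\log t})$ bound from Section \ref{Upper Bounds} to linear order in the compact set-up. Jensen is unhelpful here: for $H > 1/2$, even the constant random-walk path alone forces $\mathbb{E}u_c(t) \geq e^{Q(0,0) t^{2H}/2 \,-\, \kappa t}$, which is super-exponential, so one must work directly on $\log u_c(t)$. My idea is to retrace the argument of Section \ref{Upper Bounds}, whose $\sqrt{\log t}$ factor arises from summing over the many spatial sites the random walk can visit with essentially decorrelated noise, and to replace that sum with a single spatial supremum. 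Compactness enters crucially: the $2\pi$-periodicity of $Q$ forces $B^x = B^{x+2\pi}$ almost surely (the variance of the difference vanishes by periodicity), so the fBm field descends to the compact circle $\mathbb{R}/2\pi\mathbb{Z}$. The Hölder continuity of $Q$, together with Kolmogorov's criterion and a Dudley entropy-integral argument, then furnishes a jointly continuous modification $(x,s)\mapsto B^x_s$ whose supremum over $[0,2\pi]\times[0,t]$ has expectation of order $t^H\sqrt{\log t}$; inserting this uniform-in-$x$ supremum in place of the per-site estimates removes one of the $\sqrt{\log t}$ factors, delivering the desired linear mean bound.

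The main obstacle is the technical execution of the mean bound in the regime $H>1/2$. Positive fBm correlations across disjoint time intervals (Property \ref{positive or negative correlation}) do not vanish, so any time-slicing of $[0,t]$ must carefully absorb the cross-interval contributions, which a priori can produce super-linear growth of order $t^{2H}$. It is precisely the Hölder continuity of $Q$, combined with its boundedness via periodicity, that is needed to keep these cross-terms at $O(t)$ instead. Getting the constants and the choice of slice length to line up so that the resulting linear bound indeed holds uniformly in $t$ is the delicate point.
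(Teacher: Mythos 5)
Your proposal does not address the statement it was supposed to prove. The statement is the Gaussian concentration inequality for Malliavin-differentiable functionals (Theorem B.8.1 of \"Ust\"unel--Zakai, quoted in the paper as Theorem \ref{Ustunel's thm}): if $\varphi\in\mathbb{D}^{1,p}$ and $\|\nabla\varphi\|_{\mathcal{H}}$ is almost surely bounded, then $P\{|\varphi-\mathbb{E}\varphi|>c\}\leq 2\exp\{-c^2/(2\|\nabla\varphi\|_{\mathcal{L}^\infty(\Omega;\mathcal{H})}^2)\}$. What you wrote instead is a strategy for the compact-space result of Section \ref{Compact Case} (the almost-sure bound $u_c(t)\leq e^{\lambda t}$), and in the course of it you explicitly \emph{invoke} Theorem \ref{Ustunel's thm} to get concentration of $\log u_c(t)$ around its mean. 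Relative to the assigned task this is circular: the inequality you were asked to establish is used as a black box, and no argument for it appears anywhere in your text. None of the ingredients you develop (the bound $\|\nabla\log u_c(t)\|_{\mathcal{H}}\leq \sqrt{Q(0,0)}\,t^H$, the linear bound on $\mathbb{E}\log u_c(t)$, Borel--Cantelli) bears on the tail estimate itself, which is a general statement about an arbitrary functional $\varphi$ of a Gaussian field and has nothing to do with the polymer structure.

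For the record, the paper does not reprove this result either: it is cited from \cite{Ustunel}. If you wanted to supply a self-contained proof, the standard routes are (i) the Clark--Ocone representation $\varphi-\mathbb{E}\varphi=\int_0^\infty \mathbb{E}[D_s\varphi\,|\,\mathcal{F}_s]\,\mathrm{d}W_s$ followed by an exponential supermartingale (Novikov-type) bound, using that the integrand is bounded in $\mathcal{H}$-norm by $\|\nabla\varphi\|_{\mathcal{L}^\infty(\Omega;\mathcal{H})}$, together with a Chernoff optimization in the exponent; or (ii) the Herbst argument from the Gaussian logarithmic Sobolev inequality, which gives the same sub-Gaussian tail with the $\mathcal{L}^\infty$ bound on the Malliavin derivative playing the role of the Lipschitz constant. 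Either of these would be an acceptable proof of the statement; your proposal, as written, contains neither and therefore has a genuine gap --- indeed it proves a different theorem while assuming the one in question.
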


Dudley's theorem or Dudley's entropy bound \cite{Lifshits} provides a tight upper bound on the expectation of the maximum of a family of Gaussian random variables.
\begin{thm}[\bfseries Dudley]\label{Dudley theorem}
Let $\{X_t\}_{t\in T}$ be a separable centered Gaussian process indexed by some topological space $T$ and $\rho$ be the pseudo-metric on $T$ defined by $\rho(s,t):=\sqrt{\mathbb{E}(X_t-X_s)^2}$. Then we have
\begin{equation}\label{Dudley}
\mathbb{E}(\sup_{t\in T}\,X_t) \leq K\int_0^\infty \sqrt{\log N(\varepsilon)}\, \mathrm{d}\varepsilon\,,
\end{equation}
where $N(\varepsilon)$ is the minimum number of $\rho$-balls of radius $\varepsilon$ required to cover $T$, and $K$ is a universal positive constant.
\end{thm}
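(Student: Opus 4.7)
The plan is to reduce the theorem, via concentration plus super-additivity, to a linear a priori bound $U_c(t):=\mathbb{E}\log u_c(t)\leq Ct$, and then to establish this linear bound by exploiting compactness through Dudley's theorem. Applying the super-additivity argument of Section~\ref{Super-additivity} to the natural analogue $\widehat{U}_c$ of $\widehat{U}$ yields a limit $\lambda:=\lim_{t\to\infty}\widehat{U}_c(t)/t\in(0,+\infty]$, and adapting the Malliavin-based concentration argument of Section~\ref{Quenched limits} promotes this to $\log u_c(t)/t\to\lambda$ almost surely. The theorem therefore reduces to proving $\lambda<+\infty$; the case $H\leq 1/2$ is essentially covered already by Part~I of the preceding theorem, so the interesting regime is $H>1/2$.

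For the concentration step, the Malliavin derivative of the log partition function satisfies $\|\nabla\log u_c(t)\|_\mathcal{H}^2\leq \sup_X\sigma_X^2(t)$ with $\sigma_X^2(t):=\var(V_t\mid X)$. When $H>1/2$, the non-negativity of the fBm increment covariance matrix combined with $|Q(x,y)|\leq Q_{\max}$ (a consequence of positive semi-definiteness) yields the uniform bound $\sigma_X^2(t)\leq Q_{\max}t^{2H}$, so Theorem~\ref{Ustunel's thm} delivers $P(|\log u_c(t)-U_c(t)|>\varepsilon t)\leq 2\exp\bigl(-\varepsilon^2 t^{\,2-2H}/(2Q_{\max})\bigr)$, which is summable over integer $t$ because $H<1$. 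Borel--Cantelli together with the integer-to-real interpolation of Section~\ref{Quenched limits} then gives $\log u_c(t)/t-U_c(t)/t\to 0$ almost surely. The linear mean bound $U_c(t)\leq Ct$ is where compactness is essential: periodicity of $Q$ combined with positive semi-definiteness gives $B^{x+2\pi}_t=B^x_t$ almost surely, so $\{B^x_t\}$ truly lives on $\mathcal{S}^1$, and the Hölder hypothesis gives the pseudo-metric bound $\mathbb{E}(B^x_t-B^y_t)^2\leq 2Ct^{2H}d_{\mathcal{S}^1}(x,y)^\alpha$. Hence the covering numbers of $\mathcal{S}^1$ under the resulting pseudo-metric are bounded by $c(t^H/\varepsilon)^{2/\alpha}$ with diameter $O(t^H)$, and Theorem~\ref{Dudley theorem} delivers $\mathbb{E}\bigl[\sup_{x\in\mathcal{S}^1}|B^x_t|\bigr]\leq Kt^H$. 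Combined with the Abel-summation identity $V_t=B^{X(t)}_t-\sum_j(B^{x_j}_{t_j}-B^{x_{j-1}}_{t_j})$ and the restriction to walks with at most $\mathfrak{N}_t$ jumps from Section~\ref{Approximation}, this yields $U_c(t)\leq c_1 t^H+c_2 t\leq Ct$ for $t$ sufficiently large, and hence $\lambda<+\infty$.

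The principal obstacle is handling the Abel-sum residual uniformly over the polymer measure: the naive path-wise triangle inequality, applied to the $\mathfrak{N}_t\sim\kappa t$ spatial increments each of variance $O(t^{2H})$ by the Hölder hypothesis, would produce a super-linear $O(t^{1+H})$ bound that defeats the argument. The remedy is to keep the sum inside the $\mathbb{E}^X$-expectation so that, conditionally on $X$, it behaves as a single Gaussian of total variance $\sigma_X^2(t)\leq Q_{\max}t^{2H}$, absorbing the walk-length factor into an overall $O(t^H)$; the decisive role of the compactness of $\mathcal{S}^1$ is precisely that it keeps the entropy of the spatial supremum independent of the range of the random walk, so the estimate does not degrade to the $t\sqrt{\log t}$ rate of Section~\ref{Upper Bounds} when this averaging is carried out.
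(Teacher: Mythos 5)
Your proposal does not prove the statement it was assigned. The statement is Dudley's entropy bound itself: for a separable centered Gaussian process $\{X_t\}_{t\in T}$ with canonical pseudo-metric $\rho(s,t)=\sqrt{\mathbb{E}(X_t-X_s)^2}$, one must show $\mathbb{E}\bigl(\sup_{t\in T}X_t\bigr)\leq K\int_0^\infty\sqrt{\log N(\varepsilon)}\,\mathrm{d}\varepsilon$. This is a classical result about general Gaussian processes (quoted in the paper from the literature), and its standard proof is a chaining argument: one discretizes $T$ at dyadic scales $\varepsilon_j=2^{-j}\operatorname{diam}_\rho(T)$ by $\varepsilon_j$-nets of cardinality $N(\varepsilon_j)$, writes $X_t$ as a telescoping sum of increments between successive net approximations, bounds the expected maximum over each finite collection of centered Gaussians with standard deviation at most $\varepsilon_{j-1}$ by $C\varepsilon_{j-1}\sqrt{\log\bigl(N(\varepsilon_j)N(\varepsilon_{j-1})\bigr)}$, and sums over $j$ to recover the entropy integral, handling separability by first proving the bound for finite subsets and passing to the supremum. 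None of these ingredients appears in your write-up.

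What you wrote instead is a sketch of the compact-space result of Section~\ref{Compact Case} (a linear-in-$t$ bound for the partition function $u_c$ on the circle), and in doing so you explicitly invoke Theorem~\ref{Dudley theorem} as a tool ("Theorem~\ref{Dudley theorem} delivers $\mathbb{E}[\sup_x|B^x_t|]\leq Kt^H$"). Relative to the assigned statement this is circular: you cannot use Dudley's entropy bound as an input to a proof of Dudley's entropy bound. Whatever the merits of your argument for the polymer upper bound, it establishes nothing about the inequality \eqref{Dudley} for an arbitrary separable centered Gaussian process on an arbitrary index set, so the proof of the assigned theorem is entirely missing. To repair this you should discard the polymer material and give (or cite and reproduce) the chaining proof outlined above.
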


Borell's inequality \cite{ConcentrationMeasureLedoux} shows that under very weak conditions, the maximum of a family of Gaussian random variables concentrates around its mean, and away from its mean its probability tails decay exponentially.
\begin{thm}[\bfseries Borell's inequality]\label{Borell}
Let $T$ be a topological space and $\{X_t\}_{t\in T}$ be a separable centered Gaussian process with $\sup_{t\in T}\,X_t\;<\; \infty$ almost surely. Then \cite{ConcentrationMeasureLedoux} the expectation
$\mathbb{E}(\sup_{t\in T}\,X_t)$ is finite and for any $c>0$
$$
\mathbb{P}\Big( |\sup_{t\in T}\,X_t-\mathbb{E}(\sup_{t\in T}\,X_t)|\geq \lambda \Bigl)\leq 2e^{-\frac{\lambda^2}{2\sigma_T^2}}\,,
$$
where $\sigma_T^2:=\sup_{t\in T} \mathbb{E}(X_t^2)$.
\end{thm}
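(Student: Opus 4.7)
The plan is to reduce the almost-sure upper bound on $\log u_c(t)$ to a Gaussian-supremum estimate, then exploit compactness of the circle and H\"older continuity of $Q$ to make the entropy integral in Dudley's bound grow only linearly in $t$.

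First, following the truncation machinery of Section \ref{Approximation}, I would restrict the random walk expectation defining $u_c(t)$ to paths with at most $\mathfrak{N}_t := K t$ jumps for a large fixed $K$; the Poisson tail of the jump count makes the discarded mass super-exponentially small, so $\log u_c(t) \leq \log \widehat{u}_c(t) + o(t)$ almost surely. Next, use the elementary bound $\mathbb{E}^X[f] \leq \sup f$ (valid because $\mathbb{E}^X$ is a probability average) to obtain
$$\log \widehat{u}_c(t) \;\leq\; \sup_{X:\, n(X) \leq \mathfrak{N}_t}\; Y^X(t),$$
where $Y^X(t) := \int_0^t \mathrm{d}B_s^{X(s)}$ is, for each fixed path $X$, a centered Gaussian random variable. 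This replaces the quenched upper bound problem by a Gaussian-supremum problem amenable to Borell's and Dudley's theorems.

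Apply Borell's inequality (Theorem \ref{Borell}) to this supremum. The sup-variance $\sigma_t^2 := \sup_X \mathrm{Var}(Y^X(t))$ is controlled by expanding $Y^X(t)$ as a sum of fBm increments $\Delta_i := B_{t_{i+1}}^{x_i} - B_{t_i}^{x_i}$, giving $\mathrm{Var}(Y^X(t)) = \sum_{i,j} Q(x_i,x_j)\,\mathrm{Cov}(\Delta_i,\Delta_j)$; bounding $|Q|$ by its supremum (finite because $Q$ is continuous and $2\pi$-periodic) and invoking Property \ref{positive or negative correlation} on the signs of fBm increment correlations yields $\sigma_t^2 \leq C\, t^{2H}$ uniformly in $X$. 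For the mean, apply Dudley's theorem (Theorem \ref{Dudley theorem}) to the truncated path space under the pseudo-metric $\rho(X,X') := \sqrt{\mathbb{E}(Y^X(t)-Y^{X'}(t))^2}$; the diameter is $O(t^H)$, and H\"older continuity of $Q$ ensures that two random walk paths which agree on a coarse-graining of the circle (into $O(\varepsilon^{-1/\alpha})$ arcs, say) produce Gaussian-close values of $Y^X$, so the Dudley integral $\int_0^{C t^H}\sqrt{\log N(\varepsilon)}\,\mathrm{d}\varepsilon$ comes out $O(t)$. Combining via Borell's concentration gives $P\big(\sup_X Y^X(t) > \lambda_0 t + \epsilon t\big) \leq 2\exp(-c\epsilon^2 t^{2-2H})$, summable along integers since $H<1$, and Borel--Cantelli (together with a Lipschitz-type interpolation to pass from integer to real $t$, as in Section \ref{Quenched limits}) yields the almost sure bound $\log u_c(t) \leq (\lambda_0+\epsilon) t$ for $t$ sufficiently large.

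The main obstacle is the entropy estimate: without H\"older continuity and periodicity of $Q$, the same strategy on $\mathds{Z}^d$ only yields the $t\sqrt{\log t}$ bound of Section \ref{Upper Bounds}, because independent fBms at distinct sites force covering numbers that are exponential in the number of visited sites. The improvement in the compact case comes entirely from the reduction in the number of Gaussian-distinguishable paths, made possible by combining compactness of the state space with H\"older continuity of $Q$; making the coarse-graining argument precise, in particular showing that rounding jump positions modulo $2\pi$ and slightly perturbing jump times costs only a controlled $\rho$-distance via the H\"older estimate on $Q$, is the technically substantive step.
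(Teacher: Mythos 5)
Your proposal does not prove the statement it was meant to address. The statement is Theorem \ref{Borell} itself, i.e.\ Borell's concentration inequality: that for a separable centered Gaussian process with almost surely finite supremum, $\mathbb{E}(\sup_{t\in T}X_t)$ is finite and the supremum has two-sided Gaussian tails around its mean with variance proxy $\sigma_T^2=\sup_{t\in T}\mathbb{E}(X_t^2)$. This is a classical result which the paper quotes from the literature without proof. What you have written instead is a sketch of the compact-space upper bound of Section \ref{Compact Case} (a linear-in-$t$ almost sure bound for $\log u_c(t)$ on the circle), and in that sketch you invoke Borell's inequality and Dudley's theorem as black boxes. Relative to the assigned task this is circular: the inequality to be proved is used as an ingredient, and nothing in your argument addresses either of its two assertions --- why almost sure finiteness of $\sup_{t\in T}X_t$ forces $\mathbb{E}(\sup_{t\in T}X_t)<\infty$, or why the deviation probability is bounded by $2e^{-\lambda^2/(2\sigma_T^2)}$ with the sup-variance (and not, say, a larger quantity) in the exponent.

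A proof of Theorem \ref{Borell} runs along entirely different lines: by separability one reduces to finite index sets $T_n\uparrow T$; for finite $T_n$ the map taking the underlying Gaussian vector to $\max_{t\in T_n}X_t$ is Lipschitz with constant $\sigma_{T_n}\leq\sigma_T$ with respect to the Cameron--Martin (standard Gaussian) metric, so the Gaussian isoperimetric inequality of Borell and Sudakov--Tsirelson (equivalently, Gaussian concentration of Lipschitz functionals) gives concentration of $\max_{t\in T_n}X_t$ around its median; almost sure finiteness of $\sup_{t\in T}X_t$ yields a level $a$ with $\mathbb{P}(\sup_{t\in T}X_t\leq a)\geq 3/4$, hence medians of the finite maxima are bounded by $a$ uniformly in $n$, which gives uniform Gaussian tails, integrability, and finiteness of $\mathbb{E}(\sup_{t\in T}X_t)$ by monotone convergence; finally one converts concentration around the median into concentration around the mean. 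None of these steps appears in your write-up. (As an aside, even read as an attempt at the Section \ref{Compact Case} theorem your sketch diverges from the paper, which bounds $U(n)=\mathbb{E}\log u_c(n)$ through the innovation/residue decomposition of the fBm increments rather than by dominating $\log u_c(t)$ by a single Gaussian supremum over truncated paths; in the latter approach the covering numbers still carry the factor coming from the up-to-$Kt$ jump times, so the entropy control you assert would itself need substantial justification.)
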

\section{Constraining the Number of Jumps and Quantization}\label{Approximation}
In this section we show that as far as the asymptotic behavior is concerned we can confine our attention to only those random walk occurrences that have a specified maximum number of jumps.

For any positive real number $t$ we define $\mathfrak{N}_t$ as follows
\begin{equation}\label{N_t}
\mathfrak{N}_t:=
\begin{cases}
\lfloor t^2\rfloor & \quad \text{for } H>1/2\\
\lfloor \rho \kappa t\rfloor & \quad \text{for } H\leq1/2\,,
\end{cases}
\end{equation}
where $\rho:=\max\{e^6, \kappa^{-1}\}$, and $\lfloor x \rfloor$ denotes the floor of $x$, i.e. the largest integer not greater than $x$.

For $t>0$, let $\mathcal{A}_t$ be the event that the number of jumps of the random walk in the time interval $[0,t]$ is less than or equal to $\mathfrak{N}_t$, and define $\widehat{U}(t)$ as follows
$$
\widehat{U}(t):=\mathbb{E} \log \mathbb{E}^X\bigl[e^{ \int_0^t \mathrm{d}B^{X(s)}_s} \mathbf{1}_{\mathcal{A}_t}\bigr]\,.
$$

We have the following proposition:

\begin{prop}
For any function $f:\mathds{R}^{>0}\rightarrow \mathds{R}^{>0}$ which satisfies $\alpha|s-t|\leq|f(s)-f(t)|\leq \beta|s-t|^p$ for some fixed positive numbers $\alpha$, $\beta$, and $p$, we have
$$
\limsup_{t\rightarrow \infty} \frac{U(t)}{f(t)}
=\limsup_{\substack{n\rightarrow \infty \\ n \in \mathds{N}}}\frac{\widehat{U}(n)}{f(n)}\,,
$$
and
$$
\liminf_{t\rightarrow \infty} \frac{U(t)}{f(t)}
=\liminf_{\substack{n\rightarrow \infty \\ n \in \mathds{N}}}\frac{\widehat{U}(n)}{f(n)}\,.
$$
\end{prop}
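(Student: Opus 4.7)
The plan is a sandwich argument. One direction is immediate: $\mathbf{1}_{\mathcal{A}_t}\leq 1$ implies $\widehat{u}(t)\leq u(t)$ (writing $\widehat{u}(t):=\mathbb{E}^X[e^{\int_0^t \mathrm{d}B^{X(s)}_s}\mathbf{1}_{\mathcal{A}_t}]$), hence $\widehat{U}(t)\leq U(t)$; restricting to integer $t=n$ gives
\[
\limsup_{n\to\infty,\,n\in\mathds{N}}\frac{\widehat{U}(n)}{f(n)}\;\leq\;\limsup_{n\to\infty,\,n\in\mathds{N}}\frac{U(n)}{f(n)}\;\leq\;\limsup_{t\to\infty}\frac{U(t)}{f(t)},
\]
and analogously for $\liminf$. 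The substance lies in the reverse direction, which I would split into two claims: (i) $U(t)-\widehat{U}(t)=o(f(t))$, and (ii) $U$ is locally regular enough that $|U(t)-U(\lfloor t\rfloor)|=o(f(t))$. Together with the lower bound $f(t)\geq \alpha t\to\infty$ (obtained from $\alpha|s-t|\leq|f(s)-f(t)|$ by fixing $s$ small and letting $t$ grow, noting $f>0$), these two estimates yield $\limsup_t U(t)/f(t)\leq \limsup_{n\in\mathds{N}}\widehat{U}(n)/f(n)$, closing the sandwich; the $\liminf$ case is symmetric.

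The core estimate (i) rests on the decomposition $u(t)=\widehat{u}(t)+u_{\mathrm{tail}}(t)$, where $u_{\mathrm{tail}}(t):=\mathbb{E}^X[e^{Z(t,X)}\mathbf{1}_{\mathcal{A}_t^c}]$ and $Z(t,X):=\int_0^t \mathrm{d}B^{X(s)}_s$. From $\log(a+b)\leq \log 2+\max(\log a,\log b)$ one obtains
\[
U(t)-\widehat{U}(t)\;\leq\;\log 2+\mathbb{E}\bigl[(\log u_{\mathrm{tail}}(t)-\log\widehat{u}(t))_+\bigr].
\]
A pointwise lower bound $\widehat{u}(t)\geq e^{-\kappa t}e^{B^0_t}$ comes from restricting the $\mathbb{E}^X$ to the constant walk $X\equiv 0$ (whose probability is $e^{-\kappa t}$), while an upper bound $u_{\mathrm{tail}}(t)\leq \sqrt{\mathbb{P}^X(\mathcal{A}_t^c)}\,\sqrt{\mathbb{E}^X[e^{2Z(t,X)}]}$ follows from Cauchy-Schwarz on $\mathbb{E}^X$. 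The threshold $\mathfrak{N}_t$ in \eqref{N_t} is calibrated so that a Chernoff/Bennett tail bound on the Poisson number of jumps gives $\mathbb{P}^X(\mathcal{A}_t^c)$ a decay rate in $t$ that beats the growth of the Gaussian moment $\mathbb{E}^X[e^{2Z(t,X)}]$, whose annealed mean is controlled by $\sigma^2(X,t)\leq t$ for $H\leq 1/2$ (negative correlation of disjoint fBm increments) and by a suitable function of $t$ and the number of jumps for $H>1/2$ (positive correlation). Splitting the positive part according to whether $u_{\mathrm{tail}}(t)/\widehat{u}(t)\leq 1$ and applying Markov to control the bad event, followed by Cauchy-Schwarz on its complement, produces $U(t)-\widehat{U}(t)=O(1)$, hence $o(f(t))$.

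For (ii), compare $\log u(t)$ with $\log u(\lfloor t\rfloor)$ by conditioning on the random walk up to the integer time $\lfloor t\rfloor$ and invoking the Markov property: the difference is the logarithm of a weighted average of Gaussian exponentials driven by fresh fBm increments over the short interval $[\lfloor t\rfloor,t]$, whose expectation is uniformly bounded in $t$. This yields $|U(t)-U(\lfloor t\rfloor)|\leq C$, and combined with $|f(\lfloor t\rfloor)-f(t)|\leq \beta$ (so $f(\lfloor t\rfloor)/f(t)\to 1$ since $f(t)\geq\alpha t$) gives the sought comparison. The main obstacle is the delicate balance in (i) when $H>1/2$: there $\sigma^2(X,t)$ is not uniformly bounded by $t$ and can grow with the number of jumps because of the positive correlation between disjoint fBm increments, so the choice $\mathfrak{N}_t=\lfloor t^2\rfloor$ is made precisely so that the superexponential Poisson tail at that threshold dominates the worst-case Gaussian moment growth; making this quantitative is the technical heart of the argument.
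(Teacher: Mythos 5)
Your overall plan mirrors the paper's: a sandwich in two steps, first comparing $U$ with $\widehat U$, then comparing continuous with integer time. Your step (i) is essentially sound; the paper gets it more directly via $\log(1+a)\le a$ followed by one Cauchy--Schwarz rather than the $\log 2+\max$ decomposition and a positive-part analysis, but either route produces the needed $O(1)$ (the paper in fact gets $O(e^{-t})$). However, you have the two $H$-regimes partially swapped in (i): for $H\le 1/2$, negative correlation makes the no-revisit path worst, giving $\var(\mathcal{S}_X^t)\le (N+1)^{1-2H}t^{2H}$, which is not uniformly $\le t$ on $\mathcal{A}_t^c$ but does grow with the jump count $N$; for $H>1/2$, positive correlation gives $\var(\mathcal{S}_X^t)\le t^{2H}$ uniformly, independently of the number of jumps. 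The paper controls the $\mathcal{A}_t^c$ piece by trading variance growth against the super-exponential Poisson tail. Also, the chain of inequalities you call immediate only works for $\limsup$; restricting to $n\in\mathds{N}$ gives $\liminf_n U(n)/f(n)\ge\liminf_t U(t)/f(t)$, so the $\liminf$ version is not analogous and both directions there need the quantitative comparisons.

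The genuine gap is in step (ii). You compare $\log u(t)$ with $\log u(\lfloor t\rfloor)$ directly by conditioning on the walk up to time $n=\lfloor t\rfloor$ and invoking ``fresh fBm increments over $[n,t]$, whose expectation is uniformly bounded in $t$.'' That argument is specific to $H=1/2$ and fails here for two independent reasons. First, for $H\ne 1/2$ the fBm increments $\{B^x_s-B^x_n:s\in[n,t]\}$ are not independent of the past, so the polymer weight on $X(n)$ and the future factors $\psi_t(x):=\mathbb{E}^X\bigl[e^{\int_n^t \mathrm{d}B^{X(s)}_s}\mid X(n)=x\bigr]$ are correlated; you cannot separate $\mathbb{E}\bigl[\mathbb{E}_{\mu_n}\psi_t\bigr]$ into a product of a polymer average and a Gaussian moment, which is the only route to a uniform bound. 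Second, even granting independence, $X(n)$ ranges over all of $\mathds{Z}^d$, so the relevant bound would require controlling a supremum over infinitely many Gaussian increments, which has no finite expectation. The paper sidesteps both problems by comparing $\widehat U(t)$ with $\widehat U(n)$ and $\widehat U(n+1)$: the truncation to $\mathcal{A}_n$ (resp.\ $\mathcal{A}_t$) confines the walk to $|x|\le\mathfrak N_n$ (resp.\ $\mathfrak N_t$), and then the comparison uses only the deterministic pointwise bound $\Delta B^{X(n)}_{n,t}\ge\min_{|x|\le\mathfrak N_n}\Delta B^x_{n,t}$ (no independence of increments needed) plus the elementary estimate $\mathbb{E}\max_{i\le m}\xi_i\le\sigma\sqrt{2\log m}$ for centered Gaussians, yielding the $O(\sqrt{\log n})$ error in \eqref{quantization sandwich inequality}. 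Your step (ii) should therefore be run on $\widehat U$ rather than $U$, which is exactly what the jump constraint in $\mathcal{A}_t$ is designed to make possible.
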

\begin{proof}
We show in the first part that $\frac{U}{f}$ and $\frac{\widehat{U}}{f}$ are asymptotically very close, and then in the second part we show that the asymptotic behavior of $\frac{\widehat{U}}{f}$ over integers is the same as over real numbers.

\textbf{First part: } We should examine how close $U(t)$ is to $\widehat{U}(t)$. Define $\mathcal{S}_X^t:=\int_0^t \mathrm{d}B^{X(s)}_s$. Using the inequality $log(1+a)\leq a$ and then Cauchy-Schwarz we have
$$
\begin{aligned}
U(t)-\widehat{U}(t)&=\mathbb{E} \log \Bigl(1 + \frac{\mathbb{E}^X \bigl[ e^{\mathcal{S}_X^t} \mathbf{1}_{\mathcal{A}_t^c}\bigr]}{\mathbb{E}^X \bigl[ e^{\mathcal{S}_X^t} \mathbf{1}_{\mathcal{A}_t}\bigr]}\Bigr)\\
&\leq \mathbb{E}\Bigl(\frac{\mathbb{E}^X \bigl[ e^{\mathcal{S}_X^t} \mathbf{1}_{\mathcal{A}_t^c}\bigr]}{\mathbb{E}^X \bigl[ e^{\mathcal{S}_X^t} \mathbf{1}_{\mathcal{A}_t}\bigr]}\Bigr)\\
&\leq \sqrt{\mathbb{E}\Bigl(\mathbb{E}^X \bigl[ e^{\mathcal{S}_X^t} \mathbf{1}_{\mathcal{A}_t^c}\bigr]\Bigr)^2}\sqrt{\mathbb{E}\Bigl(\mathbb{E}^X \bigl[ e^{\mathcal{S}_X^t} \mathbf{1}_{\mathcal{A}_t}\bigr]\Bigr)^{-2}}\,,
\end{aligned}
$$
where $\mathcal{A}_t^c$ is the complement of $\mathcal{A}_t$.\\
As $x^{-2}$ is convex and $\mathcal{S}_X^t$ is Gaussian , we have
$$
\mathbb{E}\Bigl(\mathbb{E}^X \bigl[ e^{\mathcal{S}_X^t} \mathbf{1}_{\mathcal{A}_t}\bigr]\Bigr)^{-2}
\leq p_{_{\mathcal{A}_t}} ^{-3} \mathbb{E}\,\mathbb{E}^X \bigl[e^{-2\mathcal{S}_X^t}  \mathbf{1}_{\mathcal{A}_t}\bigr]
\leq p_{_{\mathcal{A}_t}} ^{-3} \mathbb{E}^X \bigl[e^{2 var(\mathcal{S}_X^t)}  \mathbf{1}_{\mathcal{A}_t}\bigr]\,,
$$
where $p_{_{\mathcal{A}_t}}$ is the probability of $\mathcal{A}$, and $var(\cdot)$ denotes variance with respect to the environment, i.e. the fractional Brownian motion field.\\
For the other term, again by Cauchy-Schwarz we have
$$
\mathbb{E}\Bigl(\mathbb{E}^X \bigl[ e^{\mathcal{S}_X^t} \mathbf{1}_{\mathcal{A}_t^c}\bigr]\Bigr)^2 \leq p_{_{\mathcal{A}_t^c}} \,\mathbb{E}\,\mathbb{E}^X \bigl[ e^{2\mathcal{S}_X^t} \mathbf{1}_{\mathcal{A}_t^c}\bigr]
\leq p_{_{\mathcal{A}_t^c}} \, \mathbb{E}^X \bigl[e^{2 var(\mathcal{S}_X^t)}  \mathbf{1}_{\mathcal{A}_t^c}\bigr]
\,,
$$
where $p_{_{\mathcal{A}_t^c}}$ is the probability of $\mathcal{A}_t^c$. So we have
\begin{equation}\label{Difference of U(t)-Û(t)}
0 \leq U(t)-\widehat{U}(t) \leq \, p_{_{\mathcal{A}_t}} ^{-3/2} \, p_{_{\mathcal{A}_t^c}}^{1/2} \, \mathbb{E}^X \bigl[e^{2 var(\mathcal{S}_X^t)}  \mathbf{1}_{\mathcal{A}_t^c}\bigr]  \, \mathbb{E}^X \bigl[e^{2 var(\mathcal{S}_X^t)}  \mathbf{1}_{\mathcal{A}_t}\bigr]
\end{equation}

i) For $H>1/2$: In this case as the increments of the fBm are positively correlated (property \ref{positive or negative correlation}), the maximum variance is achieved when the random walk stays on a single site and never moves away. So we have
$$
var(\mathcal{S}_X^t)\leq t^{2H}\,.
$$
So
$$
\mathbb{E}^X \bigl[e^{2 var(\mathcal{S}_X^t)}  \mathbf{1}_{\mathcal{A}_t}\bigr] \leq
p_{_{\mathcal{A}_t}} e^{2t^{2H}} \qquad \text{and} \qquad
\mathbb{E}^X \bigl[e^{2 var(\mathcal{S}_X^t)}  \mathbf{1}_{\mathcal{A}_t^c}\bigr] \leq
p_{_{\mathcal{A}_t^c}} e^{2t^{2H}}\,
$$
hence by \eqref{Difference of U(t)-Û(t)}, we have
$$
U(t)-\widehat{U}(t)\leq p_{_{\mathcal{A}_t}} ^{-1} p_{_{\mathcal{A}_t^c}} e^{2t^{2H}}.
$$
Let $N$ denote the number of jumps of the random walk $X(\cdot)$ in the time interval $[0,t]$. Evidently $N$ has a Poisson distribution with mean $\kappa t$. But for a general Poisson random variable $N$ with mean $\lambda$ we have the following tail probability bound \cite{Mitzenmacher}
\begin{equation}\label{poisson tail probability bound}
P(N\geq n)\leq e^{-\lambda}(\frac{e \lambda}{n})^n \qquad \text{for } n> \lambda\,.
\end{equation}
Using this bound, for $t\geq \kappa e^2$ we have
$$
p_{_{\mathcal{A}_t^c}} \leq e^{-\kappa t} (\frac{e \kappa t}{t^2})^{t^2}
\leq e^{-\kappa t} e^{- t^2},
$$
which implies $p_{_{\mathcal{A}_t}} \geq 1/2$. Hence
\begin{equation}\label{distance of U and U hat for H greater than half}
0\leq U(t)-\widehat{U}(t) \leq 2 e^{- t^2}e^{2t^{2H}}\sim \mathcal{O}(e^{-t}).
\end{equation}

ii) For $H\leq 1/2$: Let $N$ be the number of jumps of the random walk $X(\cdot)$ in the time interval $[0,t]$, and let $\{t_i\}_{i=1}^N$ be its jump times. We moreover define $t_0:=0$ and $t_{N+1}:=t$. As the increments of the fBm are negatively correlated in this case (property \ref{positive or negative correlation}), the maximum variance is achieved if the random walk never visits any site more than once. Hence we have
$$
var(\mathcal{S}_X^t)\leq \sum_{i=0}^{N} (t_{i+1}-t_i)^{2H}\leq (N+1) (\frac{t}{N+1})^{2H},
$$
where we have used the concavity of the function $x^{2H}$ and the H\"older's inequality.
So
$$
\begin{aligned}
\mathbb{E}^X \bigl[e^{2 var(\mathcal{S}_X^t)}  \mathbf{1}_{\mathcal{A}_t}\bigr] &\leq
\mathbb{E}^X \bigl[e^{2 (N+1)^{1-2H} t^{2H}}  \mathbf{1}_{\mathcal{A}_t}\bigr]
\leq \mathbb{E}^X \bigl[e^{2 (\rho \kappa t)^{1-2H} t^{2H}}  \mathbf{1}_{\mathcal{A}_t}\bigr]\\
&  \leq e^{2 (\rho \kappa)^{1-2H} t} p_{_{\mathcal{A}_t}}\,,
\end{aligned}
$$
and
$$
\begin{aligned}
\mathbb{E}^X \bigl[e^{2 var(\mathcal{S}_X^t)}  \mathbf{1}_{\mathcal{A}_t^c}\bigr] &\leq
\mathbb{E}^X \bigl[e^{2 (N+1) (\frac{t}{N+1})^{2H}}  \mathbf{1}_{\mathcal{A}_t^c}\bigr]
 \leq \mathbb{E}^X \bigl[e^{2 (N+1) (\rho \kappa)^{-2H}}  \mathbf{1}_{\mathcal{A}_t^c}\bigr] \\
&\leq \mathbb{E}^X \bigl[e^{2 (N+1)}  \mathbf{1}_{\mathcal{A}_t^c}\bigr]
=e^{-\kappa t}\sum_{n\geq \lfloor \rho \kappa t\rfloor} \frac{(\kappa t)^n}{n!} e^{2n}\leq e^{-\kappa t}e^{e^2\kappa t},
\end{aligned}
$$
where we have used the fact that $\rho \kappa \geq 1$.\\
Finally using $\rho \geq e^6$ and Poisson tail probability bound \eqref{poisson tail probability bound} we have
$$
p_{_{\mathcal{A}_t^c}} \leq e^{-\kappa t} (\frac{e \kappa t}{\rho \kappa t})^{\rho \kappa t}
\leq e^{-\kappa t} e^{- 5 \rho \kappa t},
$$
which also implies $p_{_{\mathcal{A}_t}} \geq 31/32$.\\
Hence by \eqref{Difference of U(t)-Û(t)} we have
\begin{equation}\label{distance of U and U hat for H less than half}
\begin{aligned}
0\leq U(t)-\widehat{U}(t) & \leq  (31/32)^{-1} \exp\{(\rho \kappa)^{1-2H}t- \kappa t/2+e^2 \kappa t/2-5\rho \kappa t/2\} \\
&\sim \mathcal{O}(e^{-t})\,,
\end{aligned}
\end{equation}
where we have used $\rho \geq e^6$ and $\rho \kappa \geq 1$.

So in any case and using $\frac{1}{f(t)}\sim \mathcal{O}(1)$ we have the following inequality
\begin{equation}\label{U is very close to U-hat}
\frac{\widehat{U}(t)}{f(t)}\leq \frac{U(t)}{f(t)} \leq \frac{\widehat{U}(t)}{f(t)}+\mathcal{O}(e^{-t})\,.
\end{equation}

\textbf{Second part: } We would like to show that by constraining ourselves to the integers we do not lose any information on the asymptotic behavior of $\frac{\widehat{U}}{f}$.\\
For any $0<t_1<t_2$ define $\mathcal{C}_{t_1,t_2}$ to be the event that the random walk has no jump on the interval $(t_1,t_2]$. Let $n:=\lfloor t \rfloor$, and for any $x \in \mathds{Z}^d$ denote $\Delta B_{n,t}^x:=B^{x}_t-B^{x}_n$. We have
$$
\begin{aligned}
\widehat{u}(t):= \mathbb{E}^X\bigl[e^{ \int_0^t \mathrm{d}B^{X(s)}_s} \mathbf{1}_{\mathcal{A}_t} \bigr]
&\geq
\mathbb{E}^X\bigl[e^{ \int_0^n \mathrm{d}B^{X(s)}_s} \mathbf{1}_{\mathcal{A}_n} \mathbf{1}_{\mathcal{C}_{n,t}} e^{\int_n^t \mathrm{d}B^{X(s)}_s}\bigr]\\
&=
\mathbb{E}^X\bigl[e^{ \int_0^n \mathrm{d}B^{X(s)}_s} \mathbf{1}_{\mathcal{A}_n} \mathbf{1}_{\mathcal{C}_{n,t}} e^{{\Delta B_{n,t}^{X(n)}}}\bigr]\\
&\geq
\mathbb{E}^X\bigl[e^{ \int_0^n \mathrm{d}B^{X(s)}_s} \mathbf{1}_{\mathcal{A}_n} \mathbf{1}_{\mathcal{C}_{n,t}} \bigr]\,e^{\min_{|x| \leq \mathfrak{N}_n}\Delta B_{n,t}^{x}}\\
&=
\mathbb{E}^X\bigl[e^{ \int_0^n \mathrm{d}B^{X(s)}_s} \mathbf{1}_{\mathcal{A}_n}\bigr] \,  \mathrm{P}^X( \mathcal{C}_{n,t}) \,e^{\min_{|x| \leq \mathfrak{N}_n}\Delta B_{n,t}^{x}}\,.
\end{aligned}
$$
So we have
$$
\widehat{U}(t) = \mathbb{E} \log \widehat{u}(t) \geq \widehat{U}(n)-\kappa(t-n)+ \mathbb{E}\min_{|x| \leq \mathfrak{N}_n}\Delta B_{n,t}^{x}\,.
$$
By elementary probability one can show that expected-value of the maximum of $m$ centered Gaussian random variables is bounded by $\sigma \sqrt{2 \log m}$ where $\sigma^2$ is the maximum of their variances \cite{Lifshits}. As $var(\Delta B_{n,t}^{x})$ is bounded by $1$ for every $x$, we have
$$
\mathbb{E}\max_{|x| \leq \mathfrak{N}_n}\Delta B_{n,t}^{x} \leq \sqrt{2\log \bigl((2\mathfrak{N}_n)^d\bigr)}.
$$
So we have
$$
\widehat{U}(t)\geq \widehat{U}(n)-K_1 \sqrt{\log (n)}\,,
$$
where $K_1$ is a positive constant that only depends on $\kappa$ and $d$ but not on any of the other variables.

We can similarly show that
$$
\widehat{U}(n+1)\geq  \widehat{U}(t)-K_2 \sqrt{\log (t)}\,.
$$
So we have
\begin{equation}\label{quantization sandwich inequality}
\widehat{U}(n)-K \sqrt{\log (n)} \leq  \widehat{U}(t) \leq \widehat{U}(n+1)+K \sqrt{\log (t)}\,.
\end{equation}
Hence the proposition follows from inequalities \eqref{quantization sandwich inequality} and \eqref{U is very close to U-hat}.
\end{proof}
\section{Lipschitz Continuity of Residues of fBm Increments}\label{Lipschitz continuity of residues of fBm increments}

In this section we consider the following stochastic process defined for every $u > n$
$$
Y_n(u):=\int_0^{n} (u-s)^{H-\frac{3}{2}}(\frac{u}{s})^{H-\frac{1}{2}} \, \mathrm{d}W_s\,,
$$
and establish its Lipschitz continuity. This will play a vital role in the succeeding sections. Indeed for $n\in \mathds{N}^{\geq 1}$ and $n+1\leq t_1 < t_2 $ we have
\begin{equation}\label{Fractional Increment Decomposition for Liptschitz chapter}
B_{t_2}-B_{t_1}=\int_0^{n}\Bigl(K_H(t_2,s)-K_H(t_1,s)\Bigr)\mathrm{d}W_s + Z_{n,t_2} \,,
\end{equation}
where $Z_{n,t_2}$ is measurable with respect to the sigma field generated by \\
$\{{W_s-W_n \;;\; s\in [n,t_2]}\}$.\\
Applying the stochastic Fubini's theorem \cite{Protter} to the first right hand side term of \eqref{Fractional Increment Decomposition for Liptschitz chapter} we get
$$
\begin{aligned}
\int_0^{n}\Bigl(K_H(t_2,s)-K_H(t_1,s)\Bigr)\mathrm{d}W_s &=\int_0^{n}\int_{t_1}^{t_2} (u-s)^{H-\frac{3}{2}}(\frac{u}{s})^{H-\frac{1}{2}} \, \mathrm{d}u\; \mathrm{d}W_s\\
&=\int_{t_1}^{t_2}  Y_n(u)\; \mathrm{d}u\,.
\end{aligned}
$$
For $k, n\in \mathds{N}^{\geq 1}$ and $u \in [n+k,n+k+1]$ we define the process $Y_{n,k}$ as $Y_{n,k}(u):=Y_n(u)$.\\
We denote by $\asymp$ and $\curlyeqprec$ respectively, equality and inequality up to a positive constant that only possibly depends on $H$.\\
\begin{prop}\label{Holder continuity of Y for H smaller than half}
Let $k, n\in \mathds{N}^{\geq 1}$ and $u, v \in [n+k,n+k+1]$. Then
\begin{equation}\label{Lipschitz inequality}
\mathbb{E}\bigl[\bigl( Y_{n,k}(u)-Y_{n,k}(v)\bigr)^2\bigr] \curlyeqprec (1+\frac{k}{n})^{2H-1} k^{2H-4}\,(u-v)^2,
\end{equation}
and
\begin{equation}\label{variance bound}
\mathbb{E}\bigl[\bigl(Y_{n,k}(u)\bigr)^2\bigr] \curlyeqprec (1+\frac{k}{n})^{2H-1} k^{2H-2}\,.
\end{equation}
\end{prop}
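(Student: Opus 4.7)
The plan is to apply It\^o's isometry to convert both expectations into deterministic integrals of the form $\int_0^n (u-s)^{\alpha} s^{1-2H}\,\mathrm{d}s$, and then bound these by splitting the integration region $[0,n]$ at $s=n/2$. This separates the two scales of the integrand: on $[0,n/2]$ one has $u-s\asymp u$, while on $[n/2,n]$ one has the much sharper lower bound $u-s\geq u-n\geq k$, which is exactly what produces the factor $k^{2H-2}$ (respectively $k^{2H-4}$) in the stated bounds.

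\textbf{Variance bound.} By It\^o's isometry,
\begin{equation*}
\mathbb{E}\bigl[(Y_{n,k}(u))^2\bigr] = u^{2H-1}\int_0^n (u-s)^{2H-3}\,s^{1-2H}\,\mathrm{d}s.
\end{equation*}
On $[n/2,n]$ I would use $s^{1-2H}\asymp n^{1-2H}$ together with the substitution $r=u-s$; since $2H-3<0$, the resulting $r$-integral is dominated by the value at its lower limit $u-n\geq k$, yielding $\int_{n/2}^{n}(u-s)^{2H-3}s^{1-2H}\,\mathrm{d}s \curlyeqprec n^{1-2H}k^{2H-2}$. On $[0,n/2]$, $u\geq n$ gives $u-s\geq u/2$, so $(u-s)^{2H-3}\curlyeqprec u^{2H-3}$ and $\int_0^{n/2}s^{1-2H}\,\mathrm{d}s\asymp n^{2-2H}$. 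Combining,
\begin{equation*}
\mathbb{E}\bigl[(Y_{n,k}(u))^2\bigr]\curlyeqprec u^{2H-1}n^{1-2H}k^{2H-2} + u^{4H-4}n^{2-2H}.
\end{equation*}
The first term equals $(u/n)^{2H-1}k^{2H-2}\asymp (1+k/n)^{2H-1}k^{2H-2}$, matching the target; a short case analysis based on $u\asymp n+k$ (separately for $k\leq n$ and $k\geq n$) shows that the second summand is dominated by the first in both regimes, giving (\ref{variance bound}).

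\textbf{Lipschitz inequality.} Differentiating $K(r,s):=(r-s)^{H-3/2}(r/s)^{H-1/2}$ in $r$ and factoring, one obtains
\begin{equation*}
\partial_r K(r,s)=(r/s)^{H-1/2}(r-s)^{H-5/2}\cdot\frac{(2H-2)r-(H-\tfrac12)s}{r}.
\end{equation*}
Since $0<s\leq n<r$, the last fraction is bounded in absolute value by a constant depending only on $H$, so $|\partial_r K(r,s)|\curlyeqprec (r/s)^{H-1/2}(r-s)^{H-5/2}$. Writing $Y_{n,k}(u)-Y_{n,k}(v)=\int_v^u \bigl(\int_0^n \partial_r K(r,s)\,\mathrm{d}W_s\bigr)\,\mathrm{d}r$ via stochastic Fubini, the Cauchy--Schwarz inequality and It\^o's isometry give
\begin{equation*}
\mathbb{E}\bigl[(Y_{n,k}(u)-Y_{n,k}(v))^2\bigr]\leq (u-v)\int_v^u\!\int_0^n \bigl(\partial_r K(r,s)\bigr)^2\,\mathrm{d}s\,\mathrm{d}r.
\end{equation*}
The inner $s$-integral has exactly the shape of the variance integral but with $2H-5$ in place of $2H-3$; the same split at $s=n/2$ now produces $\int_{n/2}^{n}(r-s)^{2H-5}\,\mathrm{d}s\curlyeqprec k^{2H-4}$, and the analogous case analysis absorbs the small-$s$ contribution. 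This gives $\int_0^n (\partial_r K(r,s))^2\,\mathrm{d}s\curlyeqprec (1+k/n)^{2H-1}k^{2H-4}$ uniformly in $r\in[n+k,n+k+1]$, after which the outer $\mathrm{d}r$ integration supplies the remaining factor $u-v$ and (\ref{Lipschitz inequality}) follows.

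The main obstacle is purely computational bookkeeping: one must verify that the numerator $(2H-2)r-(H-\tfrac12)s$ is of order $r$ uniformly in the admissible range (so that the constant in $|\partial_r K|\curlyeqprec (r/s)^{H-1/2}(r-s)^{H-5/2}$ depends only on $H$), and, in both the variance and Lipschitz integrals, that the ``small-$s$'' contribution $u^{4H-2\ell}n^{2-2H}$ (with $\ell=2,3$ respectively) is dominated by the ``near-$n$'' term in every regime of $H$ and of $k$ versus $n$. No step is delicate, but the decomposition of the range of $u-s$ into its $O(u)$ and $O(k)$ parts is the single device driving the entire proof.
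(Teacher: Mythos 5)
Your proof is correct, and for the Lipschitz bound \eqref{Lipschitz inequality} it takes a genuinely different and arguably cleaner route than the paper. The paper treats $\mathbb{E}[(Y_{n,k}(u)-Y_{n,k}(v))^2]=\int_0^n\bigl(K(u,s)-K(v,s)\bigr)^2\,\mathrm{d}s$ by adding and subtracting $(v-s)^{H-3/2}(u/s)^{H-1/2}$ to split the integrand into two pieces $I_1,I_2$, then applies the mean-value-theorem inequality $|(u-s)^{H-3/2}-(v-s)^{H-3/2}|\curlyeqprec(v-u)(u-s)^{H-5/2}$ (and the analogue for the $(u/s)^{H-1/2}$ factor) to each piece separately, before finally splitting each of $I_1,I_2$ at $s=n/2$. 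You instead differentiate the full kernel $K(r,s)$ in $r$, observe that $|\partial_r K(r,s)|\curlyeqprec(r/s)^{H-1/2}(r-s)^{H-5/2}$ since $(2H-2)-(H-\tfrac12)(s/r)$ is bounded uniformly in $0<s<r$, and then represent $Y_{n,k}(u)-Y_{n,k}(v)$ as $\int_v^u\bigl(\int_0^n\partial_r K(r,s)\,\mathrm{d}W_s\bigr)\,\mathrm{d}r$ via stochastic Fubini, closing with Cauchy--Schwarz and It\^o isometry. This unifies the two factors of $K$ in one stroke and reduces the Lipschitz estimate to exactly the same kind of $s$-integral as the variance bound, just with exponent $2H-5$ in place of $2H-3$; the split at $s=n/2$ and the elementary algebra $(n+k)^{2H-5}n\leq(n+k)^{2H-4}\leq k^{2H-4}$ (and $(n+k)^{2H-3}n\leq k^{2H-2}$ for the variance) then finish the job, with no case distinction actually required despite your cautionary remark. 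The variance bound argument is the same as the paper's.
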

\begin{proof}
Without loss of generality we may assume that $u\leq v$. Using the It\=o isometry for stochastic integrals \cite{Protter} we have
$$
\begin{aligned}
\mathbb{E}\bigl[\bigl(Y_{n,k}(u)-Y_{n,k}(v)\bigr)^2\bigr] &=\int_0^{n} \Bigl((u-s)^{H-\frac{3}{2}}(\frac{u}{s})^{H-\frac{1}{2}}-
(v-s)^{H-\frac{3}{2}}(\frac{v}{s})^{H-\frac{1}{2}}\Bigr)^2 \, \mathrm{d}s\\
& \leq 2(I_1 +I_2)\,,
\end{aligned}
$$
where
$$
I_1:=\int_0^{n}(\frac{u}{s})^{2H-1}\Bigl((u-s)^{H-\frac{3}{2}}-
(v-s)^{H-\frac{3}{2}}\Bigr)^2 \, \mathrm{d}s\,,
$$
and
$$
I_2:=\int_0^{n} (v-s)^{2H-3}\Bigl((\frac{u}{s})^{H-\frac{1}{2}}-
(\frac{v}{s})^{H-\frac{1}{2}}\Bigr)^2 \, \mathrm{d}s\,.
$$
We furthermore break $I_1$ and $I_2$ into integrals over $[0,\frac{n}{2}]$ and $[\frac{n}{2},n]$ so that $I_1=I_{1a}+I_{1b}$ and $I_2=I_{2a}+I_{2b}$, and we will bound these terms.

Using the following inequality
\begin{equation}\label{Bound on the difference of (u-s)^(3/2)}
|(u-s)^{H-\frac{3}{2}}-
(v-s)^{H-\frac{3}{2}}|\curlyeqprec \frac{v-u}{(u-s)^{\frac{5}{2}-H}}
\end{equation}
which holds for $s<u\leq v$, we have
$$
I_{1b}\curlyeqprec (u-v)^2\,\int_{\frac{n}{2}}^{n}(\frac{u}{s})^{2H-1}(u-s)^{2H-5}\, \mathrm{d}s,
$$
and
$$
I_{1a}\curlyeqprec (u-v)^2 \, u^{2H-1}\,\int_{0}^{\frac{n}{2}}\frac{1}{s^{2H-1}(u-s)^{5-2H}}\, \mathrm{d}s\,.
$$

When $\frac{n}{2}<s$ and $u<n+k+1$, for $H>1/2$ we have
$$
(\frac{u}{s})^{2H-1}\leq (\frac{n+k+1}{n/2})^{2H-1} \curlyeqprec (1+\frac{k}{n})^{2H-1}\,
$$
and for $H\leq 1/2$ we have
$$
(\frac{u}{s})^{2H-1}\leq (\frac{n+k}{n})^{2H-1}\,.
$$
So in any case we have
$$
\begin{aligned}
I_{1b} &\curlyeqprec
(u-v)^2 \,(1+\frac{k}{n})^{2H-1}\int_{\frac{n}{2}}^{n}(u-s)^{2H-5}\, \mathrm{d}s \\
&\curlyeqprec (u-v)^2 (1+\frac{k}{n})^{2H-1} k^{2H-4}.
\end{aligned}
$$

Using $u^{2H-1} \curlyeqprec (n+k)^{2H-1}$ and the inequality $u-s \geq k+n/2 \curlyeqsucc k+n$,  which holds for $s<\frac{n}{2}$, we have
$$
\begin{aligned}
I_{1a}
&\curlyeqprec (u-v)^2 \, (n+k)^{2H-1}(n+k)^{2H-5}\,\int_{0}^{\frac{n}{2}} s^{1-2H}\, \mathrm{d}s\\
&= (u-v)^2 \, (n+k)^{4H-6}\, n^{2-2H} \leq  (u-v)^2 \, (n+k)^{2H-4} \\
&\leq (u-v)^2 \, k^{2H-4}.
\end{aligned}
$$

For $I_{2a}$, we apply $|u^{H-\frac{1}{2}}-
v^{H-\frac{1}{2}}|\curlyeqprec (v-u)u^{H-\frac{3}{2}}$ and notice that for $s\leq n/2$ we have$v-s \geq k+n/2 \curlyeqsucc k+n$. We obtain
$$
\begin{aligned}
I_{2a}
&\curlyeqprec (n+k)^{2H-3}\int_{0}^{\frac{n}{2}}
\Bigl((\frac{u}{s})^{H-\frac{1}{2}}-
(\frac{v}{s})^{H-\frac{1}{2}}\Bigr)^2 \, \mathrm{d}s\\
&\curlyeqprec (n+k)^{2H-3} (u-v)^2 u^{2H-3}\int_{0}^{\frac{n}{2}}
s^{1-2H}\, \mathrm{d}s\\
&\curlyeqprec (u-v)^2 (n+k)^{4H-6} n^{2-2H}\,.
\end{aligned}
$$
So
$$
I_{2a}\curlyeqprec (u-v)^2 (n+k)^{4H-6} (n+k)^{2-2H}\leq (u-v)^2 k^{2H-4}\,.
$$

Similarly for $I_{2b}$ we have
$$
I_{2b}
\curlyeqprec
(u-v)^2 u^{2H-3}\int_{\frac{n}{2}}^{n}s^{1-2H}(v-s)^{2H-3}\,\mathrm{d}s\,.
$$
It can be seen that
$$
\int_{\frac{n}{2}}^{n}s^{1-2H}(v-s)^{2H-3}\,\mathrm{d}s \curlyeqprec  n^{1-2H} \int_{\frac{n}{2}}^{n}(v-s)^{2H-3}\,\mathrm{d}s \curlyeqprec n^{1-2H} k^{2H-2}\,.
$$
So we get
$$
I_{2b} \curlyeqprec (u-v)^2 (1+\frac{k}{n})^{2H-1} k^{2H-4}\,.
$$

Finally for the variance bound \eqref{variance bound}, we similarly have
$$
\begin{aligned}
\mathbb{E}\bigl[\bigl(Y_{n,k}(u)\bigr)^2\bigr]=\int_0^{n} (\frac{u}{s})^{2H-1}(u-s)^{2H-3} \, \mathrm{d}s= \int_0^{\frac{n}{2}} \cdots + \int_{\frac{n}{2}}^{n} \cdots:= J_1+J_2\,,
\end{aligned}
$$
We have
$$
\begin{aligned}
J_1
&\curlyeqprec (n+k)^{2H-3} \int_0^{n/2} (\frac{u}{s})^{2H-1} \, \mathrm{d}s
\curlyeqprec (n+k)^{4H-4} \int_0^{n/2} s^{1-2H} \, \mathrm{d}s\\
& \curlyeqprec (n+k)^{4H-4} n^{2-2H} \curlyeqprec (1+\frac{k}{n})^{2H-2} k^{2H-2}
\end{aligned}
$$
and
$$
J_2
\curlyeqprec (1+\frac{k}{n})^{2H-1} \int_{n/2}^{n} (u-s)^{2H-3} \, \mathrm{d}s\\
\curlyeqprec (1+\frac{k}{n})^{2H-1} k^{2H-2}\,.
$$

\end{proof}
\section{Super-additivity}\label{Super-additivity}
In this section we show that $\{\widehat{U}(n)\}_{n \in \mathds{N}}$ which is not super-additive in the classical sense, still possesses a kind of approximate super-additivity that guarantees the convergence of $\{\frac{\widehat{U}(n)}{n}\}_{n \in \mathds{N}}$.

\begin{thm}\label{limit on integers}
The sequence $\{\frac{\widehat{U}(n)}{n}\}_{n \in \mathds{N}}$ converges to some positive extended real number $\lambda \in [0,+\infty]$.
\end{thm}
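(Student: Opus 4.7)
The plan is to establish an approximate super-additivity for $\{\widehat{U}(n)\}_{n \in \mathds{N}}$ and then conclude via a version of Fekete's lemma. The principal difficulty, absent in the white-noise case of \cite{Carmona,Mountford}, is that fBm increments over disjoint time intervals are correlated when $H \neq 1/2$, so the classical argument based on independence fails.

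I would first lower-bound $\widehat{u}(n+m)$ by restricting the random walk to the event $\mathcal{A}_n \cap \tilde{\mathcal{A}}_m$, where $\tilde{\mathcal{A}}_m$ denotes the event that $X(\cdot)$ has at most $\mathfrak{N}_m$ jumps on $[n, n+m]$; a direct check from \eqref{N_t} gives $\mathcal{A}_n \cap \tilde{\mathcal{A}}_m \subseteq \mathcal{A}_{n+m}$. By the Markov property of $X(\cdot)$ at time $n$, the integrand decouples into a part supported on $X_1 := X|_{[0,n]}$ and a part on $X_2 := X|_{[n,n+m]}$. Next, using the Volterra representation \eqref{Volterra representation}, for each site $y \in \mathds{Z}^d$ and $t \geq n$ I would write
$$
B^y_t - B^y_n = A^y_n(t) + F^y_n(t), \qquad A^y_n(t):=\int_0^n \bigl(K_H(t,s)-K_H(n,s)\bigr)\mathrm{d}W^y_s,
$$
where $A^y_n$ is $\mathcal{F}_n := \sigma(B^y_s: y\in\mathds{Z}^d, s\leq n)$-measurable (and, by the stochastic Fubini of Section \ref{Lipschitz continuity of residues of fBm increments}, is the antiderivative of the process $Y^y_n$ studied there), and $F^y_n(t) := \int_n^t K_H(t,s)\mathrm{d}W^y_s$ is independent of $\mathcal{F}_n$. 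Consequently $\int_n^{n+m}\mathrm{d}B^{X(s)}_s = R_X + T_X$, with $R_X = \int_n^{n+m} Y^{X(v)}_n(v)\,\mathrm{d}v$ the past-dependent residue and $T_X$ the future-measurable part.

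To compare $T_X$ with a clean Anderson partition function on a fresh environment, I would introduce an independent copy $\{\bar{B}^y\}_y$ of the fBm field and write $T_X = \bar{S}_{X_2}^m + D_X$, where $\bar{S}_{X_2}^m := \int_0^m \mathrm{d}\bar{B}^{X_2(s)}_s$ and $D_X$ is a Gaussian defect capturing the non-stationarity of the Volterra kernel. The heart of the argument is then to bound $\var(R_X)$ and $\var(D_X)$ uniformly over $X_2 \in \tilde{\mathcal{A}}_m$ by summing the pointwise estimates of Proposition \ref{Holder continuity of Y for H smaller than half} over the unit blocks $[n+k, n+k+1]$ for $0 \leq k < m$. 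This yields a uniform upper bound $\sigma^2(n,m)$ that grows strictly sublinearly in $n+m$. A supremum-of-Gaussians argument via Theorems \ref{Dudley theorem} and \ref{Borell} then shows that the cost of removing $R_X + D_X$ from inside $\log\mathbb{E}^{X_2}$ is at most $\varepsilon(n,m) = o(\max(n,m))$, which combined with the Markov factorization and the shift-invariance of $\bar{B}$ yields the approximate super-additivity
$$
\widehat{U}(n+m) \geq \widehat{U}(n) + \widehat{U}(m) - \varepsilon(n,m).
$$

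An approximate form of Fekete's lemma (absorbing the sublinear correction into an effective super-additive sequence) then gives convergence of $\widehat{U}(n)/n$ in $[-\infty,+\infty]$, and non-negativity of the limit follows from Jensen's inequality applied to the centered Gaussian $\mathcal{S}_X^n$ (the strict positivity $\lambda > 0$ being deferred to Section \ref{Lower Bound}). The main obstacle is the uniform control of $R_X + D_X$ over $\tilde{\mathcal{A}}_m$: because $F^y_n$ is \emph{not} itself an fBm, no naive coupling with a fresh environment succeeds, and the regularity of the residual process $Y^y_n$ developed in Section \ref{Lipschitz continuity of residues of fBm increments} is exactly what is needed to keep $\varepsilon(n,m)$ sublinear uniformly for all $H \in (0,1)$.
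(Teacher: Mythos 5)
Your high-level plan (Markov decoupling at time $n$, residue/innovation split via the Volterra kernel, control of the residue via Proposition~\ref{Holder continuity of Y for H smaller than half} plus Dudley/Borell, approximate super-additivity, and an almost-super-additive Fekete lemma) is the paper's skeleton, and your ancillary observations (the inclusion of jump events, Markov factorization via a polymer measure, non-negativity of $\lambda$ via Jensen) are correct. But two concrete steps in the middle would break as written.

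The comparison $T_X = \bar S^m_{X_2}+D_X$ against ``an independent copy $\{\bar B^y\}_y$'' is the problem. If $\bar B$ is literally independent of $W$, then $D_X$ is the difference of two independent Gaussians of variance $O\bigl(m^{\max(1,2H)}\bigr)$, nowhere near sublinear. If instead you mean $\bar B^y_u:=\int_0^u K_H(u,\sigma)\,\mathrm{d}\bigl(W^y_{n+\sigma}-W^y_n\bigr)$, i.e. an fBm built from the time-shifted driving noise so that it shares randomness with $T_X$, then the defect $D_X$ is driven by integrands of the form $K_H(t,s)-K_H(t-n,s-n)$ over $s\in[n,t]$; this ``non-stationarity'' object has nothing to do with the residue process $Y_n$, and Proposition~\ref{Holder continuity of Y for H smaller than half} gives you no handle on it. The paper avoids both pitfalls by never shifting the time axis. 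It replaces the \emph{past} of the driving Brownian motion: it defines $\widetilde W$ to be a fresh $\widehat W$ on $[0,n]$ and to agree with $W$ on $[n,\infty)$, sets $\widetilde B^x_t=\int_0^tK_H(t,s)\,\mathrm{d}\widetilde W^x_s$, and compares $\int_{n+1}^{n+m+1}\mathrm{d}B^X_t$ and $\int_{n+1}^{n+m+1}\mathrm{d}\widetilde B^X_t$ over the \emph{same} time interval. Since $B$ and $\widetilde B$ share their driving noise on $[n,\infty)$, the innovation parts cancel exactly in $\Delta^X$, leaving only $c_H\int Y_n^{X(u)}(u)\,\mathrm{d}u - c_H\int\widetilde Y_n^{X(u)}(u)\,\mathrm{d}u$, both within the reach of Proposition~\ref{Holder continuity of Y for H smaller than half}. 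Stationarity of fBm increments is then invoked only at the distributional level, to identify $\mathbb{E}\log\mathbb{E}^{X'}e^{\int_{n+1}^{n+m+1}\mathrm{d}\widetilde B^{X'(t)+y}_t}\mathbf{1}_{\mathcal B_m}$ with $\widehat U(m)$; there is never a pathwise comparison with a translated field, and hence no $D_X$ to estimate.

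Second, you cannot sum the estimates of Proposition~\ref{Holder continuity of Y for H smaller than half} over $0\le k<m$: that proposition is stated and proved for $k\in\mathds N^{\geq 1}$ only, and the bounds $k^{2H-2}$, $k^{2H-4}$ diverge as $k\to 0$ for every $H\in(0,1)$. In fact $\var Y_n(u)\to\infty$ as $u\downarrow n$, so $\mathbb{E}\sup_{u\in[n,n+1],\,|x|\leq N}\lvert Y^x_n(u)\rvert=+\infty$, and a Dudley/Borell bound over a $k=0$ block simply does not exist. This is precisely why the paper proves super-additivity in the form $\widehat U(n+m+1)\geq\widehat U(n)+\widehat U(m)-\widehat\epsilon(n,m)$: the unit buffer $[n,n+1)$ together with the no-jump event $\mathcal C$ replaces the contribution of the $k=0$ block by a single increment $B^{Y}_{n+1}-B^{Y}_n$ maximized over $O(\mathfrak N_n)$ sites, costing only $O(\sqrt{\log n})$, while the residue suprema are only needed over $u\geq n+1$, i.e.\ $k\geq 1$, which is exactly the range covered by the proposition. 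Without this buffer (or a genuinely different treatment of the $k=0$ block, e.g.\ integrating the pointwise-in-$u$ site supremum and exploiting integrability of $(u-n)^{H-1}$ near $u=n$, which you do not propose) your $\varepsilon(n,m)$ is not finite, let alone sublinear.
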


While $\{\widehat{U}(n)\}_{n \in \mathds{N}}$ is not super-additive in general as it is in the Brownian motion case, we seek some approximate super-additivity. The almost-super-additivity arguments in \cite{Viens} were the main inspiration for this section.

Let $\{f(n)\}_{n\in \mathds{N}}$ be a sequence of real numbers and $\{\epsilon(n)\}_{n\in \mathds{N}}$ a sequence of non-negative numbers with the property that
$$
\text{(i)} \; \lim_{n\rightarrow \infty} \frac{\epsilon(n)}{n}=0;\qquad
\text{(ii)} \;\sum_{n=1}^{\infty}\frac{\epsilon(2^n)}{2^n}<\infty\,.
$$
Then $\{f(n)\}_{n\in \mathds{N}}$ is called {\it almost super-additive} relative to $\{\epsilon(n)\}_{n\in \mathds{N}}$ if
$$
f(n+m)\geq f(n)+f(m)-\epsilon(n+m)
$$
for any $n, m \in \mathds{N}$. We have the following theorem \cite{Viens, AlmostSuperAdditive}
\begin{thm}\label{almost super additivity limit}
Let $\{f(n)\}_{n\in \mathds{N}}$ be almost super-additive relative to $\{\epsilon(n)\}_{n\in \mathds{N}}$ as defined above.\\
(1) If $\sup_n \frac{f(n)}{n}<+\infty$, then $\lim_{n\rightarrow \infty}\frac{f(n)}{n}$ exists and is finite.\\
(2) If $\sup_n \frac{f(n)}{n}=+\infty$, then $\{\frac{f(n)}{n}\}$ diverges to $+\infty$.
\end{thm}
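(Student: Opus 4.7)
The strategy I would follow is the classical Fekete paradigm adapted to the almost super-additive setting: first establish convergence along the dyadic subsequence $\{f(2^{k})/2^{k}\}$, then bridge to arbitrary integers using the binary representation. The hypothesis (ii) is tailored to the dyadic step, while (i) will be invoked to control the off-dyadic discrepancy.

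Setting $a_{k}:=f(2^{k})/2^{k}$, the case $n=m$ of almost super-additivity yields $f(2n)\ge 2f(n)-\epsilon(2n)$, hence $a_{k+1}\ge a_{k}-\epsilon(2^{k+1})/2^{k+1}$. Defining the tail $\sigma_{k}:=\sum_{j>k}\epsilon(2^{j})/2^{j}$, which is finite by (ii) and tends to zero, the sequence $b_{k}:=a_{k}+\sigma_{k}$ is monotone non-decreasing, hence converges in $\mathds{R}\cup\{+\infty\}$; since $\sigma_{k}\to 0$, so does $a_{k}$, to some limit $\lambda\in\mathds{R}\cup\{+\infty\}$.

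For general $n$, I would write $n=2^{k_{1}}+\cdots+2^{k_{\ell}}$ in binary with $k_{1}:=\lfloor\log_{2} n\rfloor>k_{2}>\cdots>k_{\ell}\ge 0$. Iterating almost super-additivity along the partial sums $s_{j}:=\sum_{i\le j}2^{k_{i}}$ yields
\[
\frac{f(n)}{n}\ge\sum_{i=1}^{\ell}a_{k_{i}}\frac{2^{k_{i}}}{n}-\frac{1}{n}\sum_{j=2}^{\ell}\epsilon(s_{j}).
\]
The first term is a convex combination of $a_{k_{i}}$'s placing weight $\ge 1/2$ on $k_{1}$; a Cesaro-type argument (using $a_{k}\to\lambda$ and the fact that the weights on indices $k_{i}$ below any fixed threshold vanish as $n\to\infty$) shows this combination tends to $\lambda$ in case (1). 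For the matching upper bound I would apply the dual inequality $f(n)\le f(2^{k_{1}+1})-f(2^{k_{1}+1}-n)+\epsilon(2^{k_{1}+1})$, combined with an analogous lower bound on $f(2^{k_{1}+1}-n)$ obtained by the same dyadic iteration together with the standing hypothesis $\sup_{m} f(m)/m<\infty$; the sandwich produces $\lim_{n} f(n)/n=\lambda<\infty$.

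The principal difficulty lies in controlling the cumulative error $\tfrac{1}{n}\sum_{j=2}^{\ell}\epsilon(s_{j})$: a priori $\ell=O(\log n)$, and condition (i) only guarantees each individual $\epsilon(s_{j})/s_{j}=o(1)$, yielding a bound of size $o(\log n)$ rather than $o(1)$. The resolution I would pursue is to group the partial sums dyadically and compare each $s_{j}$ with the nearest power of two, so that the total is dominated by a tail of $\sum_{k}\epsilon(2^{k})/2^{k}$; this is precisely the content of hypothesis (ii), and is the one place where (ii) cannot be replaced by (i). Finally, part (2) follows because if $\sup_{m}f(m)/m=+\infty$ then necessarily $\lambda=+\infty$ (otherwise the upper-bound half of the argument would force $\sup_{m}f(m)/m\le\lambda<\infty$), and then the lower-bound half of the argument gives $\liminf_{n} f(n)/n=+\infty$, completing the dichotomy.
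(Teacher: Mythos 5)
The paper does not actually prove this lemma (it is quoted from Derriennic--Hachem and from the appendix of \cite{Viens}), so your proposal has to stand on its own, and it has one genuine gap, located exactly at the step you yourself flag as the principal difficulty. Your dyadic step is fine: $b_k=a_k+\sigma_k$ is indeed nondecreasing, so $a_k\to\lambda$. The problem is the bridge. In your bottom-up iteration the errors are $\epsilon(s_j)$ at the prefix sums $s_j=2^{k_1}+\cdots+2^{k_j}$, and \emph{all} of these lie in the single block $[2^{k_1},2^{k_1+1})$, i.e.\ at scale $n$. Hypothesis (ii) constrains $\epsilon$ only at the exact powers $2^k$, and no monotonicity or regularity of $\epsilon$ is assumed, so ``comparing each $s_j$ with the nearest power of two'' is not a legitimate move: for instance $\epsilon$ could vanish at every power of two (making (ii) trivial) while $\epsilon(s)=s/\sqrt{\log s}$ at the relevant non-dyadic points, which is compatible with (i); then $\frac1n\sum_{j}\epsilon(s_j)$ can be of order $\sqrt{\log n}$, not $o(1)$. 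Even if $\epsilon$ were monotone, your grouping would only give about $\log_2 n$ copies of $\epsilon(2^{k_1+1})$, and summability of $\epsilon(2^k)/2^k$ does not imply $k\,\epsilon(2^k)/2^k\to0$. So the estimate you invoke is simply not a consequence of (i)--(ii), and the lower-bound half (on which your upper-bound half and part (2) both rely) is not established.

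The fix is to iterate top-down instead of bottom-up, so that the errors sit at geometrically decreasing arguments rather than piling up at scale $n$. Set $r_0:=n$ and $r_j:=r_{j-1}-2^{k_j}$; since $2^{k_j}$ is the leading binary digit of $r_{j-1}$ one has $r_j<r_{j-1}/2$. Applying $f(r_{j-1})\ge f(2^{k_j})+f(r_j)-\epsilon(r_{j-1})$ and chaining gives $f(n)\ge\sum_{j}f(2^{k_j})-\sum_{j\ge0}\epsilon(r_j)$, and now $\sum_{j\ge0}r_j<2n$, so for any $\delta>0$, choosing $M$ with $\epsilon(s)\le\delta s$ for $s\ge M$, the error is at most $2\delta n+M\max_{s<M}\epsilon(s)=o(n)$: here hypothesis (i) alone does the work, and (ii) is needed only for the dyadic step. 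Your Cesaro argument for the main term then gives $\liminf f(n)/n\ge\lambda$. For the upper bound, your dual inequality $f(n)\le f(2^{k_1+1})-f(2^{k_1+1}-n)+\epsilon(2^{k_1+1})$ is the right tool, but what you need to insert is not the standing hypothesis $\sup_m f(m)/m<\infty$ but the uniformized liminf bound $\sup_{1\le m\le 2^{k_1}}\bigl(\lambda m-f(m)\bigr)=o(2^{k_1})$ (which follows from the liminf estimate), together with $a_{k_1+1}\le\lambda$ (a consequence of the monotonicity of $b_k$); this yields $f(n)\le\lambda n+o(n)$ when $\lambda<\infty$, and then your dichotomy argument for part (2) goes through. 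With these repairs the architecture you chose is correct and is the standard one; as written, the error-control step would fail.
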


\begin{lem}
For any $n, m \in \mathds{N}$ we have
$$
\widehat{U}(n+m+1)\geq \widehat{U}(n)+\widehat{U}(m)-c_{\kappa, H} (m+n)^H \sqrt{\log(m+n)}\,.
$$
\end{lem}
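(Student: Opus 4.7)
The strategy is a Markov-type split of the walk at time $n+1$ with a ``no-jump buffer'' on $[n,n+1]$, combined with the crucial observation that by stationarity of fBm increments, $\tilde B^x_s:=B^x_{n+1+s}-B^x_{n+1}$ is itself a family of independent fBms of Hurst parameter $H$, even though it is strongly correlated with the restriction of $\{B^x\}$ to $[0,n+1]$ (a direct covariance computation gives $\mathbb{E}[\tilde B^x_s \tilde B^y_t]=\delta_{xy}R_H(s,t)$). First I would restrict the expectation defining $\widehat u(n+m+1)$ to the event $E=E_1\cap E_0\cap E_2$, where $E_1,E_0,E_2$ mean respectively ``at most $\mathfrak N_n$ jumps on $[0,n]$'', ``no jumps on $[n,n+1]$'' and ``at most $\mathfrak N_m$ jumps on $(n+1,n+m+1]$''. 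One easily checks $\mathfrak N_n+\mathfrak N_m\le\mathfrak N_{n+m+1}$ in both regimes of $H$, so $E\subseteq\mathcal A_{n+m+1}$. The strong Markov property of the walk at $n+1$ produces independent walks $X^{(1)}:=X|_{[0,n]}$ and $X^{(2)}(u):=X(n+1+u)-X(n+1)$ with $\mathrm P^X(E_0)=e^{-\kappa}$; the exponent splits as $S_1+\Delta+S_2$ with $S_1=\int_0^n \mathrm{d}B^{X^{(1)}(s)}_s$, $\Delta=B^{X^{(1)}(n)}_{n+1}-B^{X^{(1)}(n)}_n$, and (by the stationarity observation) $S_2=\int_0^m \mathrm{d}\tilde B^{X^{(1)}(n)+X^{(2)}(u)}_u$, yielding $\widehat u(n+m+1)\ge e^{-\kappa}\mathbb E^{X^{(1)},X^{(2)}}[e^{S_1+\Delta+S_2}\mathbf 1_{E_1}\mathbf 1_{E_2}]$.

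The only dependence of $S_2$ on $X^{(1)}$ is through the endpoint $X^{(1)}(n)$, which is decisive. Define the environment-measurable random function $h(y):=\log\mathbb E^{X^{(2)}}\bigl[e^{\int_0^m \mathrm{d}\tilde B^{y+X^{(2)}(u)}_u}\mathbf 1_{E_2}\bigr]$; spatial translation invariance of the fBm field (the sites are i.i.d.\ in distribution) combined with the fact that $\tilde B$ is an fBm family gives $\mathbb E[h(y)]=\widehat U(m)$ for every $y\in\mathds Z^d$. On $E_1$ one has $|X^{(1)}(n)|\le\mathfrak N_n$, so $h(X^{(1)}(n))\ge h_\mathrm{min}:=\min_{|y|\le\mathfrak N_n}h(y)$ and $\Delta\ge\Delta_\mathrm{min}:=\min_{|x|\le\mathfrak N_n}(B^x_{n+1}-B^x_n)$, both environment-measurable and walk-independent. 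Conditioning on $X^{(1)}$, factorising over the independent walks, and replacing $\Delta,h$ by their minima gives $\widehat u(n+m+1)\ge e^{-\kappa+h_\mathrm{min}+\Delta_\mathrm{min}}\widehat u(n)$, so taking $\log$ and then $\mathbb E$ yields
\[
\widehat U(n+m+1)\ge \widehat U(n)+\mathbb E[h_\mathrm{min}]+\mathbb E[\Delta_\mathrm{min}]-\kappa.
\]

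It remains to bound $\widehat U(m)-\mathbb E[h_\mathrm{min}]$ and $-\mathbb E[\Delta_\mathrm{min}]$. The latter is a classical expectation of the maximum of at most $(2\mathfrak N_n)^d$ centred unit-variance Gaussians and is $O(\sqrt{\log(n+m)})$. For the former I would apply Üstünel's Malliavin concentration inequality (Theorem~\ref{Ustunel's thm}) pointwise in $y$: since $\nabla h(y)$ is a Gibbs-weighted conditional expectation of $\nabla\!\int_0^m \mathrm{d}\tilde B^{y+X^{(2)}(u)}_u$, one has $\|\nabla h(y)\|_{\mathcal H}\le\sup_{X^{(2)}\in E_2}\sqrt{\var(\tilde S_2(y)\mid X^{(2)})}$. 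For $H>1/2$, Property~\ref{positive or negative correlation} forces this conditional variance to be maximised at the no-jump walk, giving the clean bound $m^{2H}$; Üstünel's tail together with a union bound over $|y|\le\mathfrak N_n$ then yields $\mathbb E\max_y|h(y)-\widehat U(m)|$ of order $m^H\sqrt{\log(n+m)}$, and the lemma follows. The main obstacle is the $H\le 1/2$ regime, where multi-jump walks a priori inflate the conditional variance up to order $m$ rather than $m^{2H}$; one must there carefully exploit the negative increment correlations of Property~\ref{positive or negative correlation} together with the fine Lipschitz control on the residue process $Y_{n+1}$ from Proposition~\ref{Holder continuity of Y for H smaller than half} in order to recover the advertised $(n+m)^H\sqrt{\log(n+m)}$ scaling of the error.
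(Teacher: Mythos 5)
Your setup (restricting to the same jump-count events with a no-jump buffer on $[n,n+1]$, splitting at time $n+1$ via the Markov property, and isolating the dependence on $X^{(1)}(n)$) parallels Steps~1--2 of the paper's proof, and for $H>1/2$ your Malliavin concentration route to bounding $\mathbb{E}[h_{\min}]$ does deliver the $m^H\sqrt{\log(m+n)}$ error. However, for $H<1/2$ there is a genuine gap that your closing sentence acknowledges but does not close, and your suggested fix does not fit the framework you have built. The obstruction is structural: you use the true shifted process $\tilde B^x_s=B^x_{n+1+s}-B^x_{n+1}$, so $h(y)$ is a functional of the entire Brownian driver $W^x$ on $[0,n+1+m]$ (past and future), and the best uniform bound on $\|\nabla h(y)\|_{\mathcal H}^2$ over walks in $E_2$ is of order $m(\rho\kappa)^{1-2H}\sim m$, not $m^{2H}$. Üstünel's inequality then gives concentration at scale $\sqrt{m}$, so the union bound over $|y|\le\mathfrak N_n$ produces an error $\sqrt{m\log(n+m)}$, which is strictly worse than the stated $(m+n)^H\sqrt{\log(m+n)}$ when $H<1/2$. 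Exploiting Property~\ref{positive or negative correlation} or the Lipschitz control on $Y_{n+1}$ at this stage cannot help, because the bottleneck is the magnitude of the Malliavin derivative of $h$ itself, not its modulus of continuity in $y$.

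The paper sidesteps the concentration problem entirely. Instead of the true shifted fBm, it constructs an \emph{artificial} fBm $\widetilde B^x$ by gluing a fresh, independent Brownian past onto the true Brownian increments after time $n$ via the Volterra kernel $K_H$. The tilt weight $e^{\int_0^n\mathrm dB^{X}}\mathbf 1_{\mathcal A_n}/\mathbb E^X[\cdots]$ is then measurable with respect to $\sigma\{W^x_s:s\le n\}$, while $\log\mathbb E^{X'}[e^{\int\mathrm d\widetilde B^{X'+Y}}\mathbf 1_{\mathcal B_m}]$ is measurable with respect to the fresh past and the post-$n$ increments, so the two factor exactly under $\mathbb E$; pushing $\log$ inside $\overline{\mathbb E}^Y$ by Jensen then produces $\widehat U(m)$ with \emph{no} error and no minimum over $y$. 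The price is the residue $\Delta^X$, the difference between the true integral and the artificial one, which the Volterra representation expresses as a sum of integrals of $Y_n^x(u)$ over $[n+k,n+k+1]$. Proposition~\ref{Holder continuity of Y for H smaller than half} shows $Y_n$ restricted to $[n+k,n+k+1]$ has variance of order $k^{2H-2}$ and a Lipschitz constant of order $k^{H-2}$, so Dudley's entropy bound gives $\mathbb E\sup_{x,u}Y_n^x(u)\curlyeqprec k^{H-1}\sqrt{\log(n+m)}$; summing over $k=1,\dots,m$ yields the crucial $m^H\sqrt{\log(n+m)}$ for \emph{all} $H\in(0,1)$. So the key idea you are missing is the replacement of the genuine past by an independent fake past, which converts ``concentration of $h(y)$ over $y$'' (hard for $H<1/2$) into ``smallness of the residue $Y_n$'' (handled uniformly in $H$ by the already-proved Lipschitz estimates). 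The paper never invokes Üstünel's inequality in this lemma; only Dudley's theorem is needed.
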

\begin{proof}[Proof of Lemma]\hfill \break
\textbf{Step 1: }
Take arbitrary $n, m \in \mathds{N}$ and without loss of generality assume that ${n\geq m}$.\\
Let $\mathcal{A}_n$ be the event that the number of jumps of the random walk in the time interval $[0,n)$ is less than $\mathfrak{N}_n$ defined in \eqref{N_t}, and similarly $\mathcal{B}_m$ be the event that the random walk has less than $\mathfrak{N}_m$ jumps in the interval $[n+1,n+m+1)$. Let also $\mathcal{C}$  be the event that the random walk has no jump in the interval $[n,n+1)$.
We have
\begin{equation}\label{superadditivity equ 1}
\widehat{U}(m+n+1)-\widehat{U}(n) \geq \mathbb{E} \log \mathbb{E}^X \Bigl(\frac{e^{\int_0^n \mathrm{d}B^{X(t)}_t} \mathbf{1}_{\mathcal{A}_n}}{\mathbb{E}^X[e^{\int_0^n \mathrm{d}B^{X(t)}_t} \mathbf{1}_{\mathcal{A}_n}]} e^{\int_n^{n+m+1} \mathrm{d}B^{X(t)}_t}\mathbf{1}_{\mathcal{B}_m\cap\mathcal{C}}\Bigl)\,.
\end{equation}
Let $\mathcal{F}$ be the sigma field generated by the random walk up to time $n$. Then the right-hand-side of the above equation is equal to
\begin{equation}\label{superadditivity equ 2}
\mathbb{E}^X \biggl(\frac{e^{\int_0^n \mathrm{d}B^{X(t)}_t} \mathbf{1}_{\mathcal{A}_n}}{\mathbb{E}^X[e^{\int_0^n \mathrm{d}B^{X(t)}_t} \mathbf{1}_{\mathcal{A}_n}]} \mathbb{E}^X \Bigl(e^{\int_n^{n+m+1} \mathrm{d}B^{X(t)}_t}\mathbf{1}_{\mathcal{B}_m\cap\mathcal{C}}| \mathcal{F}\Bigr)\biggr).
\end{equation}
For any $t\geq n$, let $\widetilde{X}(t):=X(t)-X(n)$. By the Markov property of the random walk, and then the fact that $\{\widetilde{X}(t)\}_{t\geq n}$ is independent of $\mathcal{F}$ we have
$$
\begin{aligned}
\mathbb{E}^X \bigl(e^{\int_n^{n+m+1} \mathrm{d}B^{X(t)}_t}\mathbf{1}_{\mathcal{B}_m\cap\mathcal{C}}|\mathcal{F}\bigr)&=
\mathbb{E}^X \bigl(e^{\int_n^{n+m+1} \mathrm{d}B^{X(t)}_t}\mathbf{1}_{\mathcal{B}_m\cap\mathcal{C}}|X(n)\bigr)\\
&=\mathbb{E}^X \bigl(e^{\int_n^{n+m+1} \mathrm{d}B^{\widetilde{X}(t)+X(n)}_t}\mathbf{1}_{\mathcal{B}_m\cap\mathcal{C}}|X(n)\bigr)\\
&=\mathbb{E}^{\widetilde{X}} \bigl(e^{\int_n^{n+m+1} \mathrm{d}B^{\widetilde{X}(t)+X(n)}_t}\mathbf{1}_{\mathcal{B}_m\cap\mathcal{C}}\bigr)\\
&=\mathbb{E}^{\widetilde{X}} \bigl(e^{\int_n^{n+m+1} \mathrm{d}B^{\widetilde{X}(t)+Y}_t}\mathbf{1}_{\mathcal{B}_m\cap\mathcal{C}}\bigr),\\
\end{aligned}
$$
where $Y:=X(n)$.\\
Now denote by $\overline{\mathbb{E}}^Y$ the expectation with respect to the random variable Y with the following distribution
$$
\overline{\mathrm{P}}(Y=y)=\mathbb{E}^X \Bigl( \frac{e^{\int_0^n \mathrm{d}B^{X(t)}_t} \mathbf{1}_{\mathcal{A}_n}}{\mathbb{E}^X[e^{\int_0^n \mathrm{d}B^{X(t)}_t} \mathbf{1}_{\mathcal{A}_n}]}\mathbf{1}_{X(n)=y}\Bigr) \qquad : \qquad y\in\mathds{Z}^d.
$$
So equations \eqref{superadditivity equ 1} and \eqref{superadditivity equ 2} imply
\begin{equation}\label{superaditivity equ 3}
\begin{aligned}
\widehat{U}(m+n+1)-\widehat{U}(n) &\geq \mathbb{E} \,\log \, \overline{\mathbb{E}}^Y \Bigl( \mathbb{E}^{\widetilde{X}} \bigl(e^{\int_n^{n+m+1} \mathrm{d}B^{\widetilde{X}(t)+Y}_t}\mathbf{1}_{\mathcal{B}_m\cap\mathcal{C}}\bigr)\Bigr)\\
&\geq \mathbb{E} \,\overline{\mathbb{E}}^Y \log \, \mathbb{E}^{\widetilde{X}} \bigl(e^{\int_n^{n+m+1} \mathrm{d}B^{\widetilde{X}(t)+Y}_t}\mathbf{1}_{\mathcal{B}_m\cap\mathcal{C}}\bigr).
\end{aligned}
\end{equation}
\textbf{Step 2: }
Let $\{\widehat{W}^{x}\}_{x \in \mathds{Z}^d}$ be a family of independent standard Brownian motions, which is independent of any random variable introduced so far, in particular independent of the random walks $X(\cdot)$ and $\widetilde{X}(\cdot)$, the fractional Brownian motions $\{B^x\}_{x \in \mathds{Z}^d}$ and hence their corresponding Brownian motions $\{W^x\}_{x \in \mathds{Z}^d}$ appearing in their integral representation. For any $x \in \mathds{Z}^d$ define $\widetilde{W}_s^{x}$ as
$$
\widetilde{W}_t^{x}:=
\begin{cases}
\widehat{W}^{x}_t & \text{for } 0 \leq t \leq n \\
W^x_t- W^x_{n} + \widehat{W}^{x}_{n}  & \text{for } t > n \,.
\end{cases}
$$
It is easily verified that $\widetilde{W}^{x}$ is itself a standard Brownian motion.\\
We define the following family of fractional Brownian motions indexed by $\mathds{Z}^d$
\begin{equation}\label{Definition of artificial B for super additivity}
\widetilde{B}^{x}_t:= \int_0^t K_H(t,s) \mathrm{d}\widetilde{W}^{x}_s\,.
\end{equation}
It is clear that for $t \geq n$
\begin{equation}\label{Decomposition of B for super-additivity}
\widetilde{B}^{x}_t = \int_0^{n} K_H(t,s)\mathrm{d}\widehat{W}^{x}_s
+ \int_{n}^t K_H(t,s)\mathrm{d}W^x_s\,.
\end{equation}
Now let $\{t_i\}_i$, $t_i \geq n+1$, be the jump times of the random walk after time $t=n+1$, and for every $i$ let $x_i$ be the position of the random walk in the time interval $[t_i,t_{i+1})$. Then by \eqref{Decomposition of B for super-additivity} and noting $B^{x}_t = \int_{0}^t K_H(t,s)\mathrm{d}W^x_s$ we have
$$
\int_{n+1}^{n+m+1} \mathrm{d}B^{\widetilde{X}(t)+Y}_t=\int_{n+1}^{n+m+1} \mathrm{d}\widetilde{B}^{\widetilde{X}(t)+Y}_t+\Delta^X\,,
$$
where
$$
\begin{aligned}
\Delta^X:=&\sum_i \int_0^n \bigl(K_H(s,t_{i+1})-K_H(s,t_{i})\bigr)\mathrm{d}W^{x_i}_s\\
& \qquad \qquad -\sum_i \int_0^n \bigl(K_H(s,t_{i+1})-K_H(s,t_{i})\bigr)\mathrm{d}\widetilde{W}^{x_i}_s\,.
\end{aligned}
$$
By the definition of $K_H$ and using the stochastic Fubini we have
$$
\begin{aligned}
\int_0^n \bigl(K_H(s,t_{i+1})-K_H(s,t_{i})\bigr)\mathrm{d}W^{x_i}_s &=
c_H \int_0^n \int_{t_i}^{t_{i+1}} (u-s)^{H-\frac{3}{2}}(\frac{u}{s})^{H-\frac{1}{2}}\,\mathrm{d}u\,\mathrm{d}W^{x_i}_s\\
&=c_H \int_{t_i}^{t_{i+1}} \int_0^n (u-s)^{H-\frac{3}{2}}(\frac{u}{s})^{H-\frac{1}{2}}\,\mathrm{d}W^{x_i}_s \, \mathrm{d}u\\
&=c_H \int_{t_i}^{t_{i+1}} Y_n^{x_i}(u) \, \mathrm{d}u\,,
\end{aligned}
$$
and similarly
$$
\int_0^n \bigl(K_H(s,t_{i+1})-K_H(s,t_{i})\bigr)\mathrm{d}\widetilde{W}^{x_i}_s =
c_H \int_{t_i}^{t_{i+1}} \widetilde{Y}_n^{x_i}(u) \, \mathrm{d}u\,,
$$
where
$$
\widetilde{Y}_n^{x_i}(u)=\int_0^n (u-s)^{H-\frac{3}{2}}(\frac{u}{s})^{H-\frac{1}{2}}\,\mathrm{d}\widetilde{W}^{x_i}_s\,.
$$
Hence we have
$$
\Delta^X=c_H \int_{n+1}^{n+m+1} Y_n^{X(u)}(u) \, \mathrm{d}u-c_H \int_{n+1}^{n+m+1} \widetilde{Y}_n^{X(u)}(u) \, \mathrm{d}u\,.
$$
So under the event $\mathcal{A}_n\cap\mathcal{B}_m$, $\Delta^X$ is bounded from below as follows
$$
\Delta^X \geq c_H \sum_{k=1}^{m} \inf_{\substack{|x|\leq \mathfrak{N}_n+\mathfrak{N}_m\\
u\in [n+k,n+k+1]}}Y_n^{x}(u)-c_H \sum_{k=1}^{m} \sup_{\substack{|x|\leq \mathfrak{N}_n+\mathfrak{N}_m\\
u\in [n+k,n+k+1]}}\widetilde{Y}_n^{x}(u)\,.
$$
Also, under the event $\mathcal{A}_n\cap\mathcal{C}$ we have
$$
\begin{aligned}
\int_{n}^{n+1}\mathrm{d}B^{\widetilde{X}(t)+Y}_t &= B^{Y}_{n+1}-B^{Y}_{n}\\
& \geq \inf_{|y|\leq \mathfrak{N}_n} (B^{y}_{n+1}-B^{y}_{n})\,.
\end{aligned}
$$
So under the event $\mathcal{A}_n\cap\mathcal{C}\cap\mathcal{B}_m$ we have
$$
\begin{aligned}
&\int_n^{n+m+1} \mathrm{d}B^{\widetilde{X}(t)+Y}_t =
\int_n^{n+1} \mathrm{d}B^{\widetilde{X}(t)+Y}_t+\int_{n+1}^{n+m+1} \mathrm{d}\widetilde{B}^{\widetilde{X}(t)+Y}_t+\Delta^X\\
&\qquad \qquad \geq \int_{n+1}^{n+m+1} \mathrm{d}\widetilde{B}^{\widetilde{X}(t)+Y}_t +\inf_{|y|\leq \mathfrak{N}_n} (B^{y}_{n+1}-B^{y}_{n}) \\
&\qquad \qquad \qquad +c_H \sum_{k=1}^{m} \inf_{\substack{|x|\leq \mathfrak{N}_n+\mathfrak{N}_m\\
u\in [n+k,n+k+1]}}Y_n^{x}(u)-c_H \sum_{k=1}^{m} \sup_{\substack{|x|\leq \mathfrak{N}_n+\mathfrak{N}_m\\
u\in [n+k,n+k+1]}}\widetilde{Y}_n^{x}(u)\,.
\end{aligned}
$$
\textbf{Step 3: }
Plugging this inequality into Equation \eqref{superaditivity equ 3} we get
$$
\begin{aligned}
\widehat{U}(m+n+1)-\widehat{U}(n) &\geq
\mathbb{E} \,\overline{\mathbb{E}}^Y \log \, \mathbb{E}^{\widetilde{X}} \bigl(e^{\int_{n+1}^{n+m+1} \mathrm{d}\widetilde{B}^{\widetilde{X}(t)+Y}_t}\mathbf{1}_{\mathcal{B}_m\cap\mathcal{C}}\bigr)\\
&\quad +\mathbb{E}\inf_{|y|\leq \mathfrak{N}_n} (B^{y}_{n+1}-B^{y}_{n})
+c_H \sum_{k=1}^{m} \mathbb{E}\inf_{\substack{|x|\leq \mathfrak{N}_n+\mathfrak{N}_m\\
u\in [n+k,n+k+1]}}Y_n^{x}(u)\\
&\qquad -c_H \mathbb{E}\sum_{k=1}^{m} \sup_{\substack{|x|\leq \mathfrak{N}_n+\mathfrak{N}_m\\
u\in [n+k,n+k+1]}}\widetilde{Y}_n^{x}(u)
\end{aligned}
$$
For $t\geq n+1$, let $X'(t):=X(t)-X(n+1)$. Then we have
$$
\begin{aligned}
&\mathbb{E} \,\overline{\mathbb{E}}^Y \log \, \mathbb{E}^{\widetilde{X}} \bigl(e^{\int_{n+1}^{n+m+1} \mathrm{d}\widetilde{B}^{\widetilde{X}(t)+Y}_t}\mathbf{1}_{\mathcal{B}_m\cap\mathcal{C}}\bigr)\\
&\qquad =\mathbb{E} \,\overline{\mathbb{E}}^Y \log \, \mathbb{E}^{\widetilde{X}} \bigl(e^{\int_{n+1}^{n+m+1} \mathrm{d}\widetilde{B}^{X'(t)+Y}_t}\mathbf{1}_{\mathcal{B}_m\cap\mathcal{C}}\bigr)\\
&\qquad =\mathbb{E} \,\overline{\mathbb{E}}^Y \log \, \mathbb{E}^{X'} \bigl(e^{\int_{n+1}^{n+m+1} \mathrm{d}\widetilde{B}^{X'(t)+Y}_t}\mathbf{1}_{\mathcal{B}_m}\bigr)+\log \mathrm{P}(\mathcal{C})\,.
\end{aligned}
$$
Let $\widehat{\mathcal{G}}_{[0,n]}$ be the sigma field generated
by $\{\widehat{W}^{x}_s \;;\; s \in [0,n] \,,\, x \in \mathds{Z}^d\}$ and $\mathcal{G}_{[n,\infty)}$ the sigma field generated by $\{{W^{x}_s- W^{x}_{n}}\;;\; s \in [n,\infty)\,,\, x \in \mathds{Z}^d\}$. Also denote by $\mathcal{G}_o$ the sigma field generated by $\{W^{x}_s\;;\; s \in [0,n]\,,\, x \in \mathds{Z}^d\}$. Clearly $\mathcal{G}_1$ is independent of $\mathcal{G}_o$.
It is evident that for any $t \geq n$ the process $\widetilde{B}^{x}_t$ is measurable with respect to $\mathcal{G}_1:=\widehat{\mathcal{G}}_{[0,n]} \vee \mathcal{G}_{[n,\infty)}$ where $\vee$ denotes the smallest sigma field containing the both. So $\int_{n+1}^{n+m+1} \mathrm{d}\widetilde{B}^{ \widetilde{X}(t)+y}_t$ is also measurable with respect to $\mathcal{G}_1$.\\
Using the notation $f(Y):=\log \, \mathbb{E}^{X'} \bigl(e^{\int_{n+1}^{n+m+1} \mathrm{d}\widetilde{B}^{X'(t)+Y}_t}\mathbf{1}_{\mathcal{B}_m}\bigr)$, we have
$$
\begin{aligned}
\mathbb{E} \,\overline{\mathbb{E}}^Y [f(Y)]&=\mathbb{E} \mathbb{E}^X \bigl[\frac{e^{\int_0^n \mathrm{d}B^{X(t)}_t} \mathbf{1}_{\mathcal{A}_n}}{\mathbb{E}^X[e^{\int_0^n \mathrm{d}B^{X(t)}_t} \mathbf{1}_{\mathcal{A}_n}]}f(Y)\bigr]\\
&=\mathbb{E}^X \bigl[\mathbb{E} \bigl(\frac{e^{\int_0^n \mathrm{d}B^{X(t)}_t} \mathbf{1}_{\mathcal{A}_n}}{\mathbb{E}^X[e^{\int_0^n \mathrm{d}B^{X(t)}_t} \mathbf{1}_{\mathcal{A}_n}]}\bigl) \mathbb{E} \bigl(f(Y)\bigr)\bigr]\,.
\end{aligned}
$$
where we used the fact that $\frac{e^{\int_0^n \mathrm{d}B^{X(t)}_t} \mathbf{1}_{\mathcal{A}_n}}{\mathbb{E}^X[e^{\int_0^n \mathrm{d}B^{X(t)}_t} \mathbf{1}_{\mathcal{A}_n}]}$ is measurable with respect to $\mathcal{G}_o$ and hence independent of $f(Y)$.
But for every $y\in\mathds{Z}^d$, the random variable $f(y)=\mathbb{E}^{X'} \bigl(e^{\int_{n+1}^{n+m+1} \mathrm{d}\widetilde{B}^{\widetilde{X}(t)+y}_t}\mathbf{1}_{\mathcal{B}_m}\bigr)$ has the same distribution as $\mathbb{E}^{X} \bigl(e^{\int_{0}^{m} \mathrm{d}B^{X(t)}_t}\mathbf{1}_{\mathcal{A}_m}\bigr)$. So we have
$$
\mathbb{E} [f(Y)]=\mathbb{E}\log \, \mathbb{E}^{X} \bigl(e^{\int_{0}^{m} \mathrm{d}B^{X(t)}_t}\mathbf{1}_{\mathcal{A}_m}\bigr)=\widehat{U}(m)\,.
$$
Hence we get the following conclusion
\begin{equation}\label{superadditivity inequality, general form}
\widehat{U}(m+n+1)-\widehat{U}(n) \geq \widehat{U}(m) - \widehat{\epsilon}(n,m)\,,
\end{equation}
where
\begin{equation}\label{expression for epsilon-hat}
\begin{aligned}
\widehat{\epsilon}(n,m)&:=-\mathbb{E}\inf_{|y|\leq \mathfrak{N}_n} (B^{y}_{n+1}-B^{y}_{n})
-c_H \sum_{k=1}^{m} \mathbb{E}\inf_{\substack{|x|\leq \mathfrak{N}_n+\mathfrak{N}_m\\
u\in [n+k,n+k+1]}}Y_n^{x}(u)\\
&\qquad -\log \mathrm{P}(\mathcal{C})+c_H \mathbb{E}\sum_{k=1}^{m} \sup_{\substack{|x|\leq \mathfrak{N}_n+\mathfrak{N}_m\\
u\in [n+k,n+k+1]}}\widetilde{Y}_n^{x}(u)\\
&=\mathbb{E}\sup_{|y|\leq \mathfrak{N}_n} (B^{y}_{n+1}-B^{y}_{n})
+c_H \sum_{i=1}^{m} \mathbb{E}\sup_{\substack{|x|\leq \mathfrak{N}_n+\mathfrak{N}_m\\
u\in [n+k,n+k+1]}}Y_n^{x}(u)\\
&\qquad -\log \mathrm{P}(\mathcal{C})+c_H \mathbb{E}\sum_{k=1}^{m} \sup_{\substack{|x|\leq \mathfrak{N}_n+\mathfrak{N}_m\\
u\in [n+k,n+k+1]}}\widetilde{Y}_n^{x}(u)\,.
\end{aligned}
\end{equation}
\textbf{Step 4: }
We are going to bound the terms in \eqref{expression for epsilon-hat} applying Dudley's theorem \ref{Dudley theorem}.\\
For the first term we use the fact from elementary probability \cite{Lifshits} that expected-value of the maximum of $n$ centered Gaussian random variables is bounded by $\sigma \sqrt{2 \log n}$ where $\sigma^2$ is the maximum of their variances . As $var(B^{y}_{n+1}-B^{y}_{n})=1$ for any $y$ and $n$, we have
\begin{equation}\label{the first term for epsilon-hat}
\mathbb{E}\sup_{|y|\leq \mathfrak{N}_n} (B^{y}_{n+1}-B^{y}_{n})\leq \sqrt{2\log \mathfrak{N}_n}\leq c'_{\kappa, H}\sqrt{\log n}\,,
\end{equation}
where $c'_{\kappa, H}$ is a positive constant that only depends on $\kappa$ and $H$.

For $l\in \mathds{N}^{\geq 1}$, let $\{u_i\}_{i=1}^l$ be the $l$ equally-spaced points on the interval $(n+k,n+k+1)$. Then for any $u \in [n+k,n+k+1]$ there exists a $u_i$ with $|u-u_i|\leq \frac{1}{2l}$. Using proposition \ref{Holder continuity of Y for H smaller than half} on the Lipschitz continuity of $Y_n$ and noting that $k\leq m\leq n$, for every $x \in \mathds{Z}^d$ we have
$$
\mathbb{E}\bigl[Y_{n}^x(u)-Y_{n}^x(u_i)\bigr]^2 \leq c_H k^{2H-4}\,(u-u_i)^2 \leq c_H k^{2H-4} \frac{1}{(2l)^2}
$$
and
$$
\mathbb{E}\bigl(Y_{n}^x(u)\bigr)^2 \leq C_H k^{2H-2}\,,
$$
where $c_H$ and $C_H$ are some positive constants that depend only on $H$. This means that for $0<\varepsilon<c'_H k^{H-2}$, where $c'_H:=\sqrt{c_H}/2$, we can cover
$$
{\{Y_n^x(u)\;;\;u\in [n+k,n+k+1], x\in \mathds{Z}^d, |x|\leq \mathfrak{N}_{n}+\mathfrak{N}_{m}\}}
$$
by $(\mathfrak{N}_{n}+\mathfrak{N}_{m})\frac{c'_H k^{H-2}}{\varepsilon}$ balls of radius $\varepsilon$.\\
For ${c'_H k^{H-2} \leq \varepsilon < C'_H k^{H-1}}$, where $C'_H:=\sqrt{2C_H}$, this set can be covered by $\mathfrak{N}_{n}+\mathfrak{N}_{m}$ $\varepsilon$-balls. And finally for ${\varepsilon \geq C'_H k^{H-1}}$, the whole set can be cover with one single ball. So by Dudley's theorem \ref{Dudley theorem} we have
$$
\begin{aligned}
\mathbb{E}\sup_{\substack{|x|\leq \mathfrak{N}_n+\mathfrak{N}_m\\
u\in [n+i,n+i+1]}}Y_n^{x}(u)
& \leq K \int_{0}^{c'_H k^{H-2}} \sqrt{\log \bigl((\mathfrak{N}_{n}+\mathfrak{N}_{m})\frac{c'_H k^{H-2}}{\varepsilon}\bigr)} \,\mathrm{d}\varepsilon\\
& \qquad \qquad +K \int_{c'_H k^{H-2}}^{C'_H k^{H-1}} \sqrt{\log (\mathfrak{N}_{n}+\mathfrak{N}_{m})} \,\mathrm{d}\varepsilon\\
& \leq k^{H-1}c_{\kappa, H}'' \sqrt{\log (n+m)}\,.
\end{aligned}
$$
So
\begin{equation}\label{the second term for epsilon-hat}
\begin{aligned}
\sum_{k=1}^{m}\mathbb{E}\sup_{\substack{|x|\leq \mathfrak{N}_n+\mathfrak{N}_m\\
u\in [n+i,n+i+1]}}Y_n^{x}(u)
&\leq c_{\kappa, H}'' \sqrt{\log (n+m)} \sum_{k=1}^{m} k^{H-1}\\
& \leq c_{\kappa, H}'' m^H \sqrt{\log (n+m)}\,.
\end{aligned}
\end{equation}
In the same way we have
\begin{equation}\label{the third term for epsilon-hat}
\sum_{k=1}^{m}\mathbb{E}\sup_{\substack{|x|\leq \mathfrak{N}_n+\mathfrak{N}_m\\
u\in [n+i,n+i+1]}}\widetilde{Y}_n^{x}(u)
\leq c_{\kappa, H}'' m^H \sqrt{\log (n+m)}\,.
\end{equation}
As we additionally have $\mathrm{P}(\mathcal{C})=e^{-\kappa}$, by Equations \eqref{expression for epsilon-hat}, \eqref{the first term for epsilon-hat}, \eqref{the second term for epsilon-hat}, and \eqref{the third term for epsilon-hat} we obtain the following inequality
$$
\begin{aligned}
\widehat{\epsilon}(n,m)
\leq c_{\kappa, H} m^H \sqrt{\log (n+m)}\,.
\end{aligned}
$$
This inequality along with Equation \eqref{superadditivity inequality, general form} completes the proof.
\end{proof}

\begin{proof}[Proof of Theorem \ref{limit on integers}]
Applying the above lemma we can easily see that $\{\widehat{U}(n-1)\}_{n \in \mathds{N}}$ is almost-super-additive with respect to $\epsilon(n):=c_{\kappa, H} n^H \sqrt{\log(n)}$.
Then theorem \ref{almost super additivity limit} implies that $\{\frac{\widehat{U}(n-1)}{n}\}_{n \in \mathds{N}}$ converges to some positive extended real number and hence so does $\{\frac{\widehat{U}(n)}{n}\}_{n \in \mathds{N}}$.
\end{proof}
\section{Quenched Limits}\label{Quenched limits}
In this section we consider the quenched limits. \\
We introduce the following notation:
$$
\widehat{u}(t):= \mathbb{E}^X\bigl[e^{ \int_0^t \mathrm{d}B^{X(s)}_s} \mathbf{1}_{\mathcal{A}_t} \bigr] \quad \text{and} \quad \widehat{U}(t):=\mathbb{E} \log \widehat{u}(t)
$$
where $\mathcal{A}_t$ is the same event defined after Eq. \eqref{N_t}. Recall also the definition of $u(t)$ from Eq. \eqref{u}.

In the first proposition we show that the convergence of $\{\frac{\widehat{U}(n)}{n}\}_{n \in \mathds{N}}$ to a strictly positive number $\lambda$ implies the convergence of $\{\frac{\log \widehat{u}(n)}{n}\}_{n \in \mathds{N}}$ to $\lambda$. Then in the second proposition, we show that this in its turn implies the convergence of $\{\frac{\log u(t)}{t}\}_{t\in \mathds{R}^{>0}}$ to $\lambda$ as $t$ goes off to $+\infty$. In the following proof we use arguments from the Malliavin calculus. The use of Malliavin calculus to obtain concentration in the polymer literature has appeared in earlier publications; see for examples \cite{Rovira-Tindel, CranstonComets}.
\begin{prop}
For any function $f:\mathds{R}^{>0}\rightarrow \mathds{R}^{>0}$ that grows at least as fast as a linear function, we have
$$
\lim_{\substack{n\rightarrow \infty \\ n \in \mathds{N}}}\Bigl(\frac{\widehat{U}(n)}{f(n)}-\frac{\log \widehat{u}(n)}{f(n)}\Bigr)=0\qquad \text{almost surely}.
$$
\end{prop}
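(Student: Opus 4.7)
The plan is to couple Malliavin calculus with the Ustunel concentration bound of Theorem~\ref{Ustunel's thm} so that $\log\widehat{u}(n)$ concentrates Gaussianly around $\widehat{U}(n)=\mathbb{E}\log\widehat{u}(n)$ at a scale comfortably smaller than $f(n)$, and then to invoke Borel--Cantelli along $n\in\mathds{N}$.

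First I would compute $\nabla\log\widehat{u}(n)$ in the isonormal framework of Section~\ref{preliminaries}. For a fixed random walk path $X$ with jump times $0=t_0<t_1<\dots<t_{N+1}=n$ and sites $x_i$, the stochastic integral $I_X:=\int_0^n\mathrm{d}B_s^{X(s)}$ equals $\textbf{B}(h_X)$ for
$h_X:=\sum_{i=0}^N\mathbf{1}_{[t_i,t_{i+1}]\times\{x_i\}}\in\mathcal{H}$, so $\nabla I_X=h_X$ and hence
\[
\nabla\log\widehat{u}(n)\;=\;\frac{\mathbb{E}^X\bigl[e^{I_X}\mathbf{1}_{\mathcal{A}_n}\,h_X\bigr]}{\widehat{u}(n)}.
\]
This is a weighted average in $\mathcal{H}$ of the $h_X$'s, so by the triangle inequality for Bochner integrals
\[
\|\nabla\log\widehat{u}(n)\|_{\mathcal{H}}\;\leq\;\sup_{X\in\mathcal{A}_n}\|h_X\|_{\mathcal{H}}\;=\;\sup_{X\in\mathcal{A}_n}\sqrt{\var(I_X)},
\]
which is a \emph{deterministic} quantity depending only on $H$, $\kappa$, and $n$.

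Second, I would bound this supremum using exactly the variance estimates already established in Section~\ref{Approximation}. For $H>1/2$ the positive-correlation Property~\ref{positive or negative correlation} gives $\var(I_X)\leq n^{2H}$; for $H\leq 1/2$ the negative correlation combined with concavity of $x\mapsto x^{2H}$ and the jump-count constraint of $\mathcal{A}_n$ gives $\var(I_X)\leq C_{\kappa,H}\,n$. Setting $M_n^2:=C_{\kappa,H}\,n^{2H\vee 1}$, Theorem~\ref{Ustunel's thm} then yields, for any $\varepsilon>0$,
\[
\mathbb{P}\bigl\{|\log\widehat{u}(n)-\widehat{U}(n)|>\varepsilon f(n)\bigr\}\;\leq\;2\exp\!\Bigl(-\tfrac{\varepsilon^{2} f(n)^{2}}{2M_n^{2}}\Bigr)\;\leq\;2\exp\!\bigl(-c_\varepsilon\,n^{2-(2H\vee 1)}\bigr),
\]
where I used the assumption $f(n)\geq\alpha n$. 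Since $H<1$, the exponent $2-(2H\vee 1)$ is strictly positive and the bound is summable in $n$. Applying Borel--Cantelli along a countable sequence $\varepsilon_k\downarrow 0$ then gives the claimed almost sure convergence.

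The main technical hurdle I anticipate is the rigorous justification that $\widehat{u}(n)\in\mathbb{D}^{1,2}$ with the chain-rule expression above, in particular commuting $\nabla$ with $\mathbb{E}^X$. This can be handled by decomposing $\mathbb{E}^X$ as a sum over jump numbers (a.s.\ finite), writing each summand as a smooth functional of finitely many fBm increments with uniform integrability supplied by the exponential moments of $I_X$, and using that $\widehat{u}(n)>0$ pathwise so the chain rule for $\log$ applies. Once this is in place, no idea beyond Ustunel's inequality is needed.
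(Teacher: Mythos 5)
Your proposal is correct and follows essentially the same route as the paper: compute $\nabla\log\widehat{u}(n)$ in the isonormal framework, bound $\|\nabla\log\widehat{u}(n)\|_{\mathcal{H}}$ by $\sup_{X\in\mathcal{A}_n}\|h_X\|_{\mathcal{H}}$ (the paper does this via Cauchy--Schwarz on the inner product of two independent replicas, which is equivalent to your Bochner-integral triangle inequality), then use the same variance bounds $n^{2H}$ for $H>1/2$ and $C_{\kappa,H}n$ for $H\le 1/2$, apply \"Ust\"unel's concentration inequality, and finish with Borel--Cantelli. The only cosmetic difference is that the paper applies the concentration bound once at scale $2n^H\sqrt{\log n}$ and then divides by $f(n)$, whereas you apply it at scale $\varepsilon f(n)$ for a countable family $\varepsilon_k\downarrow 0$; both work.
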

\begin{proof}
We will apply theorem \ref{Ustunel's thm} which provides concentration bounds on Malliavin derivable random variables.\\
For $X(\cdot)$, an arbitrary but fixed sample path of the random walk and $t \in \mathds{R}$, let $g_{t}^X: \mathds{R}\times \mathds{Z}^d \longrightarrow \mathds{R}$ be the function defined as
$$
g_{t}^X (s,x):= \mathbf{1}_{[0,t]}(s)\, \mathbf{1}_{X(s)}(x).
$$
With the notions introduced in Section \ref{preliminaries} it can be easily seen that $g_{t}^X$ is in $\mathcal{H}$ and moreover
$$
\textbf{B}(g_{t}^X)= \int_0^t \mathrm{d}B^{X(s)}_s\,,
$$
which shows that
$$
\nabla \int_0^t \mathrm{d}B^{X(s)}_s=g_{t}^X.
$$
Hence we have
$$
\nabla \widehat{u}(n)= \mathbb{E}^X\bigl[e^{ \int_0^n \mathrm{d}B^{X(s)}_s} \mathbf{1}_{\mathcal{A}_n} \, g_{n}^X\bigr]
$$
and
$$
\nabla \Bigl(\log \widehat{u}(n)\Bigr)= \frac{1}{\widehat{u}(n)} \nabla \widehat{u}(n)=\frac{1}{\widehat{u}(n)}\mathbb{E}^X\bigl[e^{ \int_0^n \mathrm{d}B^{X(s)}_s} \mathbf{1}_{\mathcal{A}_n} \, g_{n}^X\bigr]\,.
$$
For $X_1(\cdot)$ and $X_2(\cdot)$, independent random walks having the same law as $X(\cdot)$, we have
$$
\begin{aligned}
||\nabla \widehat{u}(n)||_{\mathcal{H}}^2
&=\Bigl\langle\mathbb{E}^X\bigl[e^{ \int_0^n \mathrm{d}B^{X(s)}_s} \mathbf{1}_{\mathcal{A}_n} \, g_{n}^X\bigr]\,,\,\mathbb{E}^X\bigl[e^{ \int_0^n \mathrm{d}B^{X(s)}_s} \mathbf{1}_{\mathcal{A}_n} \, g_{n}^X\bigr] \Bigr\rangle_\mathcal{H}\\
&=\Bigl\langle\mathbb{E}^{X_1}\bigl[e^{ \int_0^n \mathrm{d}B^{{X_1}(s)}_s} \mathbf{1}_{\mathcal{A}^1_n} \, g_{n}^{X_1}\bigr]\,,\,\mathbb{E}^{X_2}\bigl[e^{ \int_0^n \mathrm{d}B^{{X_2}(s)}_s} \mathbf{1}_{\mathcal{A}^2_n} \, g_{n}^{X_2}\bigr]
\Bigr\rangle_\mathcal{H}\\
&=\mathbb{E}^{X_1}\mathbb{E}^{X_2}\Bigl[e^{ \int_0^n \mathrm{d}B^{{X_1}(s)}_s} \mathbf{1}_{\mathcal{A}^1_n}e^{ \int_0^n \mathrm{d}B^{{X_2}(s)}_s} \mathbf{1}_{\mathcal{A}^2_n}\,\langle  g_{n}^{X_1}, \, g_{n}^{X_2}
\rangle_\mathcal{H}\Bigr]\\
&\leq \mathbb{E}^{X_1}\mathbb{E}^{X_2}\Bigl[e^{ \int_0^n \mathrm{d}B^{{X_1}(s)}_s} \mathbf{1}_{\mathcal{A}^1_n}e^{ \int_0^n \mathrm{d}B^{{X_2}(s)}_s} \mathbf{1}_{\mathcal{A}^2_n}\, || g_{n}^{X_1}||_\mathcal{H}\,  ||g_{n}^{X_2}||
_\mathcal{H}\Bigr]\\
&\leq \biggl(\mathbb{E}^{X}\Bigl(e^{ \int_0^n \mathrm{d}B^{{X}(s)}_s} \mathbf{1}_{\mathcal{A}_n} || g_{n}^{X}||_\mathcal{H}\Bigr)\biggr)^2.
\end{aligned}
$$
But we have
$$
|| g_{n}^{X}||_\mathcal{H}^2=\mathbb{E}\bigl(\int_0^n \mathrm{d}B^{{X}(s)}_s\bigr)^2.
$$
So for $H>1/2$ we have
$$
|| g_{n}^{X}||_\mathcal{H}^2\leq n^{2H}\,,
$$
and for $H\leq 1/2$ and under $\mathcal{A}_n$
$$
|| g_{n}^{X}||_\mathcal{H}^2\leq \mathfrak{N}_n (\frac{n}{\mathfrak{N}_n})^{2H}\leq n\,(\rho \kappa)^{1-2H}\,.
$$
The fact that $|| g_{n}^{X}||_\mathcal{H}$ has an upper bound that doesn't depend on the random walk leads to the following bound
$$
||\nabla \Bigl(\log \widehat{u}(n)\Bigr)||^2\leq || g_{n}^{X}||_\mathcal{H}^2.
$$
So by theorem \ref{Ustunel's thm} we have
$$
\mathrm{P}\Bigl(\bigl| \log \widehat{u}(n)-\widehat{U}(n)\bigr|> 2n^H \sqrt{\log n}\Bigr)\leq 2e^{-2\log n}=2n^{-2}.
$$
As the right-hand-side of this inequality is summable we can apply Borel-Cantelli lemma to conclude that almost surely there exists $N$ such that for any $n\in \mathds{N}$ with $n\geq N$ we have
$$
\bigl|\log \widehat{u}(n)-\widehat{U}(n)\bigr|\leq 2n^H \sqrt{\log n}\,,
$$
which along with the assumption on the growth rate of $f(\cdot)$ implies the almost sure limit
$$
\lim_{n \rightarrow \infty} \frac{\log \widehat{u}(n)}{f(n)}-\frac{\widehat{U}(n)}{f(n)}=0\qquad \text{almost surely}.
$$
\end{proof}

\begin{prop}
For any real positive function $f:\mathds{R}^{>0}\rightarrow \mathds{R}^{>0}$ which satisfies $\alpha|s-t|\leq|f(s)-f(t)|\leq \beta|s-t|^p$ for some fixed positive numbers $\alpha$, $\beta$ and $p$, we have
$$
\limsup_{t\rightarrow \infty}\frac{\log u(t)}{f(t)}=\limsup_{\substack{n\rightarrow \infty \\ n \in \mathds{N}}}\frac{\log \widehat{u}(n)}{f(n)}\qquad \text{almost surely},
$$
and
$$
\liminf_{t\rightarrow \infty}\frac{\log u(t)}{f(t)}=\liminf_{\substack{n\rightarrow \infty \\ n \in \mathds{N}}}\frac{\log \widehat{u}(n)}{f(n)}\qquad \text{almost surely}.
$$
\end{prop}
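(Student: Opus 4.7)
The plan is to mirror the two-step structure of the first proposition in Section \ref{Approximation}, promoting both the $u$-vs-$\widehat{u}$ comparison and the real-$t$-to-integer sandwich to almost-sure statements. Since $f(t) \geq \alpha t$ and $|f(t) - f(n)| \leq \beta$ for $|t-n| \leq 1$, any discrepancy of size $o(t)$ between the quantities under comparison becomes negligible after dividing by $f(t)$.

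For the first step I would promote the first part of the proof of the first proposition of Section \ref{Approximation} to an almost-sure statement. Trivially $\log \widehat{u}(n) \leq \log u(n)$; for the reverse direction I observe that the chain of Cauchy--Schwarz estimates leading to \eqref{distance of U and U hat for H greater than half} and \eqref{distance of U and U hat for H less than half} actually bounds the \emph{expected ratio} $\mathbb{E}[(u(t)-\widehat{u}(t))/\widehat{u}(t)]$ by $\mathcal{O}(e^{-t})$, not merely $U(t) - \widehat{U}(t)$. Markov's inequality combined with Borel--Cantelli along integers yields $0 \leq \log u(n) - \log \widehat{u}(n) \leq e^{-n/2}$ almost surely for large $n \in \mathds{N}$, and running the same argument along a dense countable subset of $\mathds{R}^{>0}$, combined with a standard continuity/monotonicity argument (using that $t\mapsto u(t)$ is continuous and $t\mapsto \widehat{u}(t)$ is right-continuous with only upward jumps), extends this to a comparable bound on $\log u(t) - \log \widehat{u}(t)$ for all large real $t$ almost surely.

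For the second step, I would sandwich $\log \widehat{u}(t)$ between $\log \widehat{u}(n)$ and $\log \widehat{u}(n+1)$ for $n := \lfloor t\rfloor$, using the no-jumps device exactly as in the second part of the proof of the first proposition of Section \ref{Approximation}. Conditioning the walk in $\widehat{u}(t)$ on having no jumps in $(n, t]$ yields the quenched analogue
$$
\log \widehat{u}(t) \geq \log \widehat{u}(n) - \kappa(t-n) + \min_{|x|\leq \mathfrak{N}_n}(B^x_t - B^x_n),
$$
with a symmetric upper bound obtained by placing the no-jumps event inside $\widehat{u}(n+1)$ on $(t, n+1]$. The required almost-sure input is $M_n := \sup_{|x|\leq \mathfrak{N}_{n+1},\,s\in[n,n+1]}|B^x_s - B^x_n| = \mathcal{O}(\sqrt{\log n})$, which follows by applying Dudley's theorem (Theorem \ref{Dudley theorem}) to bound $\mathbb{E}[M_n]$, Borell's inequality (Theorem \ref{Borell}) to control fluctuations around that mean, and Borel--Cantelli, using that the index set has polynomially many lattice points with uniformly bounded increment variance. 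Combined with the first step, this yields $|\log u(t) - \log \widehat{u}(\lfloor t\rfloor)| = \mathcal{O}(\sqrt{\log t})$ almost surely, and dividing by $f(t) \geq \alpha t$ gives the desired equalities of $\limsup$ and $\liminf$.

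The main obstacle is the first step: for Markov--Borel--Cantelli to deliver a quenched bound, one needs the stronger expected-ratio bound $\mathbb{E}[(u-\widehat{u})/\widehat{u}] = \mathcal{O}(e^{-t})$ rather than merely the expected log-difference $U(t) - \widehat{U}(t) = \mathcal{O}(e^{-t})$. Fortunately the Cauchy--Schwarz estimate from Section \ref{Approximation} already delivers the ratio bound as an intermediate quantity, so the first step amounts essentially to a Borel--Cantelli wrapper around the earlier estimates. A more direct alternative would be Malliavin concentration on $\log u(n)$ in the style of the previous proposition, but for $H > 1/2$ the relevant Malliavin derivative norm is not obviously bounded by a deterministic constant without the jump restriction $\mathcal{A}_n$, which makes that route less clean.
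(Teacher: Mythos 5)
Your Step~2 (the no-jump sandwich of $\log\widehat{u}(t)$ between $\log\widehat{u}(n)$ and $\log\widehat{u}(n+1)$, controlled by the supremum of fBm increments over $|x|\leq\mathfrak{N}_n$) is correct and matches the mechanism the paper uses. The genuine gap is in Step~1, in the claimed extension of the a.s.\ bound on $\log u(t)-\log\widehat{u}(t)$ from integers to all real $t$. Your expected-ratio observation is sound, and Markov plus Borel--Cantelli do yield the bound along $n\in\mathds{N}$. But the phrase ``running the same argument along a dense countable subset \dots combined with a standard continuity/monotonicity argument'' does not produce a proof. Setting $g(q):=(u(q)-\widehat{u}(q))/\widehat{u}(q)$, Markov gives only $\mathrm{P}(g(q)>\varepsilon_q)\curlyeqprec e^{-q}/\varepsilon_q$, and a dense set has infinitely many points in each $[n,n+1]$, so $\sum_q\mathrm{P}(g(q)>\varepsilon_q)$ diverges for every choice of $\varepsilon_q$; Borel--Cantelli does not apply. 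If instead you use a finite mesh of growing resolution in each $[n,n+1]$, Borel--Cantelli converges, but then you must interpolate between mesh points, and $g$ is neither monotone nor equipped with a useful a.s.\ modulus of continuity: moving $t$ inside $[n,n+1]$ changes the weights of paths with arbitrarily many jumps by factors $e^{B^{x}_t-B^{x}_s}$ over sites $x$ with no a priori bound on $|x|$, so there is no uniform control of $\log u$ on an interval from knowledge at a discrete mesh.

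The paper circumvents exactly this by doing Borel--Cantelli \emph{only} at integer times, and by decomposing $u(t)=\widehat{u}(t)+\sum_{k\geq 0}\widehat{u}_k(t)$ where $\widehat{u}_k$ restricts the walk to having between $\mathfrak{N}_n n^k$ and $\mathfrak{N}_n n^{k+1}$ jumps, with $n=\lceil t\rceil$. Each $\widehat{u}_k(t)$ is then pushed to $\widehat{u}_k(n)$ by the no-jump device, which now requires controlling $\sup_{|x|\leq\mathfrak{N}_n n^{k+1}}\sup_{n-1\leq s\leq n}(B^x_n-B^x_s)$; this supremum grows only like $(k+2)(d+1)\log n$ (paper's Step~1, via Dudley, Borell, and Borel--Cantelli over integers), and the $(k+2)\log n$ price is absorbed by the super-exponential Markov bound $\widehat{u}_k(n)\leq e^{-n^{k+2}}e^{-(k+1)(d+1)\log n}$ (paper's Step~2). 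This $k$-indexed range control is precisely what is missing from your continuity argument: without a cap on the jump count, the walk's range at time $t$ is unbounded, so there is no finite collection of fBm increments whose supremum dominates the oscillation of $u$ on $[n,n+1]$. To repair the proposal you would have to re-introduce a decomposition of this type, at which point you recover the paper's three-step proof. (As a side note, your concern about the Malliavin route for $H>1/2$ is not relevant here: the paper uses Malliavin concentration only in the preceding proposition, to compare $\log\widehat{u}(n)$ with $\widehat{U}(n)$, where the jump constraint is already in place and the derivative norm is bounded.)
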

\begin{proof}\hfill \break
\textbf{Step 1: }
For $l,n\in \mathds{N}^{\geq 1}$, let $\{t_i\}_{i=1}^{l}$ be the $l$ uniformly spaced points on the interval $(n-1,n)$. It is evident that for any $x\in \mathds{Z}^d$ and for any $t \in [n-1,n]$, there exists a $t_i$ with $|t-t_i|\leq \frac{1}{2l}$. Then we have
$$
\mathbb{E}\Bigl((B_{t}^x-B_{n}^x)-(B_{t_i}^x-B_{n}^x) \Bigr)^2=
\mathbb{E}\Bigl(B_{t}^x-B_{t_i}^x \Bigr)^2=\frac{1}{(2l)^{2H}}\,.
$$
So for $0<\varepsilon<2^{-H}$ we can cover the set $\{B_{t}^x-B_{n}^x \; ;\;\ t\in [n-1,n]\}$ by $l=\frac{1}{2\varepsilon^{1/H}}$ $\varepsilon$-balls and for $2^{-H} \leq \varepsilon$ the whole set can be covered by a single element. So by Dudley's theorem we have
$$
\mathbb{E} \Bigl(\sup_{n-1\leq t \leq n}(B_{t}^x-B_{n}^x) \Bigr) \leq K \int_{0}^{2^{-H} }\sqrt{\log \frac{1}{2\varepsilon^{1/H}}}=K_1\,,
$$
where $K$ and $K_1$ are some universal constants.\\
We also have $\mathbb{E}(B_{t}^x-B_{n}^x )^2 \leq 1$ for every $t\in[n-1,n]$. So by Borell's inequality \ref{Borell}, for any $k\in \mathds{N}$ and any $n$ large enough we have
$$
\begin{aligned}
&\mathrm{P}\Bigl(\sup_{n-1\leq t \leq n}(B_{t}^x-B_{n}^x)\geq (k+2)(d+1)\log n \,\Bigr)\\
&\qquad \qquad \qquad \leq e^{-2(k+2)(d+1)\log n}=n^{-2(k+2)(d+1)}\,.
\end{aligned}
$$
So
$$
\begin{aligned}
&\mathrm{P}\Bigl(\bigcup_{|x|\leq \mathfrak{N}_n n^k}\{\sup_{n-1\leq t \leq n}(B_{t}^x-B_{n}^x)\geq (k+2)(d+1)\log n\} \,\Bigr)\\
&\qquad \qquad \qquad \qquad \leq (2\mathfrak{N}_n n^k+1)^{d}n^{-(k+2)(d+1)}\leq n^{-(k+2)}\,,
\end{aligned}
$$
and hence
$$
\begin{aligned}
&\mathrm{P}\Bigl(\bigcup_{k\in \mathds{N}}\bigcup_{|x|\leq \mathfrak{N}_n n^k}\{\sup_{n-1\leq t \leq n}(B_{t}^x-B_{n}^x)\geq (k+2)(d+1)\log n\} \,\Bigr)\\
&\qquad \qquad \qquad \qquad \leq \sum_{k} n^{-(k+2)} \leq 2n^{-2}\,.
\end{aligned}
$$
By Borel-Cantelli lemma, almost surely there exists $N_1$ such that for any $n\geq N_1$ and for every $k\in \mathds{N}$ we have
$$
\sup_{|x|\leq \mathfrak{N}_n n^k}\sup_{n-1\leq t \leq n}(B_{t}^x-B_{n}^x)\leq (k+2)(d+1)\log n\,
$$
which is equivalent to
\begin{equation}\label{lower bound for Bn-Bt for x in Nn*n^k}
\inf_{|x|\leq \mathfrak{N}_n n^k}\inf_{n-1\leq t \leq n}(B_{n}^x-B_{t}^x)\geq -(k+2)(d+1)\log n\,.
\end{equation}
Using the same procedure we can easily show that almost surely there exists $N_2$ such that for any $n\geq N_2$ we have
\begin{equation}\label{lower bound for Bt-B(n-1) for x in Nn}
\inf_{|x|\leq \mathfrak{N}_n}\inf_{n-1\leq t \leq n}(B_{t}^x-B_{n-1}^x)\geq -\log n\,.
\end{equation}
\textbf{Step 2: }
For any given $t \in \mathds{R}^{>0}$ and $k \in \mathds{N}$, let $n:=\lceil t\rceil$ (the ceiling of $t$), i.e. the smallest integer not larger than or equal to $t$, and define $\mathcal{A}_{t,k}$ as the event that the number of jumps of the random walk on $[0,t]$ is larger than or equal to $\mathfrak{N}_{n} n^k$ but strictly less than $\mathfrak{N}_{n} n^{k+1}$. We use the following notations
\begin{equation}\label{definition of u-hat-k-t}
\widehat{u}_k(t):=\mathbb{E}^X\Bigl[e^{ \int_0^t \mathrm{d}B^{X(s)}_s}\mathbf{1}_{\mathcal{A}_{t,k}}\Bigr]\,,
\end{equation}
and
\begin{equation}\label{definition of u-hat-t}
\widehat{u}(t):=\mathbb{E}^X\Bigl[e^{ \int_0^t \mathrm{d}B^{X(s)}_s}\mathbf{1}_{\mathcal{A}_{t}}\Bigr]\,.
\end{equation}
For any given $n \in \mathds{N}^{\geq 1}$ and $k \in \mathds{N}$:\\
\underline{For $H> 1/2$} we have
$$
\mathbb{E}\widehat{u}_k(n)= \mathbb{E}^X \Bigl[\mathbf{1}_{\mathcal{A}_{n,k}}\mathbb{E}e^{ \int_0^n \mathrm{d}B^{X(s)}_s}\Bigr]\leq \mathrm{P}(\mathcal{A}_{n,k})e^{\frac{1}{2}n^{2H}}
$$
As in this case $\mathfrak{N}_{n}=n^2$, by the Poisson tail probability bound \eqref{poisson tail probability bound} we have
$$
\mathrm{P}(\mathcal{A}_{n,k})\leq (\frac{e \kappa n}{ n^{k+2}})^{ n^{k+2}}.
$$
\underline{For $H\leq 1/2$} we have
$$
\begin{aligned}
\mathbb{E}\widehat{u}_k(n)= \mathbb{E}^X \Bigl[\mathbf{1}_{\mathcal{A}_{n,k}}\mathbb{E}e^{ \int_0^n \mathrm{d}B^{X(s)}_s}\Bigr]
&\leq
\mathbb{E} \Bigl[\mathbf{1}_{\mathcal{A}_{n,k}}e^{ \frac{1}{2} J (\frac{n}{J})^{2H}}\Bigr]\\
&\leq
\mathrm{P}(\mathcal{A}_{n,k})e^{\frac{1}{2} \mathfrak{N}_n n^{k+1} (\frac{n}{\mathfrak{N}_n n^{k+1}})^{2H}}\,,
\end{aligned}
$$
where $J$ is the number of jumps of the random walk on $[0,n]$.\\
For this case $\mathfrak{N}_{n}=\lfloor\rho\kappa n\rfloor$, hence applying the Poisson tail probability bound \eqref{poisson tail probability bound} we have
$$
\mathrm{P}(\mathcal{A}_{n,k})\leq (\frac{e \kappa n}{ \rho \kappa n^{k+1}})^{ \rho \kappa  n^{k+1}}.
$$
\underline{So in both cases}, for $n$ large enough and every $k\in \mathds{N}$ we have
$$
\mathbb{E}\widehat{u}_k(n)\leq e^{-2n^{k+2}}.
$$
So by Markov's inequality, for $n$ large enough and every $k\in \mathds{N}$ we have
$$
\mathrm{P}\Bigl(\widehat{u}_k(n) \geq e^{-n^{k+2}}e^{-(k+1)(d+1)\log n}\Bigr)\leq n^{-(k+2)}\,,
$$
and hence
$$
\mathrm{P}\Bigl(\bigcup_{k\in \mathds{N}}\{\widehat{u}_k(n) \geq e^{-n^{k+2}}e^{-(k+1)(d+1)\log n}\}\Bigr)\leq 2n^{-2}\,.
$$
As the right hand side of this inequality is summable, Borel-Cantelli lemma implies that almost surely there exists $N_3$ such that for any $n\geq N_3$ and for any $k\in \mathds{N}$ we have
\begin{equation}\label{upper bound for u-hat-k-n}
\widehat{u}_k(n) \leq e^{-n^{k+2}}e^{-(k+1)(d+1)\log n}\,.
\end{equation}
\textbf{Step 3: }
Let $t\in \mathds{R}^{>0}$ be a given number with $t\geq \max\{N_1,N_2,N_3\}$. Define again $n:=\lceil t\rceil$.

For any $t_1, t_2\in \mathds{R}^{>0}$, let $\mathcal{C}_{t_1,t_2}$ be the event that the random walk has no jump in the time interval $[t_1, t_2]$. Using Equation \eqref{lower bound for Bn-Bt for x in Nn*n^k}, for any $k\in \mathds{N}$, we have
$$
\begin{aligned}
\widehat{u}_k(n)
&\geq \mathbb{E}^X\Bigl[e^{ \int_0^n \mathrm{d}B^{X(s)}_s}\mathbf{1}_{\mathcal{A}_{t,k}}\mathbf{1}_{\mathcal{C}_{t,n}}\Bigr]\\
&\geq e^{\inf_{|x|\leq \mathfrak{N}_n n^{k+1}}\inf_{n-1\leq t \leq n}(B_{n}^x-B_{t}^x)}\mathbb{E}^X\Bigl[e^{ \int_0^t \mathrm{d}B^{X(s)}_s}\mathbf{1}_{\mathcal{A}_{t,k}}\mathbf{1}_{\mathcal{C}_{t,n}}\Bigr]\\
&\geq e^{-(k+3)(d+1)\log n}\mathbb{E}^X\Bigl[e^{ \int_0^t \mathrm{d}B^{X(s)}_s}\mathbf{1}_{\mathcal{A}_{t,k}}\mathbf{1}_{\mathcal{C}_{t,n}}\Bigr]\\
&=e^{-(k+3)(d+1)\log n}\,\mathrm{P}(\mathcal{C}_{t,n})\, \widehat{u}_k(t)\,.
\end{aligned}
$$
Hence, using Equation \eqref{upper bound for u-hat-k-n}, we obtain the following inequality for any $k\in \mathds{N}$
\begin{equation}\label{upper bound for u-hat-k-t}
\begin{aligned}
\widehat{u}_k(t)
&\leq e^{\kappa} e^{(k+3)(d+1)\log n} \,\widehat{u}_k(n)
&\leq e^{-n^{k+3}}e^{\kappa} \leq e^{\kappa}e^{-n^{2}(k+1)}\,.
\end{aligned}
\end{equation}
In a similar way, using Equation \eqref{lower bound for Bt-B(n-1) for x in Nn} we get
\begin{equation}\label{Inequality between u-hat-t and u-hat-n}
\widehat{u}(t) \leq e^{\kappa} e^{\log n} \,\widehat{u}(n)=n e^{\kappa} \,\widehat{u}(n)\,,
\end{equation}
and
\begin{equation}\label{Inequality between u-hat-t and u-hat-(n-1)}
\widehat{u}(t) \geq e^{-\kappa}e^{-\log n} \,\widehat{u}(n-1)\,,
\end{equation}
which are valid for any $k\in \mathds{N}$.

So using Definitions \eqref{definition of u-hat-k-t} and \eqref{definition of u-hat-t}, and applying Inequalities \eqref{Inequality between u-hat-t and u-hat-(n-1)}, \eqref{upper bound for u-hat-k-t}, and \eqref{Inequality between u-hat-t and u-hat-n} we have
$$
u(t)=\widehat{u}(t)+\sum_{k=0}^{\infty} \widehat{u}_k(t)\leq n e^{\kappa} \,\widehat{u}(n)+e^{\kappa}\sum_{k=0}^{\infty}e^{-n^{2}(k+1)}\leq n e^{\kappa}  \,\widehat{u}(n)+e^{\kappa}e^{-n^{2}}
$$
So applying this inequality along with Equation \eqref{Inequality between u-hat-t and u-hat-(n-1)}, and noting the inequality $\log(\alpha+1)\leq \alpha$, we get
$$
\log \widehat{u}(n-1)-\delta_n \leq \log u(t) \leq \log \widehat{u}(n)+\Delta_n\,,
$$
where
$$
\Delta_n:
=\kappa+\log n+\frac{1}{n\,\widehat{u}(n) e^{n^{2}}} \;\quad \text{and} \;\quad \delta_n:=\kappa+\log n\,.
$$
In the next section, in Theorem \ref{Theorem of LowerBound}, we show that $\{\frac{\log \widehat{u}(n)}{n}\}_n$ converges to some strictly positive number (possibly $+\infty$ for $H>1/2$). Hence $\frac{\Delta_n}{f(n)}$ converges to zero as $n \rightarrow +\infty$. The convergence of $\frac{\delta_n}{f(n)}$ to zero as $n \rightarrow +\infty$ is also trivial. This completes the proof.
\end{proof}
\section{Lower Bound}\label{Lower Bound}
In this section we prove the positivity of $\lambda=\lim \frac{\widehat{U}(n)}{n}$ for any $H \in (0,1)$ and $\kappa>0$.

\begin{thm}\label{Theorem of LowerBound}
$\lambda=\lim_{n\rightarrow \infty} \frac{\widehat{U}(n)}{n}$ is strictly positive for every $H \in (0,1)$ and $\kappa >0$.
\end{thm}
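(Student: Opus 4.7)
Since Theorem \ref{limit on integers} already establishes the existence of $\lambda\in[0,+\infty]$, the task reduces to ruling out $\lambda=0$, i.e.\ exhibiting a strict positive linear lower bound $\widehat U(n)\ge c\,n$ along some sequence $n\to\infty$. The simplest attempt---restricting to the no-jump path $X\equiv 0$---only yields $\widehat u(n)\ge e^{-\kappa n+B^0_n}$ and hence $\widehat U(n)\ge -\kappa n$, because $\mathbb E\,B^0_n=0$. Any environment-independent single-path bound shares this defect: the field $B$ has mean zero, so to extract positivity the random walk must be allowed to select sites where $B$ happens to be favorable.

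The plan is to lower-bound $\widehat u(n)$ by a sum $\sum_{\gamma\in\Gamma_n}p_\gamma\,e^{\mathcal S_\gamma^n}$ over an exponentially large family $\Gamma_n$ of random walk trajectories with jump times on a regular grid of spacing $\delta=\delta(H,\kappa,d)$ and pairwise distinct visited sites. For such $\gamma$ each $\mathcal S_\gamma^n$ is a sum of independent fBm increments at distinct sites, hence a centered Gaussian of variance of order $n$. By tuning $\delta$ one arranges $|\Gamma_n|\asymp e^{c_1 n}$, $\min_\gamma\log p_\gamma\asymp -Cn$, and pairwise $L^2$-distances between distinct $\mathcal S_\gamma^n$ of order $\sqrt n$.

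Using $\log\sum_\gamma p_\gamma e^{\mathcal S_\gamma^n}\ge\min_\gamma\log p_\gamma+\max_{\gamma\in\Gamma_n}\mathcal S_\gamma^n$, I would then invoke Sudakov's minoration (the lower-bound companion of Dudley's Theorem \ref{Dudley theorem}) applied to the Gaussian family $\{\mathcal S_\gamma^n\}_{\gamma\in\Gamma_n}$, which gives $\mathbb E\,\max_\gamma\mathcal S_\gamma^n\ge c_2\sqrt n\cdot\sqrt{\log|\Gamma_n|}\ge c_2\sqrt{c_1}\,n$. This yields $\widehat U(n)\ge(c_2\sqrt{c_1}-C)\,n$, and a strictly positive $c$ is obtained by optimizing $\delta$ so that the Sudakov gain dominates the Poisson/combinatorial cost. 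Theorem \ref{limit on integers} then forces $\lambda\ge c>0$.

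The main obstacle is this quantitative balancing: verifying $c_2\sqrt{c_1}>C$ uniformly in $H\in(0,1)$ and $\kappa>0$. For $H$ close to $0$ the per-interval fBm variance scales as $\delta^{2H}$ and is small, so that the Lipschitz-type estimates of Proposition \ref{Holder continuity of Y for H smaller than half} are needed to control the temporal correlations of fBm increments at a fixed site across the grid (nontrivial when $H\ne 1/2$) and hence the precise pairwise $L^2$-distances of the $\mathcal S_\gamma^n$. Heuristically the optimal $\delta$ is long for $H>1/2$ (to exploit long-range positive correlations and boost variance from a stationary walk) and short for $H\le 1/2$ (to accumulate many independent contributions from distinct sites, taking advantage of the negative correlation), and the construction of $\Gamma_n$ must be tailored accordingly.
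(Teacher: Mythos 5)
Your proposal is a genuinely different strategy from the paper's, substituting Sudakov minoration for the paper's reflection--pairing argument. But the balancing you flag as the ``main obstacle'' is in fact a genuine gap, not a finishing technicality. Splitting $\log\sum_\gamma p_\gamma e^{\mathcal{S}_\gamma^n}\geq\min_\gamma\log p_\gamma+\max_\gamma\mathcal{S}_\gamma^n$ forces you to pay $-\min_\gamma\log p_\gamma$ in full. If each $\gamma\in\Gamma_n$ jumps once per grid slot of length $\delta$ in a prescribed direction, then $-\log p_\gamma=\kappa n+\tfrac{n}{\delta}\bigl(\log(2d)-\log(\kappa\delta)\bigr)$, which contains an unavoidable Poisson term of order $\kappa n$. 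Sudakov, applied to a family that branches during the first half of $[0,n]$ and then continues deterministically (so that pairwise $L^2$-distances are $\asymp\sqrt{n\,\delta^{2H-1}}$), gives a gain of order $n\,\delta^{H-1}$, with the absolute Sudakov constant in front. Writing $\delta=a/\kappa$, the needed inequality reduces to $c\,a^{H-1}\kappa^{-H}\gtrsim 1+\tfrac{1}{a}\log(2d/a)$, which fails for $\kappa$ large regardless of $a$; sending $\delta\downarrow 0$ also fails, because the combinatorial contribution $\delta^{-1}\log(2d)$ dominates $\delta^{H-1}$ for every $H\in(0,1)$. So the $\min+\max$ route via Sudakov does not close, even for a single fixed $H,\kappa,d$, let alone uniformly.

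The paper circumvents exactly this issue. It works on a single block $[0,T]$ with $T=2md/\kappa$, conditions on the structured event $\mathcal{B}_0$ (each coordinate makes exactly $2m$ jumps and first returns to zero at its $2m$th jump), whose probability is only \emph{polynomially} small in $m$ rather than exponentially small in $T$, and pushes Jensen through the outer average over jump times and the admissible skeleton class $\mathfrak{D}$ rather than dropping to a worst-case $\log p_\gamma$. The positive gain is produced not by Sudakov but by pairing each skeleton $j\in\mathfrak{D}$ with $-j$ and exploiting $\mathbb{E}\log(e^{Y_1}+e^{Y_2})\geq\mathbb{E}\max\{Y_1,Y_2\}=\sigma/\sqrt{\pi}$ for i.i.d.\ centered Gaussians. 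This yields a per-block gain $\asymp m^H$ ($H\le 1/2$) or $\asymp\sqrt m$ ($H>1/2$) against a per-block cost $\asymp\log m$, so positivity of $\lambda$ follows for $m$ large with no delicate optimization over $\delta,\kappa,d$. If you want to retain a Sudakov flavor, the essential correction is to emulate this structure: condition on a polynomially likely event (a typical jump count together with a combinatorially abundant skeleton class), apply Jensen to the outer average rather than paying $\min_\gamma\log p_\gamma$, and only then extract a maximum inside.
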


The following well-known property of simple random walk on $\mathds{Z}$ plays a vital role in our argument (for a proof see e.g. \cite{Grinstead}).

\begin{lem}[First return to the origin]\label{first retuern to origin}
Let $\{S_n\}_n$ be a discrete-time random walk on $\mathds{Z}$ starting off the origin, i.e. $S_n=\sum_{k=1}^{n}X_k$ where $X_i \in \{-1,+1\}$ and $S_0=0$. Let $\nu_{2m}$ be number of different paths for the random walk to visit the origin for the first time at time $2m$, i.e. $S_{2m}=0$ but $S_{k}\neq 0$ for every $k \in \{1, \cdots, 2m-1\}$. The we have
$$
\nu_{2m}=\frac{1}{2m-1}\binom{2m}{m}
$$
\end{lem}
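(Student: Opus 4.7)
The plan is to reduce the count to Catalan numbers by a standard path-decomposition, and then verify a brief algebraic identity. First I would exploit the sign-reversal symmetry $X_k \mapsto -X_k$: it induces a bijection between the first-return paths that stay strictly positive on $\{1,\ldots, 2m-1\}$ and those that stay strictly negative on the same set. Writing $\mu_{2m}$ for the number of first-return paths of the first kind, this symmetry gives $\nu_{2m} = 2\mu_{2m}$.

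Next, for a path counted by $\mu_{2m}$ we must have $X_1 = +1$ and $X_{2m} = -1$, so $S_1 = S_{2m-1} = 1$, and the interior trajectory $(S_1, \ldots, S_{2m-1})$ is a nearest-neighbor walk of length $2m-2$ from $1$ to $1$ staying $\ge 1$. Translating this interior trajectory downward by $1$ establishes a bijection with Dyck paths of length $2m-2$, i.e.\ $\pm 1$ walks from $0$ to $0$ staying $\ge 0$. These are counted by the Catalan number
$$
C_{m-1} = \frac{1}{m}\binom{2m-2}{m-1},
$$
so $\mu_{2m} = C_{m-1}$.

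Finally, the elementary identity $\binom{2m}{m} = \frac{2m(2m-1)}{m^2}\binom{2m-2}{m-1}$ yields
$$
\nu_{2m} = 2 C_{m-1} = \frac{2}{m}\binom{2m-2}{m-1} = \frac{1}{2m-1}\binom{2m}{m},
$$
which is the claimed formula. The only substantive step is the bijection with Dyck paths; the symmetry argument and the final algebraic manipulation are both mechanical. An alternative route would use generating functions: the first-passage decomposition gives $U(z) = 1 + A(z) U(z)$ with $U(z) = \sum_m \binom{2m}{m} z^m = (1-4z)^{-1/2}$, so $A(z) = 1 - \sqrt{1-4z}$, and extracting the $z^m$ coefficient of this square root recovers the same formula; but the bijective argument above is more transparent and matches the style of \cite{Grinstead} cited in the statement.
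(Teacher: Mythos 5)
Your proof is correct. Note, however, that the paper does not prove this lemma at all: it states it as a known fact and refers to \cite{Grinstead}, where the formula is obtained essentially by the generating-function route you sketch as an alternative (the first-passage decomposition $U(z)=1+A(z)U(z)$ with $U(z)=(1-4z)^{-1/2}$, followed by coefficient extraction from $1-\sqrt{1-4z}$). Your main argument is a genuinely different, self-contained combinatorial route: the sign-flip symmetry giving $\nu_{2m}=2\mu_{2m}$, the observation that $X_1=+1$, $X_{2m}=-1$ forces the interior trajectory to be a walk from $1$ to $1$ of length $2m-2$ staying $\geq 1$, and the downward shift identifying it with a Dyck path of semilength $m-1$, counted by $C_{m-1}=\frac{1}{m}\binom{2m-2}{m-1}$; the final identity $2C_{m-1}=\frac{2}{m}\binom{2m-2}{m-1}=\frac{1}{2m-1}\binom{2m}{m}$ is verified correctly (and checks at $m=1,2$). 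The only ingredient you take on faith is the Catalan count of Dyck paths, which is standard (reflection principle or the cycle lemma) and is a fair thing to cite at this level of the paper; if one wanted the argument fully self-contained, a one-line reflection-principle count of the interior paths would close even that. The bijective route is arguably more transparent than the cited generating-function derivation, while the latter generalizes more readily to full first-passage distributions; either is adequate for the way the lemma is used in Section 6 of the paper.
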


\begin{proof}[Proof of Theorem \ref{Theorem of LowerBound}]
For the $d$-dimensional simple random walk $X(\cdot)$ on $\mathds{Z}^d$, Let $\pi_i$ be the projection to the $i$-th coordinate; In other words if $X=(x_i)_i$, then for each $i$ we have $x_i:=\pi_i \mathrm{o} X$.

Let $ T:= 2 m d/{\kappa}$ with $m \in \mathds{N}^{\geq 1}$. For any $k\in \mathds{N}$, let $\mathcal{B}_k$ be the event that the random walk $X(\cdot)$ has the following property: for every $i \in \{1, \cdots, d\}$, the $i$-th projection, i.e. $\pi_i \mathrm{o} X$ be zero at time $k T$, make exactly $2 m$ jumps in the time interval $\bigl( k T,(k+1) T\bigr)$ and at its $2m$-th jump returns to zero for the first time. It is clear that then for each $i$, $\pi_i \mathrm{o} X$ doesn't change sign in the time interval $\bigl(k T,(k+1)T\bigr)$.
We have
$$
\frac{\widehat{U}(nT)}{nT}
\geq \,\frac{1}{nT}\,\mathbb{E} \log \mathbb{E}^X \Bigl(e^{\int_0^{nT}\mathrm{d}B^{X(s)}_s}\prod_{k=0}^{n-1}\mathbf{1}_{\mathcal{B}_k}\Bigr)\,.
$$
Using Markov property inductively, we have
$$
\mathbb{E} \log \mathbb{E}^X \Bigl(e^{\int_0^{nT}\mathrm{d}B^{X(s)}_s}\prod_{k=0}^{n-1}\mathbf{1}_{\mathcal{B}_k}\Bigr)
=\sum_{k=0}^{n-1}\mathbb{E} \log \mathbb{E}^X \Bigl(e^{\int_{kT}^{(k+1)T}\mathrm{d}B^{X(s)}_s}\mathbf{1}_{\mathcal{B}_k}\,\Bigl|\, X(kT)=0\Bigr)\,.
$$
So we have
$$
\begin{aligned}
\frac{\widehat{U}(nT)}{nT}
&\geq \frac{1}{nT}\sum_{k=0}^{n-1}\mathbb{E} \log \mathbb{E}^X \Bigl(e^{\int_{kT}^{(k+1)T}\mathrm{d}B^{X(s)}_s}\mathbf{1}_{\mathcal{B}_k}\,\Bigl|\, X(kT)=0\Bigr)\\
&= \frac{1}{T}\mathbb{E} \log \mathbb{E}^X \Bigl(e^{\int_{0}^{T}\mathrm{d}B^{X(s)}_s}\mathbf{1}_{\mathcal{B}_0}\Bigr)\,
\end{aligned}
$$
where we have used the time invariance of the random walk and the random environment, i.e. the fBm's.\\
Taking the limit when $n$ goes to $\infty$ we obtain
$$
\lambda \geq \frac{1}{T}\mathbb{E} \log \mathbb{E}^X \Bigl(e^{\int_{0}^{T}\mathrm{d}B^{X(s)}_s}\mathbf{1}_{\mathcal{B}_0}\Bigr).
$$
So it suffices to show the positivity of the right-hand-side of this inequality.

Let $\mathfrak{D}$ be the set of all possible paths of a $2md$-step discrete-time random walk on $\mathds{Z}^d$ started at the origin with the property that its projection over each coordinate makes exactly $2m$ jumps the last of which (the $2m$'th jump) is a first return to the zero site of that coordinate.
As $\mathcal{B}_0$ is an event that concerns only the number and direction of jumps of the random walk, NOT its jump times, conditional on the number of jumps it is independent of the jump times. Let $\mathbb{E}^{\mathbf{t}}$ denote the expectation with respect to the jump times when the number of jumps is $2md$, i.e. expectation with respect to the jump times $t_1, \cdots, t_{2md}$ distributed uniformly on a $2md$-dimensional simplex (in other words, $t_1, \cdots, t_{2md}$ is the ascending list of ${2md}$ uniformly distributed points on $(0,T)$). Let also $p_{m}$ be the probability that a simple random walk has $2md$ jumps in the time interval $[0,T]$.\\
We have
$$
\mathbb{E}^X \Bigl(e^{\int_{0}^{T}\mathrm{d}B^{X(s)}_s}\mathbf{1}_{\mathcal{B}_0}\Bigr)
=p_m \frac{1}{(2d)^{2md}} \sum_{j\in \mathfrak{D}} \mathbb{E}^{\mathbf{t}} \Bigl(e^{\int_{0}^{T}\mathrm{d}B^{X_j(s)}_s}\Bigr)\,.
$$
where $X_j$ is a continuous-time random walk $\mathds{Z}^d$ whose skeleton (i.e. the sequence of the sites it visits) is the same as $j\in \mathfrak{D}$. For each path $j$ in $\mathfrak{D}$ it is evident that $-j \in \mathfrak{D}$. So let $\mathfrak{D}/2$ be a subset of $\mathfrak{D}$ with the property that from each pair $(j, -j)$ contains only one; In other words it is the equivalence class of $\mathfrak{D}$ under the relation $j\sim i \; \Longleftrightarrow \; j=\pm i$. Then we have
$$
\mathbb{E}^X \Bigl(e^{\int_{0}^{T}\mathrm{d}B^{X(s)}_s}\mathbf{1}_{\mathcal{B}_0}\Bigr)
=p_m \frac{1}{(2d)^{2md}} \sum_{j\in \mathfrak{D}/2} \mathbb{E}^{\mathbf{t}} \Bigl(e^{\int_{0}^{T}\mathrm{d}B^{X_j(s)}_s}+e^{\int_{0}^{T}\mathrm{d}B^{-X_{j}(s)}_s}\Bigr)\,,
$$
hence
$$
\begin{aligned}
&\mathbb{E} \log \mathbb{E}^X \Bigl(e^{\int_{0}^{T}\mathrm{d}B^{X(s)}_s}\mathbf{1}_{\mathcal{B}_0}\Bigr)\\
&\qquad \qquad =\log p_m +\mathbb{E} \log \frac{1}{(2d)^{2md}} \sum_{j\in \mathfrak{D}/2} \mathbb{E}^{\mathbf{t}} \Bigl(e^{\int_{0}^{T}\mathrm{d}B^{X_j(s)}_s}+e^{\int_{0}^{T}\mathrm{d}B^{-X_{j}(s)}_s}\Bigr)\\
&\qquad \qquad \geq \log p_m + \frac{2}{|\mathfrak{D}|} \sum_{j\in \mathfrak{D}/2}\mathbb{E}^{\mathbf{t}}\mathbb{E}\log \frac{|\mathfrak{D}|}{(2d)^{2md+1}} \Bigl(e^{\int_{0}^{T}\mathrm{d}B^{X_j(s)}_s}+e^{\int_{0}^{T}\mathrm{d}B^{-X_{j}(s)}_s}\Bigr)\\
&\qquad \qquad = \log p_m + \log \frac{|\mathfrak{D}|}{(2d)^{2md+1}}  + \frac{2}{|\mathfrak{D}|} \sum_{j\in \mathfrak{D}/2}\mathbb{E}^{\mathbf{t}}\mathbb{E}\log \Bigl(e^{\int_{0}^{T}\mathrm{d}B^{X_j(s)}_s}+e^{\int_{0}^{T}\mathrm{d}B^{-X_{j}(s)}_s}\Bigr)\,.
\end{aligned}
$$
If $Y_1:=\int_{t_1}^{t_{2md}}\mathrm{d}B^{X_{j}(s)}_s$  and $Y_2:=\int_{t_1}^{t_{2md}}\mathrm{d}B^{-X_{j}(s)}_s$ we have
$$
\begin{aligned}
e^{\int_{0}^{T}\mathrm{d}B^{X_j(s)}_s}+e^{\int_{0}^{T}\mathrm{d}B^{-X_{j}(s)}_s}
&=e^{\int_{0}^{t_1}\mathrm{d}B^{X_j(s)}_s+\int_{t_{2md}}^{T}\mathrm{d}B^{X_j(s)}_s}
(e^{Y_1}+e^{Y_2})\\
&\geq e^{\int_{0}^{t_1}\mathrm{d}B^{X_j(s)}_s+\int_{t_{2md}}^{T}\mathrm{d}B^{X_j(s)}_s}
e^{\max\{Y_1,Y_2\}}.
\end{aligned}
$$
As $Y_1$ and $Y_2$ are independent identically-distributed zero-mean normal random variables we have
$$
\mathbb{E} \max\{Y_1,Y_2\} =\mathbb{E}\bigl( \frac{|Y_1-Y_2|+Y_1+Y_2}{2}\bigr)=\frac{\sigma}{\sqrt{\pi}}\,,
$$
where $\sigma^2$ is the variance of $Y_1$.
So we have
$$
\mathbb{E}^{\mathbf{t}}\mathbb{E}\log \Bigl(e^{\int_{0}^{T}\mathrm{d}B^{X_j(s)}_s}+e^{\int_{0}^{T}\mathrm{d}B^{-X_{j}(s)}_s}\Bigr)
\geq \mathbb{E}^{\mathbf{t}}(\sigma/\sqrt{\pi})\,.
$$
Let $\Delta:=t_1+(T-t_{2md})$, i.e. the total amount of time that the random walk spends at the origin during the time interval $[0,T]$. As $t_1, \cdots\, t_{2md}$ are uniformly distributed in $(0,T)$, it is clear that $\mathbb{E}^{\mathbf{t}}(\Delta)=2\frac{T}{2md+1}$.\\
$\bullet$ When $H\leq 1/2$, as the increments are negatively correlated (property \ref{positive or negative correlation}), staying in a single site gives a lower bound on the variance, i.e. $\sigma^2 \geq (T-\Delta)^{2H}$. Using the inequality $\alpha^{H} \geq \bigl(\frac{\alpha}{T}\bigr)\,T^{H}$ which holds for any $0\leq \alpha \leq T$ and $0<H<1$
we have $\sigma \geq \bigl(\frac{T-\Delta}{T}\bigr)\,T^{H}$. Hence
$$
\mathbb{E}^{\mathbf{t}}(\sigma)
\geq \mathbb{E}^{\mathbf{t}}\bigl(\frac{T-\Delta}{T}\bigr)\,T^{H} =
\frac{2md-1}{2md+1} \,T^{H} \asymp m^{H}\,.
$$
$\bullet$ When $H> 1/2$, as the increments are positively correlated (property \ref{positive or negative correlation}), visiting every site no more than once gives a lower bound on the variance, i.e. $\sigma^2 \geq \sum_{i=2}^{2md} (t_{i}-t_{i-1})^{2H}$. Also note that  the function $x^{2H}$ is convex for $H> 1/2$ and hence for any $\alpha_i \geq 0$, $i=1, \cdots, N$ we have
$$
\sum_{i=1}^{N} \alpha_i^{2H} \geq N\bigl(\frac{1}{N}\sum_{i=1}^{N} \alpha_i\bigr)^{2H}.
$$
So we have
$$
\begin{aligned}
\mathbb{E}^{\mathbf{t}}(\sigma)
&\geq \mathbb{E}^{\mathbf{t}}\sqrt{\sum_{i=2}^{2md}(t_{i}-t_{i-1})^{2H}}\\
&\geq \mathbb{E}^{\mathbf{t}}\sqrt{(2md-1)^{1-2H}\bigl(\sum_{i=2}^{2md}(t_{i}-t_{i-1})\bigr)^{2H}}\\
&= (2md-1)^{1/2-H}\,\mathbb{E}^{\mathbf{t}}\bigl(\sum_{i=2}^{2md}(t_{i}-t_{i-1})\bigr)^{H}\\
&\geq (2md-1)^{1/2-H}\,\mathbb{E}^{\mathbf{t}}\bigl(\frac{\sum_{i=2}^{2md}(t_{i}-t_{i-1})}{T}\bigr)\,T^{H}\\
&\geq (2md-1)^{1/2-H}\,\bigl(\frac{2md-1}{2md+1}\bigr)\,T^{H}\asymp \sqrt{m}\,.
\end{aligned}
$$
where we have used once again the inequality $\alpha^{H} \geq \bigl(\frac{\alpha}{T}\bigr)\,T^{H}$.

Hence we showed that
$$
\mathbb{E} \log \mathbb{E}^X \Bigl(e^{\int_{0}^{T}\mathrm{d}B^{X(s)}_s}\mathbf{1}_{\mathcal{B}_0}\Bigr)
\geq \log p_m + \log \frac{|\mathfrak{D}|}{(2d)^{2md+1}}  + C m^{\gamma} \,,
$$
where $C$ is some positive constant and $\gamma:=1/2$ for $H>1/2$ and $\gamma:=H$ for $H\leq1/2$.

$p_m$ is the probability that a Poisson random variable of mean $\kappa T= 2md$ has $2md$ jumps. So by Stirling's formula \cite{Robbins} we have
$$
p_m=e^{-2md}\frac{(2md)^{2md}}{(2md)!}\geq \frac{1}{2e\sqrt{\pi m d}}\,
$$
hence
$$
\log p_m \asymp -\log m\,.
$$

For determining $|\mathfrak{D}|$, first we notice that there are $\binom{2md}{2m \,\cdots \,2m}=\frac{(2md)!}{(2m)!^d}$ different ways of distributing the $2md$ jumps uniformly between the $d$ coordinates. For each $i=1, \cdots, d$, there are $\nu_{2m}$ different possible excursions for $\pi_i \mathrm{o} X$ such that it starts from zero, makes $2d$ jumps and at its $2d$-th jump returns to zero for the first time. So we have
$$
|\mathfrak{D}|=\frac{(2md)!}{(2m)!^d} \nu_{2m}^{d}=\frac{(2md)!}{(2m)!^d}\frac{(2m)!^d}{(m)!^{2d}}\frac{1}{(2m-1)^d}
=\frac{(2md)!}{(m)!^{2d}}\frac{1}{(2m-1)^d}.
$$
Again, by Stirling's formula we have
$$
\frac{(2md)!}{(m)!^{2d}} \asymp\frac{(2d)^{2md}}{m^{2d-1/2}},
$$
and hence
$$
\log \frac{|\mathfrak{D}|}{(2d)^{2md+1}} \asymp -\log m\,.
$$

This shows that
$$
\mathbb{E} \log \mathbb{E}^X \Bigl(e^{\int_{0}^{T}\mathrm{d}B^{X(s)}_s}\mathbf{1}_{\mathcal{B}_0}\Bigr)
\geq -C_1 \log m + C m^{\gamma} \,,
$$
which guarantees the positivity of this expression for $m$ large enough and hence completing the proof.
\end{proof}
\section{Upper Bound}\label{Upper Bounds}
In this section we establish an upper  bound on $\widehat{U}(T)$. For $H\leq 1/2$, we obtain an upper bound that is linear in $T$, which shows that $\lambda$ is finite.
For $H\geq 1/2$ the problem is much more complicated and we have only been able to prove that $\widehat{U}(T)$ and hence $U(T)$ grow at most like $T\sqrt{\log(T)}$.


\begin{thm}
For $H\leq 1/2$, the limit $\lim_{T\rightarrow\infty} \frac{\widehat{U}(T)}{T}=\lambda$ is finite.
\end{thm}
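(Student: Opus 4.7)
The plan is to obtain the finiteness via the classical \emph{annealed} upper bound. Since $\lim_{n\to\infty} \widehat{U}(n)/n$ already exists in $[0,+\infty]$ by Theorem \ref{limit on integers}, and is known to be strictly positive by Theorem \ref{Theorem of LowerBound}, it suffices to produce a deterministic linear upper bound of the form $\widehat{U}(T) \leq C T$ for all $T$ large enough.

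First I would apply Jensen's inequality to the outer expectation:
$$
\widehat{U}(T) = \mathbb{E}\log \widehat{u}(T) \;\leq\; \log \mathbb{E}\,\widehat{u}(T) \;=\; \log \mathbb{E}^X\Bigl[\mathbf{1}_{\mathcal{A}_T}\,\mathbb{E}\, e^{\int_0^T \mathrm{d}B^{X(s)}_s}\Bigr],
$$
where the swap of expectations is justified by Fubini. Conditional on the random walk path $X$, the integral $\mathcal{S}_X^T := \int_0^T \mathrm{d}B^{X(s)}_s$ is a centered Gaussian random variable, so its Laplace transform is $\mathbb{E}\,e^{\mathcal{S}_X^T} = e^{\frac{1}{2}\mathrm{var}(\mathcal{S}_X^T)}$. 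Thus
$$
\widehat{U}(T) \;\leq\; \log \mathbb{E}^X\Bigl[\mathbf{1}_{\mathcal{A}_T}\,e^{\frac{1}{2}\mathrm{var}(\mathcal{S}_X^T)}\Bigr] \;\leq\; \tfrac{1}{2}\sup_{X\in \mathcal{A}_T}\mathrm{var}(\mathcal{S}_X^T).
$$

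The second step is to control that supremum using $H\leq 1/2$. Because increments of fBm over disjoint intervals are negatively correlated for $H\leq 1/2$ (Property \ref{positive or negative correlation}), exactly the same reasoning already used in Section \ref{Approximation} gives, if $X$ has jump times $0=t_0<t_1<\cdots<t_N<t_{N+1}=T$,
$$
\mathrm{var}(\mathcal{S}_X^T) \;\leq\; \sum_{i=0}^{N}(t_{i+1}-t_i)^{2H} \;\leq\; (N+1)^{1-2H}\,T^{2H},
$$
by concavity of $x\mapsto x^{2H}$ (Jensen). On the event $\mathcal{A}_T$ we have $N+1 \leq \mathfrak{N}_T + 1 \leq \rho\kappa T + 1$, so
$$
\mathrm{var}(\mathcal{S}_X^T)\;\leq\; (\rho\kappa T+1)^{1-2H}\,T^{2H} \;\leq\; 2(\rho\kappa)^{1-2H}\,T
$$
for $T$ sufficiently large. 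Combining with the previous display yields $\widehat{U}(T)/T \leq (\rho\kappa)^{1-2H}$ for large $T$, which forces the already-existing limit $\lambda$ to be finite.

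There is no real obstacle here; the result is essentially immediate once one writes down the annealed bound and uses the $H\leq 1/2$ variance estimate that motivated the choice $\mathfrak{N}_T = \lfloor \rho\kappa T\rfloor$ in the first place. The reason the argument collapses for $H>1/2$, and why the theorem is restricted to $H\leq 1/2$, is precisely that the positive correlation of fBm increments makes $\mathrm{var}(\mathcal{S}_X^T)$ as large as $T^{2H}$ regardless of how many jumps $X$ takes, so the annealed bound only gives $\widehat{U}(T)\leq \tfrac{1}{2}T^{2H}$, which is super-linear and insufficient.
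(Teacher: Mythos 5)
Your proof is correct and follows essentially the same route as the paper: both apply Jensen's inequality to pass $\log$ inside the expectation, compute the Gaussian Laplace transform $\mathbb{E}e^{\mathcal{S}_X^T}=e^{\frac12\var(\mathcal{S}_X^T)}$, bound the variance via negative correlation of fBm increments together with concavity of $x\mapsto x^{2H}$, and use $N\leq\mathfrak{N}_T\asymp\rho\kappa T$ on $\mathcal{A}_T$ to produce a linear-in-$T$ bound. The only cosmetic difference is that you take a supremum of the variance over paths in $\mathcal{A}_T$ before concluding, whereas the paper carries the expectation $\mathbb{E}^X$ through to the end; the estimates and constants match up to a typo in the paper's $\mathfrak{N}_T$.
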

\begin{proof}
By convexity of $\log$ and using Jensen's inequality and then by the negative correlation of the fBms' increments (property \ref{positive or negative correlation}), we have
$$
\begin{aligned}
\widehat{U}(T)
&\leq \log \mathbb{E}^X \Bigl(\mathbb{E}\,e^{\int_0^T \mathrm{d}B_s^{X(s)}}\mathbf{1}_{\mathcal{A}_T}\Bigr) \\
&= \log \mathbb{E}^X \Bigl(\, e^{\frac{1}{2} \,var( \int_0^T \mathrm{d}B_s^{X(s)})} \mathbf{1}_{\mathcal{A}_T}\Bigr)\\
&\leq \log \mathbb{E}^X \Bigl(\, e^{\frac{1}{2} \,\sum_{i=0}^{n}(t_{i+1}-t_i)^{2H}}\mathbf{1}_{\mathcal{A}_T}\Bigr)\,,
\end{aligned}
$$
where $\{t_i\}_{i}$ are the jump times of the random walk $X(\cdot)$ in $(0,T)$, including the end points, and $n$ is the number of jumps. Then as the function $x^{2H}$ is concave, by Jensen's inequality we have
$$
\frac{1}{n+1} \sum_i (\triangle t_i)^{2H} \leq \Bigl(\frac{\sum_i \triangle t_i}{n+1}\Bigr)^{2H}=\Bigl(\frac{T}{n+1}\Bigr)^{2H}.
$$
But under the event $\mathcal{A}_T$, the number of jumps is smaller than $ \mathfrak{N}_T=\rho T$. So
$$
\begin{aligned}
\widehat{U}(T)
&\leq \log \mathbb{E}^X \Bigl(\, e^{\frac{1}{2}(n+1)^{1-2H}T^{2H}}\mathbf{1}_{\mathcal{A}_T}\Bigr)\\
&\leq \log \mathbb{E}^X \Bigl(\, e^{\frac{1}{2}(\rho T+1)}\mathbf{1}_{\mathcal{A}_T}\Bigr)\\
&\leq \frac{1}{2}(\rho T+1).
\end{aligned}
$$
This shows that $\lambda =\lim_{T\rightarrow\infty} \frac{\widehat{U}(T)}{T}$ is finite.
\end{proof}

When $H>1/2$, we apply a more elaborate method.
\begin{thm}\label{theorem on the upperbound for U}
For $H> 1/2$, we have $\widehat{U}(n)\curlyeqprec n \sqrt{\log n}$.
\end{thm}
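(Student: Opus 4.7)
The plan is to establish a per-step upper bound
\begin{equation*}
\widehat{U}(n+1) \leq \widehat{U}(n) + C_{\kappa,H}\sqrt{\log(n+1)},
\end{equation*}
and then telescope: summing from $1$ to $n$ yields $\widehat{U}(n) \leq C_{\kappa,H}\sum_{k=1}^{n}\sqrt{\log k} \curlyeqprec n\sqrt{\log n}$, as claimed.

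To derive the per-step inequality, I would recycle the construction of Section~\ref{Super-additivity}, with the crucial difference that suprema replace infima at every Gaussian-maximum step. Specifically, using the Markov property of the random walk at time $n$, the time-shift invariance of the fBm field, and a buffer event $\mathcal{C}$ (no jump of $X$ on a suitable auxiliary interval) to decouple past from future, I write
\begin{equation*}
\widehat{u}(n+1) \leq \frac{\widehat{u}(n)}{\mathrm{P}^X(\mathcal{C})}\,\bar{\mathbb{E}}^Y\Bigl(\mathbb{E}^{X'}\bigl[e^{\int_n^{n+1}\mathrm{d}B^{X'(s)+Y}_s}\mathbf{1}_{\mathcal{B}}\bigr]\Bigr),
\end{equation*}
where $\bar{\mathbb{E}}^Y$ is expectation against the time-$n$ polymer endpoint (exactly as in Section~\ref{Super-additivity}), $X'$ is an independent continuous-time random walk from the origin with jump rate $\kappa$, and $\mathcal{B}$ constrains $X'$ so the concatenated walk lies in $\mathcal{A}_{n+1}$. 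Splitting $B^x_t$ for $t\in[n,n+1]$ via the Volterra representation into a past-measurable piece $\widetilde{B}^x$ and an innovation $N^x_t:=\int_n^{t}K_H(t,s)\,\mathrm{d}W^x_s$, the stochastic Fubini used in Section~\ref{Super-additivity} identifies the memory part as $\widetilde{B}^x_{n+1}-\widetilde{B}^x_n = c_H\int_n^{n+1}Y_n^x(u)\,\mathrm{d}u$.

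Two Gaussian estimates then close the argument. For the memory contribution, the field $\{Y_n^x(u):|x|\leq\mathfrak{N}_{n+1},\ u\in[n,n+1]\}$ corresponds exactly to the $k=1$ instance of Proposition~\ref{Holder continuity of Y for H smaller than half}, so it has $\mathcal{O}(1)$ variance and $\mathcal{O}(1)$ Lipschitz modulus in $u$; Dudley's Theorem~\ref{Dudley theorem} therefore gives $\mathbb{E}\sup_{x,u}Y_n^x(u)\leq C\sqrt{\log(n+1)}$. For the innovation contribution, the fields $\{N^x\}_{x\in\mathds{Z}^d}$ are independent across $x$ and their variances on $[n,n+1]$ are $\mathcal{O}(1)$, while for $H>1/2$ the positive correlation of fBm increments (Property~\ref{positive or negative correlation}) implies that the conditional variance of $\int_n^{n+1}\mathrm{d}N^{X'(s)+Y}_s$ given $X'$ is at most the stay-put value, hence also $\mathcal{O}(1)$; a single application of Jensen therefore bounds the expected $\log$ of the inner partition function by a constant depending only on $\kappa$ and $H$. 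Taking $\mathbb{E}\log$ of the displayed inequality and pushing $\log$ inside $\bar{\mathbb{E}}^Y$ by Jensen (as in the super-additivity lemma) assembles the per-step bound.

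The main obstacle is to pay only a $\sqrt{\log n}$ price at each step, rather than the $n^H$ price that a naive pointwise bound on $\widetilde{B}^x_{n+1}-\widetilde{B}^x_n$ would produce. The remedy is to integrate $Y_n^x(u)$ in $u$ \emph{before} taking the supremum in $x$, so that Dudley's theorem is applied to a Gaussian family whose variance and Lipschitz modulus are uniformly $\mathcal{O}(1)$ and one pays only a logarithmic toll for the $\mathcal{O}(\mathfrak{N}_{n+1}^d)$ reachable sites. Proposition~\ref{Holder continuity of Y for H smaller than half} of Section~\ref{Lipschitz continuity of residues of fBm increments} was designed precisely for this use, so this step reduces to inputs already built in the paper.
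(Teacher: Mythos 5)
Your plan shares the paper's basic skeleton (split memory from innovation, use Proposition~\ref{Holder continuity of Y for H smaller than half} plus Dudley on the memory, use positive correlation plus Jensen on the innovation), but the per-step telescoping you propose does not close, for three distinct reasons.

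First, the claimed recursion $\widehat{u}(n+1)\leq \widehat{u}(n)\,\mathrm{P}^X(\mathcal{C})^{-1}\,\bar{\mathbb{E}}^Y(\cdots)$ is not available. In the super-additivity lemma the buffer event $\mathcal{C}$ is \emph{inserted} as an indicator, which only ever \emph{decreases} the inner $\mathbb{E}^X$; this produces a lower bound on $\widehat{u}(n+m+1)$, never an upper bound on $\widehat{u}(n+1)$. Dividing by $\mathrm{P}^X(\mathcal{C})$ does not reverse the inequality. Moreover $\mathcal{A}_{n+1}$ (at most $\mathfrak{N}_{n+1}$ jumps on $[0,n+1]$) neither contains nor is contained in $\mathcal{A}_n\cap\{\text{at most }\mathfrak{N}_1\text{ jumps on }[n,n+1]\}$, so the event does not factor. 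You would have to drop to the unrestricted $u(n)$ (and then invoke the $\mathcal{O}(e^{-n})$ gap of Section~\ref{Approximation}), which is fixable, but the more serious problems are below.

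Second, ``pushing $\log$ inside $\bar{\mathbb{E}}^Y$ by Jensen'' goes in the wrong direction for an upper bound. Since $\log$ is concave, $\log\bar{\mathbb{E}}^Y[Z]\geq\bar{\mathbb{E}}^Y[\log Z]$; this is precisely why the super-additivity lemma, which wants a \emph{lower} bound, can use it, and why you cannot. To bound $\mathbb{E}\log\bar{\mathbb{E}}^Y[\cdot]$ from \emph{above} you would have to replace $\bar{\mathbb{E}}^Y$ by a supremum over $Y$ in the admissible range before taking $\log$. That is in fact what the paper's proof does implicitly: it extracts the memory integrands $Y_l^{X(t)}(t)$ and $\widehat{Y}_l^{X(t)}(t)$ as a pointwise $\sup_{|x|\leq n^2,\, u\in[l,l+1]}$ \emph{before} applying $\log\mathbb{E}^X$, and then uses Jensen only once at the outermost level in the harmless direction $\mathbb{E}\log\leq\log\mathbb{E}$.

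Third, and most critically, the identification ``$\{Y_n^x(u):u\in[n,n+1]\}$ is the $k=1$ instance of Proposition~\ref{Holder continuity of Y for H smaller than half}'' is wrong: with $u\in[n,n+1]$ this is $k=0$, which is excluded (the proposition requires $k\geq 1$), and the exclusion is not cosmetic. Since $Y_n(u)=\int_0^n(u-s)^{H-3/2}(u/s)^{H-1/2}\,\mathrm{d}W_s$, its variance behaves like $\int_0^n(u-s)^{2H-3}\,\mathrm{d}s\asymp(u-n)^{2H-2}$, which diverges as $u\to n^+$ for every $H\in(0,1)$. Dudley's bound therefore does \emph{not} yield an $\mathcal{O}(\sqrt{\log n})$ price here. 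The paper avoids exactly this singularity by widening the innovation window to $[l-1,l+1]$: the residue $Y_l(u)$ integrates the Wiener process only up to $l-1$, so on $u\in[l,l+1]$ one is at $k\geq 1$ and the variance is $\mathcal{O}(1)$. But widening the window makes the innovations across adjacent unit intervals overlap, so they are only $2$-dependent rather than independent. Handling this forces the paper to introduce the auxiliary Brownian motions $\widetilde{W}^{l,x}$, replace the true environment on $[0,l-1]$ by an independent copy, collect the correction as the extra $\widehat{Y}_l$ terms, and use $\var\sum\leq 3\sum\var$ for the $2$-dependent innovation sum. None of this appears in your proposal, and without it the telescoping recursion you want does not decouple step $n$ from step $n-1$.

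In short: the per-step recursion is conceptually close to the paper's unit-interval decomposition, but it cannot be telescoped via the polymer endpoint measure and the Section~\ref{Super-additivity} machinery, because Jensen points the wrong way and because the necessary buffer makes consecutive innovations dependent. The paper's global decomposition \eqref{final decomposition of the integral with respect to fractionals}, with the auxiliary $\widehat{B}^{l,x}$ and a pointwise-sup extraction of the memory, is precisely what circumvents both obstacles.
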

\begin{proof}
We chop up the interval $[0,n]$ into subintervals $\{[l,l+1]\}_{l=0}^{n-1}$ and decompose each integral $\int_l^{l+1} \mathrm{d}B_s^{X(s)}$ into two parts: the residue part that comes from the Brownian motions contributions up to time $l-1$, and the innovation part that comes from the Brownian motions contributions from the interval $[l-1, l+1]$. We expect the innovation part to be the main contribution to the integral, and the residue part to be reasonably small.

We begin by the Volterra representation \eqref{Volterra representation} of a fBm. For $l\in \mathds{N}^{\geq 2}$ and $l\leq t_1 < t_2 \leq l+1$, we have
\begin{equation}\label{Fractional Increment Decomposition}
B_{t_2}-B_{t_1}=\int_0^{l-1}\Bigl(K_H(t_2,s)-K_H(t_1,s)\Bigr)\mathrm{d}W_s + Z_{t_2}-Z_{t_1}\,,
\end{equation}
where
\begin{equation}\label{Definition of Z}
Z_t:=\int_{l-1}^{t} K_H(t,s)\mathrm{d}W_s\,.
\end{equation}
For $0 \leq t \leq 2$  we also define $Z_t$ by
\begin{equation}\label{Definition of Z for t smaller than 2}
Z_t:=\int_{0}^{t} K_H(t,s)\mathrm{d}W_s\,.
\end{equation}
Applying the stochastic Fubini theorem \cite{Protter} to the first right-hand-side term of \eqref{Fractional Increment Decomposition} we have
\begin{equation}\label{Integral of Y}
\begin{aligned}
\int_0^{l-1}\Bigl(K_H(t_2,s)-K_H(t_1,s)\Bigr)\mathrm{d}W_s &=c_H\int_0^{l-1}\int_{t_1}^{t_2} (u-s)^{H-\frac{3}{2}}(\frac{u}{s})^{H-\frac{1}{2}} \, \mathrm{d}u\; \mathrm{d}W_s\\
&=\int_{t_1}^{t_2}  Y_l(u)\; \mathrm{d}u\,,
\end{aligned}
\end{equation}
where
\begin{equation}\label{definition of Y}
Y_l(u):=c_H\int_0^{l-1} (u-s)^{H-\frac{3}{2}}(\frac{u}{s})^{H-\frac{1}{2}} \, \mathrm{d}W_s\,.
\end{equation}

Applying this procedure to the family $\{B^x\}_{x\in \mathds{Z}^d}$, there exists a family of independent standard Brownian motions $\{W^x\}_{x\in \mathds{Z}^d}$ such that
$$
B^x_t=\int_0^t K_H(t,s) \, \mathrm{d}W^x_s\,.
$$
So for each site $x \in \mathds{Z}^d$, the processes $Y_l^x$ and $Z^x$ can be defined as above.

Back to the integral $\int_0^{n} \mathrm{d}B_s^{X(s)}$, it can be easily verified that
\begin{equation}\label{Integral of B decomposed to Z and Y}
\int_0^n \mathrm{d}B^{X(t)}_t=\int_0^{n} \mathrm{d}Z^{X(t)}_t + \sum_{l=2}^{n-1}\int_l^{l+1} Y_l^{X(t)}(t) \mathrm{d}t\,.
\end{equation}
We will show that the first term grows linearly in $n$ and the second term grows no faster than $n\sqrt{\log n}$.

For $\int_0^{n} \mathrm{d}Z^{X(t)}_t$, the idea is that by adjoining some terms to it we may turn it into a summation of mostly independent terms and hence getting a linear upper bound.
Indeed, let $\{\widetilde{W}^{l,x}\}_{x \in \mathds{Z}^d ,\, l\in \mathds{N}}$ be a family of independent standard Brownian motions, independent of any process introduced so far, in particular independent of the random walk $X(.)$, the fractional Brownian motions $\{B^x\}_{x \in \mathds{Z}^d}$ and hence their corresponding Brownian motions $\{W^x\}_{x \in \mathds{Z}^d}$ appearing in their integral representation. For any $l\in \mathds{N}^{\geq 2}$ and $x \in \mathds{Z}^d$ define $\widehat{W}^{l,x}$ as
$$
\widehat{W}^{l,x}:=
\begin{cases}
\widetilde{W}^{l,x}_s  &\text{for } \qquad s\in[0,l-1]\\
W^x_s- W^x_{l-1} + \widetilde{W}^{l,x}_{l-1}   &\text{for }\qquad s\in(l-1, \infty)\,.
\end{cases}
$$
and for $l=0, 1$, define $\widehat{W}^{l,x}:=W^x$.

It is easily verified that $\widehat{W}^{l,x}$ is itself a standard Brownian motion and hence the following expression
\begin{equation}\label{Definition of artificial B}
\widehat{B}^{l,x}_t:= \int_0^t K_H(t,s) \mathrm{d}\widehat{W}^{l,x}_s=
\int_0^{l-1} K_H(t,s)\mathrm{d}\widetilde{W}^{l,x}_s
+ \int_{l-1}^t K_H(t,s)\mathrm{d}W^x_s\,,
\end{equation}
is a fractional Brownian motion of Hurst parameter $H$.\\
Note also that for any $x \in\mathds{Z}^d$ and $l \leq t < l+1$, we have
$$
Z^x_t=\int_{l-1}^t K_H(t,s)\mathrm{d}W^x_s\,.
$$

By the same procedure as in equations \eqref{Fractional Increment Decomposition} through \eqref{definition of Y}, for any $t \in [l,l+1)$ we have
$$
\int_l^{l+1} \mathrm{d}Z^{X(t)}_t=\int_l^{l+1} \mathrm{d}\widehat{B}^{l, X(t)}_t -\int_l^{l+1}\widehat{Y}_l^{X(t)}(t) \mathrm{d}t\,,
$$
where
$$
\widehat{Y}_l^{x}(t):=c_H\int_0^{l-1} (u-s)^{H-\frac{3}{2}}(\frac{u}{s})^{H-\frac{1}{2}} \, \mathrm{d}\widetilde{W}^{l,x}_s \quad \text{for}\quad t \in [l,l+1)\,.
$$
We therefore have
$$
\int_0^{n} \mathrm{d}Z^{X(t)}_t=\sum_{l=0}^{n-1}\int_l^{l+1}\mathrm{d}\widehat{B}^{l, X(t)}_t-\sum_{l=2}^{n-1}\int_l^{l+1}\widehat{Y}_l^{X(t)}(t) \mathrm{d}t\,.
$$
This along with \eqref{Integral of B decomposed to Z and Y} implies that
\begin{equation}\label{final decomposition of the integral with respect to fractionals}
\int_0^{n}\mathrm{d} B^{X(t)}_t=
\sum_{l=0}^{n-1}\int_l^{l+1}\mathrm{d}\widehat{B}^{l, X(t)}_t-\sum_{l=2}^{n-1}\int_l^{l+1}\widehat{Y}_l^{X(t)}(t) \mathrm{d}t+\sum_{l=2}^{n-1}\int_l^{l+1}Y_l^{X(t)}(t) \mathrm{d}t\,.
\end{equation}
So we have
\begin{equation}\label{upper bound on U hat for H larger than half}
\begin{aligned}
\widehat{U}(n)
&=\mathbb{E} \log \mathbb{E}^X \Bigl(e^{\int_0^{n}\mathrm{d} B^{X(t)}_t}\mathbf{1}_{\mathcal{A}_n}\Bigr)\\
&\leq \mathbb{E} \log \mathbb{E}^X e^{\sum_{l=0}^{n-1}\int_l^{l+1}\mathrm{d}\widehat{B}^{l, X(t)}_t}+\sum_{l=2}^{n-1} \mathbb{E} \Bigl(\sup_{\substack{|x|\leq n^2\\ \;l \leq u \leq l+1}}|\widehat{Y}_l^{x}(u)|+\sup_{\substack{|x|\leq n^2\\ \;l \leq u \leq l+1}}|Y_l^{x}(u)|\Bigr)
\end{aligned}
\end{equation}\,.

First we find an upper bound on the first right-hand-side term.
Here we need an easy observation. Let $\widetilde{\sigma}^{l}$ be the sigma field generated by $\{\widetilde{W}^{l,x}_s \;;\; s \in [0,l-1] \,,\, x \in \mathds{Z}^d\}$ and $\sigma^{l}$ be the sigma field generated by $\{{W^{x}_s- W^{x}_{l-1}}\;;\; s \in (l-1,l+1]\,,\, x \in \mathds{Z}^d\}$. It is evident by \eqref{Definition of artificial B} that for any $l \leq t < l+1$ the process $\widehat{B}^{l,x}_t$ is measurable with respect to $\sigma^{l} \vee \widetilde{\sigma}^{l}$ where $\vee$ denotes the smallest sigma field containing the both. So $\int_l^{l+1} \mathrm{d}\widehat{B}^{l, X(t)}_t$ is also measurable with respect to $\sigma^{l}\vee \widetilde{\sigma}^{l}$.  As $\sigma^{l} \vee \widetilde{\sigma}^{l}$ and $\sigma^{k} \vee \widetilde{\sigma}^{k}$ are independent for $|k-l|\geq 2$, this shows that $\int_l^{l+1} \mathrm{d}\widehat{B}^{l, X(t)}_t$ and $\int_k^{k+1} \mathrm{d}\widehat{B}^{l, X(t)}_t$ are independent for $|k-l|\geq 2$. Hence, using the inequality $\mathbb{E}XY\leq \frac{1}{2}(\mathbb{E}X^2+\mathbb{E}Y^2)$, we have
$$
\var \sum_{l=0}^{n-1} \int_l^{l+1} \mathrm{d}\widehat{B}^{l, X(t)}_t  \leq 3\sum_{l=0}^{n-1} \var \int_l^{l+1} \mathrm{d}\widehat{B}^{l, X(t)}_t\,.
$$
We also notice that
\begin{equation}\label{varaiance of the integral over unit interval is bounded by one}
\var \int_l^{l+1} \mathrm{d}\widehat{B}^{l, X(t)}_t \leq 1\,,
\end{equation}
which follows from the fact that the upper bound is attained when the random walk stays in a single site for the whole time interval $[l,l+1)$ as the increments of the fBm are positively correlated (property \ref{positive or negative correlation}).

Hence we have
\begin{equation}\label{linear upper bound for the first term of the integral wrt fractionals}
\begin{aligned}
\mathbb{E}\Bigl(e^{\sum_{l=0}^{n-1}\int_l^{l+1} \mathrm{d}\widehat{B}^{l, X(t)}_t} \Bigr)
& =e^{\frac{1}{2} \var \sum_{l=0}^{n-1} \int_l^{l+1}\mathrm{d}\widehat{B}^{l, X(t)}_t}\\
&\leq e^{\frac{3}{2} \sum_{l=0}^{n-1} \var \int_l^{l+1}\mathrm{d}\widehat{B}^{l, X(t)}_t}\\
&\leq e^{\frac{3}{2}n}\,,
\end{aligned}
\end{equation}

Now we turn to the second right-hand-side term of Equation \eqref{upper bound on U hat for H larger than half}.\\
Applying Dudley's theorem, for any $l \in \mathds{N}^{\geq 2}$ we have
$$
\mathbb{E}\Bigl(\sup_{\substack{|x|\leq n^2\\ \;l \leq u \leq l+1}}|Y_l^{x}(u)|\,\Bigr)
\leq
K\int_0^\infty \sqrt{\log N(\varepsilon)}\, \mathrm{d}\varepsilon\,,
$$
where $K$ is a universal constant.\\
Using proposition \ref{Holder continuity of Y for H smaller than half}, for any $u, v \in [l,l+1]$ we have
$$
\mathbb{E}\bigl[Y_{l}(u)-Y_{l}(v)\bigr]^2 \curlyeqprec (u-v)^2.
$$
Particularly the upper bound doesn't depend on $l$.\\
So with the same argument given in Section \ref{Super-additivity}, it follows that there are positive numbers $M_1$ and $M_2$ depending only on $H$, such that $N(\varepsilon)\curlyeqprec \frac{1}{\varepsilon}$ for $0 < \varepsilon \leq M_1$, $N(\varepsilon)\asymp n^{2d}$ for $M_1 \leq \varepsilon < M_2$ and finally $N(\varepsilon)=1$ for $\varepsilon > M_2$ and. So there exists a positive constant $K_1$ such that for every $l$
$$
\mathbb{E}\Bigl(\sup_{\substack{|x|\leq n^2\\ \;l \leq u \leq l+1}}|Y_l^{x}(u)|\,\Bigr)
\leq K_1 \sqrt{\log n}.
$$
The same is true for $\widehat{Y}_l^{x}$
$$
\mathbb{E}\Bigl(\sup_{\substack{|x|\leq n^2\\ \;l \leq u \leq l+1}}|\widehat{Y}_l^{x}(u)|\,\Bigr)
\leq K_2 \sqrt{\log n}.
$$

Hence we have
$$
\widehat{U}(n)\leq 3/2n+K n\sqrt{\log n},
$$
where $K$ is a positive constant that doesn't depend on anything other than $H$.
\end{proof}
\section{Compact-Space Setup}\label{Compact Case}
In this section, we consider the compact-space model studied in \cite{Viens}. It turns out that our method to obtain an upper bound on $U(t)$ for the case of independent sites of $\mathds{Z}^d$, can be modified to give a much stronger upper bound in the compact set-up. Indeed we show that in the compact setup, $U(t)$ grows linearly in $t$ and hence $u(t)$ grows exponentially in $t$. This is in contrast with \cite{Viens} where its authors tried to show that $U(t)$ grows at least as fast as $\frac{t^{2H}}{\log t}$.
We identified the passage from (41) to (42) in Section 6.2 of \cite{Viens} as a probable source of the discrepancy with our article, which, when combined with other delicate arguments in \cite{Viens}, lead them to obtain an incorrect lower bound when $H>1/2$. In particular, the passage from (41) to (42) in \cite{Viens}, which is detailed at the bottom of the page where those equations appear therein, seems to be justified by invoking spatial homogeneity of the potential $W$, when in reality the authors of \cite{Viens} should have investigated the distribution of the potential $W$ conditional on the past and current values of the maximized path. In particular, if the maximized path's increment from time step $k$ to time step $j$ in their argument happens to be large, then for a $W$ which has a strong decay of spatial correlation, a modification of their argument can probably work. But we believe that when this increment is small, the argument is incorrect, and leads to a bias in the quantitative estimates of the lower bound later in \cite{Viens}. The fact that this discrepancy occurs for $H>1/2$ means that this bias is likely to manifest itself in a positive way because of the positivity of the increments of fractional Brownian motion in that case; this is indeed what appears to happen,  as the lower bound in Theorem 6.7 in \cite{Viens} is larger than it should be.

Replacing $\mathds{Z}^d$ by $\mathcal{S}^1$ (unit circle) is equivalent to considering the model on $\mathds{R}$ with a $2\pi$-periodic covariance function, i.e. a positive semi-definite function $Q:\mathds{R}\times\mathds{R}\rightarrow \mathds{R}$ such that
$$
Q(x+2\pi, y)=Q(x,y)=Q(x, y+2\pi)\quad \text{for every }x,y\in \mathds{R}\,.
$$
We additionally assume that $Q(x,y)$ has a positive order H\"older continuity. In other words there exist positive constants $C$ and $\alpha$ such that
\begin{equation}\label{Holder continuity condition of the covariance}
|Q(x,y)-\frac{1}{2}Q(x,x)-\frac{1}{2}Q(y,y)|\leq C |x-y|^\alpha \quad \text{for every }x,y\in \mathds{R}\,.
\end{equation}
Now we consider a family of fractional Brownian motions $\{B^x\}_{x \in \mathds{R}}$ with the following space covariance structure
$$
\mathbb{E}\bigl(B_t^x B_s^y\bigr)=R_H(t,s) \, Q(x,y)\,,
$$
where $R_H(t,s)$ is as in \eqref{fractional covariance function}, i.e.
$$
R_H(t,s):=\frac{1}{2}(|t|^{2H}+|s|^{2H}-|t-s|^{2H})\,.
$$
Defining $\{W^x\}_{x\in \mathds{R}}$ as a family of standard Brownian motions with space covariance structure $Q(\cdot,\cdot)$, i.e. $\mathbb{E}(W_t^{x}W_s^{y})=\min(s,t)\, Q(x,y)$, we can easily verify that all the arguments in the proof of theorem \ref{theorem on the upperbound for U} up to Equation \eqref{Integral of B decomposed to Z and Y} hold true in this new setting as well. Then we define $\{\widetilde{W}^{l,x}\}_{x \in \mathds{Z}^d ,\, l\in \mathds{N}}$ as a family of standard Brownian motions independent of all the other random processes involved, with the following (space) covariance structure:
$$
\mathbb{E}\bigl(\widetilde{W}_t^{l,x}\widetilde{W}_s^{k,y}\bigr)=\min(s,t)\, \delta_k(l) \, Q(x,y)\,.
$$
So in particular $\widetilde{W}^{l,x}$'s are independent for different $l$'s, but are correlated for different $x$'s.

We can easily show that with this definition everything goes well until Equation \eqref{upper bound on U hat for H larger than half}. There we have
\begin{multline}\label{upper bound on U for H larger than half in the compact setting}
U(n)=\mathbb{E} \log \mathbb{E}^{X} \Bigl(e^{\int_0^{n}\mathrm{d} B^{X(t)}_t}\Bigr) \leq  \mathbb{E} \log \mathbb{E}^X e^{\sum_{l=0}^{n-1}\int_l^{l+1}\mathrm{d}\widehat{B}^{l, X(t)}_t}\\
+\sum_{l=2}^{n-1} \mathbb{E} \Bigl(\sup_{\substack{x \in \mathds{R}\\ \;l \leq u \leq l+1}}|\widehat{Y}_l^{x}(u)|+\sup_{\substack{x \in \mathds{R}\\ \;l \leq u \leq l+1}}|Y_l^{x}(u)|\Bigr)\,.
\end{multline}

We can easily verify that the arguments following Equation \eqref{upper bound on U hat for H larger than half} up to Equation \eqref{varaiance of the integral over unit interval is bounded by one} are still valid. Using the fact that the increments of a fBm with a Hurst parameter larger than half are positively correlated (property \ref{positive or negative correlation}) we have
$$
\var \int_l^{l+1} \mathrm{d}\widehat{B}^{l, X(t)}_t \leq M\,,
$$
where $M:=\sup_{x,y\in \mathds{R}}Q(x,y)$. Note that the finiteness of $M$ is guaranteed by Condition \eqref{Holder continuity condition of the covariance}. So we have
$$
\mathbb{E} e^{\sum_{l=0}^{n-1}\int_l^{l+1}\mathrm{d}\widehat{B}^{l, X(t)}_t}\leq e^{\frac{3}{2}M n}\,,
$$
which establishes a linear upper bound on the first term of the right-hand-side of Equation \eqref{upper bound on U for H larger than half in the compact setting}.

Turning to the second right-hand-side term of Equation \eqref{upper bound on U for H larger than half in the compact setting}, we show that basically the same arguments provide us with a linear upper bound instead of $K\,n\sqrt{\log n}$ which was the best upper bound we managed to get in the non-compact setting.\\
Indeed, let $W$ be a standard Brownian motion independent of any process already defined and let $Y_l(\cdot)$ be related to $W$ as in Equation \eqref{definition of Y}. Using the polarization identity we have $[{W}^{l,x}, {W}^{l,y}]_t=t\,Q(x,y)$ where $[{W}^{l,x}, {W}^{l,y}]_t$ denotes the covariation of the two Brownian motions \cite{Protter}. So using It\={o}'s isometry \cite{Protter} we have
$$
\mathbb{E}\bigl( Y_l^{x}(u) Y_l^{y}(u)\bigr)=Q(x,y)\,\mathbb{E}\bigl( Y_l(u) Y_l(u)\bigr)\,.
$$
Applying proposition \ref{Holder continuity of Y for H smaller than half} and Condition \eqref{Holder continuity condition of the covariance}, for any $u, v \in [l,l+1]$ we have
$$
\mathbb{E}\bigl(Y_l^{x}(u)-Y_{l}^y(v)\bigr)^2 \leq C \bigl(|u-v|^2+|x-y|^\alpha \bigr)\,,
$$
where $C$ only depends on $H$. In particular it does not depend on $l$, $u$, $v$, $x$, or $y$.

Let $\{x_i\}_{i=1}^m$ be the $m$ equally-distanced points in the interval $(-\pi, \pi)$ (or equivalently on $\mathcal{S}^1$), and $\{t_j\}_{j=1}^p$ be the $p$ equally-distanced points in the interval $(l, l+1)$. Then with the notation of \ref{Dudley theorem}, the set of $\varepsilon$-balls centered at $\{Y^{x_i}_{l}(t_j)\}_{i,j}$ cover the whole of $\{Y^{x}_{l}(t)\}_{\substack{x\in \mathds{R}\\ \;l \leq t \leq l+1}}$ provided that $\sqrt{C\bigl((\frac{\pi}{m})^\alpha+(\frac{1}{2p})^2\bigr)}<\varepsilon$. So the corresponding $N(\varepsilon)$ in Dudley's theorem is bounded by $C_1 \varepsilon^{-\gamma}$ for every $\varepsilon\leq \varepsilon_0$ where $C_1$, $\gamma$, and $\varepsilon_0$ are strictly positive constants which only depend on $H$ and $\alpha$. So by Dudley's theorem, there exists a positive constant $K'_1$ which depends only on $H$ and $\alpha$ such that for every $l$ we have
$$
\mathbb{E}\Bigl(\sup_{\substack{x\in \mathds{R}\\ \;l \leq u \leq l+1}}|Y_l^{x}(u)|\,\Bigr)
\leq K'_1\,.
$$
One can easily verify that the same is true for $\widehat{Y}_l^{x}$:
$$
\mathbb{E}\Bigl(\sup_{\substack{x\in \mathds{R}\\ \;l \leq u \leq l+1}}|\widehat{Y}_l^{x}(u)|\,\Bigr)
\leq K'_2.
$$
This establishes an $n$-linear upper bound on $U(n)$ as claimed.

\bibliographystyle{plain}


\def\polhk#1{\setbox0=\hbox{#1}{\ooalign{\hidewidth
  \lower1.5ex\hbox{`}\hidewidth\crcr\unhbox0}}} \def\cprime{$'$}

\end{document}